\newtheorem{thm}{Theorem}[section]
\newtheorem{lemma}[thm]{Lemma}
\newtheorem{coro}[thm]{Corollary}
\newtheorem{claim}{Claim}[thm]
\newtheorem{prop}[thm]{Proposition}
\newtheorem{conj}[thm]{Conjecture}
\newtheorem{observation}[thm]{Observation}
\theoremstyle{definition}
\newtheorem{defn}{Definition}
\begin{document}

\title{Structure in sparse $k$-critical graphs}
\author{Ron Gould$^{1}$ \and Victor Larsen$^{2}$ \and Luke Postle$^{3}$}

\maketitle
\footnotetext[1]{Department of Mathematics and Computer Science, Emory University, Atlanta, GA 30322; {\tt rg@mathcs.emory.edu}.}
\footnotetext[2]{Department of Mathematics, University of Sioux Falls, Sioux Falls, SD, 57105; {\tt victor.larsen@usiouxfalls.edu}.}
\footnotetext[3]{Department of Combinatorics and Optimization, University of Waterloo, Waterloo, ON, Canada, N2L 3G1; {\tt lpostle@uwaterloo.ca}.}

\begin{abstract}
Recently, Kostochka and Yancey \cite{kostochkayancey2014} proved that a conjecture of Ore is asymptotically true by showing that every $k$-critical graph satisfies 
$|E(G)|\geq\left\lceil\left(\frac{k}{2}-\frac{1}{k-1}\right)|V(G)|-\frac{k(k-3)}{2(k-1)}\right\rceil.$
They also characterized \cite{KosYankOre} the class of graphs that attain this bound and showed that it is equivalent to the set of $k$-Ore graphs.
We show that for any $k\geq33$ there exists an $\varepsilon>0$ so that if $G$ is a $k$-critical graph, then $|E(G)|\geq\left(\frac{k}{2}-\frac{1}{k-1}+\varepsilon_k\right)|V(G)|-\frac{k(k-3)}{2(k-1)}-(k-1)\varepsilon T(G)$,
where $T(G)$ is a measure of the number of disjoint $K_{k-1}$ and $K_{k-2}$ subgraphs in $G$.
This also proves for $k\geq33$ the following conjecture of Postle \cite{PostleArXiv} regarding the asymptotic density:
For every $k\geq4$ there exists an $\varepsilon_k>0$ such that if $G$ is a $k$-critical $K_{k-2}$-free graph, then $|E(G)|\geq \left(\frac{k}{2}-\frac{1}{k-1}+\varepsilon_k\right)|V(G)|-\frac{k(k-3)}{2(k-1)}$.
As a corollary, our result shows that the number of disjoint $K_{k-2}$ subgraphs in a $k$-Ore graph scales linearly with the number of vertices and, further, that the same is true for graphs whose number of edges is close to Kostochka and Yancey's bound.
\end{abstract}

\section{Introduction}

Given a graph $G$ the \emph{chromatic number} of $G$, denoted $\chi(G)$, is the smallest integer $k$ such that there exists a mapping $\phi:V(G)\rightarrow\{1,\ldots,k\}$ where $\phi(u)\neq\phi(v)$ whenever $uv\in E(G)$. 
Such a mapping is called a \emph{proper $k$-coloring} of $G$.  
We say that $G$ is $k$-colorable if $G$ has a proper $k$-coloring.
There is an obvious connection between the number of edges in a graph and the graph's chromatic number.
Each edge is a restriction on the vertex labeling, and thus removing edges can lower the chromatic number of a graph.
Indeed, the chromatic number of $G-e$ is either $\chi(G)$ or $\chi(G)-1$.
It is natural to study the class of graphs which are as sparse as possible for a given chromatic number.

A graph $G$ is \emph{$k$-critical} if $\chi(G)=k$ and every proper subgraph is $(k-1)$-colorable.
Viewing $k$-critical graphs as minimal graphs with chromatic number $k$ leads to the question of how small such graphs can be.
Let $f_k(n)$ denote the minimum number of edges in a $k$-critical graph, Ore conjectured \cite{Ore} the following.

\begin{conj}[Ore 1967 \cite{Ore}]
If $k\geq4$, then
\[f_k(n+k-1)=f_k(n)+(k-1)\left(\frac{k}{2}-\frac{1}{k-1}\right).\]
\end{conj}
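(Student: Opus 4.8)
The statement splits into two inequalities. The upper bound $f_k(n+k-1)\le f_k(n)+(k-1)\left(\frac k2-\frac1{k-1}\right)$ is the constructive (and, I expect, easy) half: take a $k$-critical graph $H$ on $n$ vertices with exactly $f_k(n)$ edges and form the Haj\'os-type (DHGO) composition of $H$ with $K_k$ along an edge of $H$. This composition is again $k$-critical, it adds $k-1$ new vertices, and since $(k-1)\left(\frac k2-\frac1{k-1}\right)=\binom k2-1$ it adds exactly $\binom k2-1$ edges, giving $f_k(n+k-1)\le f_k(n)+\binom k2-1$. The substance is the reverse inequality $f_k(n+k-1)\ge f_k(n)+\binom k2-1$.

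For the lower bound I would use the potential method of Kostochka and Yancey. Put $\rho_k(G)=(k+1)(k-2)|V(G)|-2(k-1)|E(G)|$, so that their density bound is exactly the statement $\rho_k(G)\le k(k-3)$ for every $k$-critical $G$, accompanied by a matching lower bound on $\rho_k$ over proper induced subgraphs whose equality cases are precisely the $k$-Ore graphs. The recurrence would then follow from an induction on $|V(G)|$ applied to a sparsest $k$-critical graph $G$: either $G$ contains a reducible configuration — a vertex of degree $k-1$, a $(k-1)$- or $(k-2)$-clique cut, or a DHGO-decomposition — that can be stripped off while removing at least $k-1$ vertices and at least $\binom k2-1$ edges, reducing to a smaller case; or $G$ has no such configuration, in which case $G$ is far from being $k$-Ore and I must exhibit a genuine density surplus $|E(G)|\ge\left(\frac k2-\frac1{k-1}+\varepsilon_k\right)|V(G)|-O(1)$. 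That surplus for clique-poor graphs is exactly the main theorem of this paper in the case that $G$ is $K_{k-2}$-free, and the term $(k-1)\varepsilon T(G)$ is the bookkeeping that lets one interpolate between the clique-rich extremal graphs (where the surplus is absent but cliques can be peeled off) and the clique-poor regime (where the surplus is present).

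The crux — and the reason the full conjecture remains open for small $k$ — is establishing the density surplus for $k$-critical graphs with no $K_{k-2}$, i.e.\ quantifying how much potential must be lost once the $k$-Ore structure is forbidden. I would attack this with a discharging argument on a minimal counterexample: assume $\rho_k(G)$ is within $\varepsilon_k|V(G)|$ of $k(k-3)$, use the Kostochka--Yancey subgraph inequality to force every low-potential set to be $k$-Ore-like and hence to contain many copies of $K_{k-1}$, and then derive from $K_{k-2}$-freeness either a direct contradiction or an explicit $\Omega(|V(G)|)$ loss of potential. The delicate points I anticipate are (i) the case analysis around degree-$(k-1)$ vertices and around $(k-2)$- and $(k-1)$-clique cuts, where the DHGO composition must be reversed cleanly; (ii) making $\varepsilon_k$ strictly positive, which is where the hypothesis $k\ge 33$ should enter and where a soft counting bound will not suffice; and (iii) converting the surplus statement back into the exact recurrence, which requires verifying the base cases $n=k,\dots,2k-2$ and checking that no step of the induction ever removes more than $\binom k2-1$ edges per $k-1$ vertices deleted.
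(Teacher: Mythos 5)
The first thing to note is that the statement you were asked about is Ore's Conjecture: the paper states it only as a conjecture (and motivation), gives no proof, and no proof is known — the exact recurrence $f_k(n+k-1)=f_k(n)+\binom{k}{2}-1$ is open for every $k\ge 4$. So there is no proof in the paper to compare against, and your text should be judged as a proposed resolution of an open problem. Your upper-bound half is fine and standard: an Ore (DHGO) composition of an extremal $k$-critical $H$ with $K_k$ is $k$-critical, adds $k-1$ vertices and exactly $\binom{k}{2}-1$ edges, giving $f_k(n+k-1)\le f_k(n)+\binom{k}{2}-1$.

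The lower-bound half, however, is not a proof but a program, and it has two concrete gaps. First, the dichotomy you rely on — either a sparsest $k$-critical $G$ contains a configuration (degree-$(k-1)$ vertex, small clique cut, reversible Ore/DHGO decomposition) whose removal deletes $k-1$ vertices and at least $\binom{k}{2}-1$ edges while leaving a $k$-critical graph, or else $G$ enjoys an $\varepsilon$-density surplus — is exactly the missing ingredient; neither Kostochka--Yancey nor this paper provides such a peeling step, and "reversing" an Ore composition inside an arbitrary low-potential critical graph is precisely where all known attempts stall. Second, and more structurally: even if you fully carried out the surplus part (this paper's Theorem 1.8, which in any case requires $k\ge 33$ and tracks the surplus through $T(G)$ rather than $K_{k-2}$-freeness alone), a bound of the form $|E(G)|\ge c_k|V(G)|-O(1)$ can never deliver the exact recurrence for all $n$: Kostochka--Yancey's bound matches Ore's conjectured value of $f_k(n)$ only when $n\equiv 1\pmod{k-1}$, and for the other residues $f_k(n)$ exceeds the ceiling of any linear lower bound by an amount depending on $n\bmod (k-1)$ that no density inequality of this shape can detect. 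So your step (iii) ("converting the surplus statement back into the exact recurrence") is not a verification task but the heart of the open problem, and your remark that the conjecture "remains open for small $k$" misstates the situation — what is known for large $k$ (including the results of this paper) is the asymptotic/$\varepsilon$-improved density statement, not Ore's recurrence.
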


As $\delta(G)\geq k-1$ for any $k$-critical graph, it is clear that $f_k(n)\geq\frac{k-1}{2}n$.
Since Dirac's 1957 paper \cite{Dirac57}, there have been many improvements over the years to the bounds for $f_k(n)$ (\cite{Gallai}, \cite{KS99}, \cite{Krivelevich})
Recently, Kostochka and Yancey \cite{kostochkayancey2014} made an important breakthrough.

\begin{thm}[Kostochka and Yancey 2014 \cite{kostochkayancey2014}, Theorem 3]\label{thm:kCritLowerBound}
If $k\geq4$ and $G$ is $k$-critical, then 
\[|E(G)|\geq\left\lceil\left(\frac{k}{2}-\frac{1}{k-1}\right)|V(G)|-\frac{k(k-3)}{2(k-1)}\right\rceil.\]
\end{thm}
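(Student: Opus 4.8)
The natural approach is the \emph{potential method}. For a subgraph $H\subseteq G$ define the potential $\rho_k(H)=(k+1)(k-2)\,|V(H)|-2(k-1)\,|E(H)|$, and write $\rho_k(G)$ for $\rho_k$ of $G$ itself. Since $\frac{k}{2}-\frac{1}{k-1}=\frac{(k+1)(k-2)}{2(k-1)}$ and $|E(G)|$ is an integer, the stated inequality is equivalent to the clean bound $\rho_k(G)\le k(k-3)$ for every $k$-critical $G$. A one-line computation gives $\rho_k(K_k)=k(k-3)$, and one also checks that the Dirac--Hajós--Gallai--Ore composition (delete an edge of a $k$-critical $G_1$, split a vertex of a $k$-critical $G_2$, and glue) sends $\rho_k(G_1),\rho_k(G_2)$ to $\rho_k(G_1)+\rho_k(G_2)-k(k-3)$, so every graph assembled from copies of $K_k$ by such operations has potential exactly $k(k-3)$. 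Thus the coefficients above are precisely those under which the extremal constructions are potential-neutral while any genuinely sparser local structure strictly decreases $\rho_k$, and the proof should run by minimal counterexample together with a lemma forbidding sparse sub-configurations.

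So I would take $G$ to be $k$-critical with $\rho_k(G)\ge k(k-3)+1$ and $|V(G)|$ minimum. Then $G\ne K_k$; by $k$-criticality $\delta(G)\ge k-1$; $G$ is not complete (a complete $k$-critical graph is $K_k$), so $|V(G)|\ge k+2$ since the only $k$-critical graph on at most $k+1$ vertices is $K_k$; and by Gallai's theorem the subgraph induced by the \emph{low} vertices (those of degree exactly $k-1$) is a Gallai forest, and a low vertex exists. The heart of the argument is a \emph{subset lemma}: for every $R\subsetneq V(G)$ with $|R|\ge 2$, the potential $\rho_k(G[R])$ exceeds the global target $k(k-3)$ by a positive amount on the order of $k$ (one expects the threshold to be governed by configurations like ``$K_k$ minus an edge'', for which $\rho_k=(k+1)(k-2)=k(k-3)+2(k-1)$). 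To prove it, assume $R$ is a minimal violator and use $k$-criticality: deleting the edges inside $R$ leaves a $(k-1)$-colorable graph, and the potential deficit of $G[R]$ forces $G[R]$ to be sparse enough, relative to how $R$ attaches to the rest, that a $(k-1)$-coloring of $G-R$ extends across $R$ --- contradicting $\chi(G)=k$. Equivalently one \emph{collapses} $R$: replace it by a small gadget (a clique or Hajós-type gadget on an appropriate subset of its boundary) to obtain a $k$-chromatic graph $G'$ on fewer vertices, pass to a $k$-critical subgraph $G''\subseteq G'$, invoke minimality to bound $|E(G'')|$, and track the bounded change in $\rho_k$ caused by the surgery, using the assumed small value of $\rho_k(G[R])$ to conclude $\rho_k(G)\le k(k-3)$. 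The same accounting disposes of small vertex cuts, so $G$ may be assumed highly connected.

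With the subset lemma available, I would finish with a Kempe-chain/identification argument at a low vertex. Fix a low vertex $z$ with $N(z)=\{v_1,\dots,v_{k-1}\}$ and a $(k-1)$-coloring $\phi$ of $G-z$; since $\phi$ does not extend to $z$, the $v_i$ receive all $k-1$ colors, say $\phi(v_i)=i$. If for some $i\ne j$ the vertices $v_i,v_j$ lie in different components of the $\{i,j\}$-subgraph, a Kempe swap puts two neighbors of $z$ on the same color, a contradiction; hence every pair $v_i,v_j$ is Kempe-linked, which forces $N[z]$ to be clique-like. If $G[N(z)]=K_{k-1}$ then $N[z]$ induces $K_k$, forcing $G=K_k$, a contradiction; otherwise identify two non-adjacent $v_i,v_j$ (after any Kempe swaps needed to make the identification safe), obtaining a $k$-chromatic graph $G'$ on $|V(G)|-1$ vertices. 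Since identifying two vertices with $t$ common neighbors changes $\rho_k$ by exactly $2(k-1)t-(k+1)(k-2)$, and Kempe-linkage forces $t$ to be large (at least about $k/2$), minimality applied to a $k$-critical subgraph of $G'$ --- with the subset lemma controlling the passage to that subgraph --- yields $\rho_k(G)\le k(k-3)$, the final contradiction. The main obstacle I anticipate is exactly this subset/collapsing lemma and its bookkeeping: one needs the right gadget for each type of low-potential set, must verify that $k$-chromaticity is preserved, and must choose the threshold large enough to absorb the cost of the surgery yet small enough to remain true --- and it is the demand for the \emph{exact} constant $k(k-3)$, rather than an asymptotically correct one, that makes this delicate (the weaker classical bounds of Gallai, Krivelevich, and others arise from cruder versions of this lemma).
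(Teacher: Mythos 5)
This statement is quoted from Kostochka and Yancey; the paper does not prove it, but its own machinery (critical extensions in Section \ref{sec:Ext}, cloning in Section \ref{sec:Cloning}, and the final discharging) is precisely an $\varepsilon$-potential adaptation of their argument, so your sketch can fairly be measured against that template. Your setup is right: the potential $\rho_k(H)=(k+1)(k-2)|V(H)|-2(k-1)|E(H)|$, the equivalence of the edge bound with $\rho_k(G)\le k(k-3)$, the computation $\rho_k(K_k)=k(k-3)$, the fact that Ore compositions are potential-neutral, and the plan ``minimal counterexample plus a subset lemma proved by collapsing a $(k-1)$-colored proper subset onto a clique of color-class vertices and passing to a $k$-critical subgraph'' all match the actual proof (compare Definition \ref{def:colorReduce} and Lemma \ref{lem:PotentialUnderExtension} here).

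The genuine gap is in your endgame. You claim that for a low vertex $z$ the Kempe-linkage of every pair $v_i,v_j\in N(z)$ ``forces $N[z]$ to be clique-like'' and, when $v_i v_j\notin E(G)$, forces $v_i$ and $v_j$ to have on the order of $k/2$ common neighbors, so that identifying them changes $\rho_k$ by $2(k-1)t-(k+1)(k-2)$ with $t$ large. Kempe-linkage gives no such thing: it only produces a bichromatic path between $v_i$ and $v_j$, which may be long and avoid $N(z)$ entirely (this is exactly why the classical argument yields only the Gallai-tree structure of the low-vertex subgraph, not cliques), so the potential accounting for your identification step does not close, and with it the whole contradiction collapses. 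Kostochka and Yancey finish differently: they analyze \emph{clusters} of degree-$(k-1)$ vertices with identical closed neighborhoods, prove structural claims about them by cloning-type reductions (the analogue of Lemma \ref{lem:Clone} and Corollary \ref{cor:CloneName} here), and then run a global counting/discharging over all vertices rather than a single local identification. Relatedly, your subset lemma is only gestured at: the thresholds are not uniformly ``on the order of $k$'' independent of how $R$ meets the rest of the graph, and the delicate part --- choosing the collapsed gadget, handling the case where the resulting critical subgraph $W$ is again extremal ($k$-Ore-like), and controlling incompleteness of the extension --- is exactly the bookkeeping carried out in Lemmas \ref{lem:PotentialUnderExtension}--\ref{lem:EdgeAddition} of this paper and in Kostochka--Yancey's Claims; as written, your proposal assumes rather than supplies it. So the approach is the right one, but the proof as proposed would fail at the identification step and is incomplete at the subset lemma.
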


They also showed in \cite{KosYankOre} that the class of graphs which attain this bound are \emph{$k$-Ore graphs}, which are defined below. 
First, we begin with a definition used to construct $k$-Ore graphs.
\begin{defn}
An \emph{Ore composition} of two graphs $G_1$ and $G_2$ is a graph obtained by the following procedure:
(1) delete an edge $xy$ from $G_1$, (2) split some vertex $z$ of $G_2$ into two vertices $z_1$ and $z_2$ of positive degree, and (3) identify $x$ with $z_1$ and $y$ with $z_2$.
\end{defn}
Note that the Ore composition of two graphs is not unique, depending on which edge is deleted from $G_1$, which vertex $z$ of $G_2$ is split, and how the neighbors of $z$ are partitioned.
Indeed, even the order in which we list the graphs is important; when we say that $G$ is an Ore composition of $H$ and $F$ we mean that $G$ is one of the graphs obtained by an Ore composition where $H$ plays the role of $G_1$ (called the \emph{edge-side} of the composition) and $F$ plays the role of $G_2$ (called the \emph{split-side} of the composition). The identified vertices \ul{$xz_1$} and \ul{$yz_2$} are called the \emph{overlap vertices} of the composition. Further, we call the edge $xy$ from step (1) the \emph{replaced edge} of $H$ and call the vertex $z$ from step (2) the \emph{split vertex} of $F$.
\begin{defn}
A graph $G$ is a \emph{$k$-Ore graph} if it is in the smallest class of graphs containing $K_k$ which is closed under the Ore composition operation.
\end{defn}
Equivalently, this is the class of graphs obtainable by successive Ore compositions of either $K_k$ or other $k$-Ore graphs.

To prove Theorem \ref{thm:kCritLowerBound}, which shows that Ore's Conjecture is asymptotically true, Kostochka and Yancey established the following result on the density of a $k$-critical graph using a \emph{potential function},
\[\rho_{KY}(G):=(k-2)(k+1)|V(G)|-2(k-1)|E(G)|.\]
\begin{thm}[Kostochka and Yancey 2014 \cite{kostochkayancey2014}, Theorem 5]\label{thm:KY}
If $k\geq4$ and $G$ is $k$-critical, then $\rho_{KY}(G)\leq k(k-3).$
\end{thm}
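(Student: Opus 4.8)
The plan is to argue by strong induction on $|V(G)|$, equivalently via a minimal counterexample. The base case is $G=K_k$: since $K_k$ is $k$-critical, a direct computation gives $\rho_{KY}(K_k)=(k-2)(k+1)k-2(k-1)\binom{k}{2}=k(k-3)$, so the bound holds with equality. For the inductive step, let $G$ be $k$-critical with $n:=|V(G)|>k$, and suppose for contradiction that $\rho_{KY}(G)\ge k(k-3)+2$ (note $(k-2)(k+1)$ is always even, so $\rho_{KY}$ is even and $k(k-3)$ is even, hence a counterexample exceeds the bound by at least $2$); take such a $G$ with $n$ minimum. Two standard reductions are immediate: every proper subgraph of $G$ is $(k-1)$-colorable, so $G$ is $K_k$-free; and $G$ is vertex-critical, so $\delta(G)\ge k-1$.

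The engine is to apply the potential to induced subgraphs: for $R\subseteq V(G)$ set $\rho_G(R):=\rho_{KY}(G[R])$. I would first record the exact values $\rho_G(\{v\})=(k-2)(k+1)$, $\rho_{KY}(K_{k-1})=2(k-1)(k-2)$, and $\rho_{KY}(K_k)=k(k-3)$, and then establish the following key lemma: in a minimal counterexample, every proper subset $R$ with $|R|\ge 2$ has $\rho_G(R)$ bounded below by a suitable threshold, and moreover such an $R$ of near-minimum potential cannot induce a complete graph. The mechanism is a gluing (reverse-composition) move: if some proper $R$ with $|R|\ge2$ had potential below the threshold, then identifying a pair of vertices of $R$ with a split vertex of a $k$-critical gadget (e.g.\ a copy of $K_k$) in the style of an Ore composition would yield a $k$-critical graph on fewer vertices whose potential is still at least $k(k-3)+2$; checking that the new graph is $k$-critical uses that $G$ itself is $k$-critical, and this contradicts minimality. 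It may be necessary to strengthen the induction hypothesis slightly so that this gluing goes through cleanly.

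With proper subsets under control, the contradiction should come from a coloring-extension argument. Choose a vertex $v$ and a $(k-1)$-coloring $\phi$ of $G-v$; since $\phi$ does not extend to $v$, the set $N(v)$ sees all $k-1$ colors. One then builds up a $(k-1)$-coloring of $G$ by absorbing vertices one at a time (using Kempe swaps to free colors), where the lower bounds on $\rho_G(R)$ for proper subsets, together with the global surplus $\rho_{KY}(G)-k(k-3)\ge2$, guarantee that there is always enough ``budget'' to absorb the next vertex. Completing this process recolors all of $G$ with $k-1$ colors, contradicting $\chi(G)=k$.

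I expect the coloring-extension bookkeeping to be the main obstacle: deciding the order in which vertices are absorbed, proving that each absorption consumes only a controlled amount of potential, and disposing of the dense boundary configurations ($K_{k-1}$, $K_{k-2}$, and ``diamond''-like subgraphs) where the argument is tightest. The underlying difficulty is that the coefficients $(k-2)(k+1)$ and $2(k-1)$ are calibrated so that $K_k$ is the \emph{only} tight graph, so every inequality in the induction has essentially zero slack and must be proved with the $K_k$-freeness of $G$ kept firmly in hand.
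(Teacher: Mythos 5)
This statement is not proved in the paper at all: it is Kostochka and Yancey's Theorem~5, imported as Theorem~\ref{thm:KY}, so there is no internal proof to compare against and your attempt must stand on its own as a reconstruction of their argument. Judged that way, your outline has the right general shape (minimal counterexample, potential of vertex subsets, reductions to smaller $k$-critical graphs), but the two load-bearing steps are not just unproved --- as described, the first one points in the wrong direction. Your key lemma asks for \emph{lower} bounds on $\rho_G(R)$ for proper subsets $R$, and your mechanism is to glue a $K_k$ onto a hypothetical low-potential $R$ by a reverse Ore composition and obtain a smaller $k$-critical graph of potential at least $k(k-3)+2$. But an Ore-type composition changes the potential additively by $-(k^2-3k)$, so if $\rho_G(R)$ is \emph{small} the glued graph $H$ has potential roughly $\rho_G(R)$, i.e.\ also small; it is not a smaller counterexample, and no contradiction with minimality appears. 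Worse, applying the minimality bound $\rho_{KY}(W)\le k(k-3)$ to a $k$-critical subgraph $W$ of such a glued graph yields a lower bound on $|E(W)|$, hence an \emph{upper} bound on $\rho_G(R)$ --- the opposite of what your induction engine needs. (There is also the unaddressed issue that $G[R]$ is $(k-1)$-colorable, so the composition of $K_k$ with it is not automatically $k$-critical, and $|R|+k-2<|V(G)|$ need not hold.) The mechanism Kostochka and Yancey actually use --- and which this paper adapts in Section~\ref{sec:Ext} --- is different: color $G[R]$, identify the color classes into a clique (the $G_{R,\phi}$ construction), find a $k$-critical $W$ in the collapsed graph, and use the resulting ``critical extension'' to drag the potential of $R\cup(V(W)-X)$, and ultimately of $V(G)$ itself, downward; the contradiction comes from $\rho_{KY}(G)$ being forced below $k(k-3)$, not from manufacturing a smaller dense critical graph.

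The second gap is the endgame. ``Choose $v$, take a $(k-1)$-coloring of $G-v$, and absorb vertices one at a time by Kempe swaps, with the potential surplus as budget'' is a hope, not an argument: the potential is a global linear combination of $|V|$ and $|E|$ and does not decrement in any controlled way under individual recolorings, and you give no ordering, no invariant, and no reason the dense boundary configurations you yourself flag ($K_{k-1}$, $K_{k-2}$, diamonds) can be absorbed. In the genuine proof this phase is replaced by structural lemmas about vertices of degree $k-1$ (clusters and a cloning operation showing what their neighborhoods must look like) followed by a global discharging/counting argument; these are precisely the components this paper mirrors in Sections~\ref{sec:Cloning} and~\ref{sec:Discharging} for its refined potential, and they are the parts your sketch leaves entirely blank. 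So the proposal is not a proof; to repair it you would need to replace the gluing step by the collapsing/critical-extension machinery (or prove a correctly oriented substitute) and replace the Kempe-absorption step by an actual structural-plus-discharging analysis.
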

In a later paper, they also showed the following.
\begin{thm}[Kostochka and Yancey 2016+ \cite{KosYankOre}, Theorem 6]\label{thm:KY2}
If $k\geq4$ and $G$ is $k$-critical, then $\rho_{KY}(G)=k(k-3)$ if and only if $G$ is a $k$-Ore graph.
\end{thm}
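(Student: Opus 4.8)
The plan is to treat the two directions separately: the ``if'' direction is a short computation, while the ``only if'' direction rests on the potential-function machinery behind Theorem~\ref{thm:KY}. For the \emph{``if'' direction}, the two facts I would record are that $\rho_{KY}(K_k)=k(k-3)$ and that $\rho_{KY}$ is additive up to a constant under Ore composition. For the first,
\[\rho_{KY}(K_k)=(k-2)(k+1)k-2(k-1)\binom{k}{2}=k\left[(k-2)(k+1)-(k-1)^2\right]=k(k-3).\]
For the second, if $G$ is an Ore composition of $G_1$ (edge-side) and $G_2$ (split-side) then, directly from the construction, $|V(G)|=|V(G_1)|+|V(G_2)|-1$ and $|E(G)|=|E(G_1)|+|E(G_2)|-1$ (the disjointness of $V(G_1)$ and $V(G_2)$ prevents parallel edges at the identifications, and the deleted edge $xy$ is not recreated), so
\[\rho_{KY}(G)=\rho_{KY}(G_1)+\rho_{KY}(G_2)-\left[(k-2)(k+1)-2(k-1)\right]=\rho_{KY}(G_1)+\rho_{KY}(G_2)-k(k-3).\]
Using in addition the standard fact that an Ore composition of two $k$-critical graphs is $k$-critical, an easy induction on the number of Ore compositions used to build $G$ then shows that every $k$-Ore graph is $k$-critical and satisfies $\rho_{KY}(G)=k(k-3)$.

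For the \emph{``only if'' direction} I would induct on $|V(G)|$. Since $\chi(G)=k$ forces $|V(G)|\geq k$, the base case is $|V(G)|=k$, where $G=K_k$ is $k$-Ore. For $|V(G)|>k$ the crux is the claim that \emph{every $k$-critical graph $G\neq K_k$ with $\rho_{KY}(G)=k(k-3)$ is an Ore composition of two $k$-critical graphs $G_1$ and $G_2$.} Granting this, the additivity above gives $\rho_{KY}(G_1)+\rho_{KY}(G_2)=\rho_{KY}(G)+k(k-3)=2k(k-3)$, while Theorem~\ref{thm:KY} gives $\rho_{KY}(G_i)\leq k(k-3)$ for each $i$, so $\rho_{KY}(G_1)=\rho_{KY}(G_2)=k(k-3)$. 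As each $G_i$ is $k$-critical with $k\leq|V(G_i)|<|V(G)|$, the inductive hypothesis makes both $G_i$ $k$-Ore, hence $G$ is $k$-Ore, completing the induction.

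Everything reduces to producing this Ore decomposition, and I expect that to be essentially all of the work; the natural route is to follow and slightly sharpen the proof of Theorem~\ref{thm:KY}. For $R\subseteq V(G)$ set $\rho_G(R)=(k-2)(k+1)|R|-2(k-1)|E(G[R])|$, so that $\rho_G(V(G))=\rho_{KY}(G)$. That proof establishes, for $k$-critical graphs, sharp lower bounds on $\rho_G(R)$ over proper nonempty subsets $R$ (with the tight configurations being the $R$ inducing $K_{k-1}$, $K_{k-2}$, or a single vertex) together with the absence of various ``reducible'' configurations. When $\rho_{KY}(G)=k(k-3)$ holds with equality and $G\neq K_k$, every one of these inequalities becomes tight, and the slack analysis must be carried one step further to exhibit a proper subset $R$ witnessing an Ore decomposition of $G$ --- in the typical case $R$ induces a $K_{k-1}$ to which two overlap vertices attach along complementary parts, exactly the pattern left behind when the split-side of the composition is $K_k$. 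It then remains to check that ``un-composing'' at such an $R$ produces two genuinely $k$-critical graphs (here one uses the criticality of $G$ to glue together the required $(k-1)$-colorings of the two pieces), and to dispose of the degenerate cases --- small $|R|$, overlapping tight sets, $R$ touching vertices of degree $k-1$, and so on. This final structural step, though finite, is the delicate part of the argument.
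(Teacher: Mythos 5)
This theorem is not proved in the paper you are reading: it is imported verbatim from Kostochka and Yancey \cite{KosYankOre}, so there is no in-paper argument to compare against. Judged on its own, your proposal correctly handles the routine half and the reduction, but it leaves the actual theorem unproved. The ``if'' direction is fine: $\rho_{KY}(K_k)=k(k-3)$, the additivity $\rho_{KY}(G)=\rho_{KY}(G_1)+\rho_{KY}(G_2)-k(k-3)$ under Ore composition (your edge count $|E(G)|=|E(G_1)|+|E(G_2)|-1$ is correct), and the standard fact that Ore composition preserves $k$-criticality give the claim by induction. The reduction of the ``only if'' direction to the claim that every extremal $k$-critical $G\neq K_k$ is an Ore composition of two $k$-critical graphs, followed by the induction using Theorem~\ref{thm:KY} to force $\rho_{KY}(G_1)=\rho_{KY}(G_2)=k(k-3)$, is also sound.

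The genuine gap is that this decomposition claim \emph{is} the theorem, and your sketch of it (``follow and slightly sharpen the proof of Theorem~\ref{thm:KY}; when equality holds every inequality becomes tight and the slack analysis must exhibit a proper subset $R$ witnessing an Ore decomposition'') does not constitute an argument. The proof of Theorem~\ref{thm:KY} proceeds by contradiction on a minimum counterexample, using critical extensions, cloning-type reductions and discharging; equality in the final bound does not propagate backwards into equality in the intermediate subset-potential inequalities of some fixed certificate, so there is no single tight configuration handed to you from which overlap vertices can be read off. Extracting the structure of the extremal graphs requires re-running the entire potential argument with refined thresholds that quantify the drop in potential for non-Ore graphs and analyzing all equality cases (including the degenerate configurations you defer: small cutsets, tight sets meeting low-degree vertices, overlapping tight subsets); this is precisely why the characterization occupies a separate full-length paper of Kostochka and Yancey, and indeed the present paper's Theorem~\ref{thm:Main1} is itself a quantitative strengthening of that kind. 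You should also verify, in the un-composing step, that both pieces $G_1=\tilde G_1+xy$ and $G_2=\tilde G_2/\underline{xy}$ are $k$-critical rather than merely non-$(k-1)$-colorable; as stated, your proposal acknowledges but does not carry out this step either. Until the decomposition claim is actually proved, the ``only if'' direction remains open in your write-up.
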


The $k$-Ore graphs are the graphs which attain the bound of Theorem \ref{thm:KY}, and hence it is natural to ask if an increase in edge density is possible when forbidding subgraphs which arise through Ore constructions.
In \cite{Luke4}, Postle shows an increase in asymptotic density for $4$-critical graphs when forbidding both $K_3$ and $C_4$ subgraphs. 
By a construction of Thomas and Walls \cite{TW}, it is not sufficient to forbid only $K_3$.
\begin{thm}[manuscript, \cite{Luke4}]
There exists $\varepsilon>0$ such that if $G$ is a $4$-critical graph of girth at least five, then 
\[|E(G)|\geq\left(\frac{5}{3}+\varepsilon\right)|V(G)|-\frac{2}{3}.\]
\end{thm}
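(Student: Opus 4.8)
The plan is to prove the contrapositive by the potential method of Kostochka and Yancey, using the girth hypothesis to extract a linear improvement over Theorem~\ref{thm:kCritLowerBound}. For $k=4$ the Kostochka--Yancey potential of a subgraph $W$ is $\rho_{KY}(W)=10|V(W)|-6|E(W)|$, and the desired conclusion is exactly $\rho_{KY}(G)\le 4-6\varepsilon|V(G)|$; by Theorem~\ref{thm:KY} we already have $\rho_{KY}(G)\le 4$. So I would fix a small $\varepsilon>0$ to be determined at the end, suppose for contradiction that some $4$-critical graph of girth at least five has $\rho_{KY}(G)>4-6\varepsilon|V(G)|$, and take such a $G$ with $|V(G)|$ minimum, so that every smaller $4$-critical graph of girth at least five satisfies the bound. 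I would then work with the shifted potential $\rho^*(W):=\rho_{KY}(W)+6\varepsilon|V(W)|$, so the hypothesis reads $\rho^*(G)>4$, and aim to contradict the $4$-criticality of $G$.

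The first block of work is structural and rests on the standard criticality reduction: if $R\subsetneq V(G)$ with $|R|\ge 2$, then $G[R]$ is $3$-colourable, and moreover, by list-colouring $G$ minus the interior of $R$ and extending across $R$, one obtains Kostochka--Yancey-type lower bounds on $\rho_{KY}(G[R])$ unless $R$ induces one of a short list of dense configurations assembled (in Ore fashion) from triangles and copies of $K_4$. In a graph of girth at least five essentially all of these configurations are forbidden outright, which is where the extra room comes from. Concretely I would establish: (i) $\delta(G)=3$ and the set $L$ of degree-$3$ vertices is ``spread out'' --- at a minimum no two vertices of $L$ are adjacent, and more, no two lie within distance two --- verifying that any such close pair yields, after a small reduction (deletion/identification), either a smaller $4$-critical graph that is still of girth at least five or an outright $3$-colouring of $G$; and (ii) there is no narrow separation, i.e.\ no separation $(A,B)$ with $|A\cap B|\le 2$ and $\rho_{KY}(G[A])$ below the relevant threshold, since such a separation would let a $3$-colouring of $G[A]$ be merged with one of $G[B]$, again contradicting $\chi(G)=4$. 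As in Kostochka--Yancey, the role of (ii) is to make $G$ ``potential-connected'' so that the global potential is genuinely governed by local degrees.

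With this structure in hand I would finish by a discharging/counting argument that locates the linear surplus. Give each vertex $v$ initial charge $\mu(v)=\deg(v)-\tfrac{10}{3}$, so that $\sum_v\mu(v)=2|E(G)|-\tfrac{10}{3}|V(G)|$, which by the hypothesis on $\rho_{KY}(G)$ is less than $2\varepsilon|V(G)|-\tfrac43$. Degree-$3$ vertices carry charge $-\tfrac13$ and vertices of degree $\ge 4$ carry charge $\ge\tfrac23$. Using (i), each $v\in L$ can collect charge from the vertices within distance two of it: by girth at least five its three neighbours are non-adjacent and share no common neighbour other than $v$, so $v$ sees three distinct vertices at distance one and at least nine further distinct vertices at distance two, and by (i) all of the distance-one vertices (indeed all nearby vertices of $L$) have degree $\ge 4$; the $C_4$-freeness forces these neighbourhoods to be ``wide'', so the charge available to $v$ strictly exceeds $\tfrac13$ by a fixed amount $\varepsilon'=\varepsilon'(\varepsilon)>0$. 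After redistribution every vertex has final charge at least a fixed positive constant times $1$, so $\sum_v\mu(v)\ge \varepsilon'' |V(G)|-O(1)$ for some $\varepsilon''>0$; choosing $\varepsilon<\varepsilon''/2$ and comparing with the upper bound $2\varepsilon|V(G)|-\tfrac43$ forces $|V(G)|\le C$ for an absolute constant $C$, and the finitely many remaining graphs are handled by direct inspection (the additive $-\tfrac23$ absorbs bounded error). This is the contradiction; alternatively the whole argument can be packaged as an induction showing $\rho^*(G)\le 4$ directly, with a sparse degree-$3$ vertex itself serving as a reducible configuration.

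The main obstacle is the structural step, and specifically the classification underlying (i) and (ii): one must show that no small subgraph $H$ of $G$ has $\rho_{KY}(H)$ below threshold, which means combining a $3$-colouring of $G-(V(H)\setminus\partial H)$ with a list-colouring extension over $H$ while carefully tracking the girth --- because the natural reduction moves (deleting edges of $H$, or identifying vertices of $H$) can create triangles or $4$-cycles and so leave the class of girth-five graphs, forcing either a more delicate extension argument or a different reduction in exactly those cases. That case analysis, and pinning down precisely which ``sparse degree-$3$'' local pictures are reducible, is where the real work lies; once it is complete the discharging is routine and the admissible $\varepsilon$ (and the constant $C$) read off from the slack $\varepsilon'$ in the count.
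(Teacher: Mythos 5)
This statement is not proved in the paper at all: it is quoted as a known result from Postle's manuscript \cite{Luke4}, so there is no in-paper argument to compare against, and your proposal has to stand on its own. It does not. The decisive gap is your structural claim (i), that in a minimal counterexample no two degree-$3$ vertices are adjacent, and indeed none lie within distance two. That claim is far too strong. If the degree-$3$ vertices $L$ were pairwise at distance at least three, then $L$ would be independent with no two members sharing a neighbour, so $3|L|\le |V(G)|-|L|$, i.e.\ $|L|\le |V(G)|/4$, and hence $2|E(G)|\ge 4|V(G)|-|L|\ge \tfrac{15}{4}|V(G)|$, giving $|E(G)|\ge 1.875\,|V(G)|$ --- a bound vastly stronger than the $\bigl(\tfrac53+\varepsilon\bigr)|V(G)|$ one is trying to prove. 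But by Theorem~\ref{thm:kCritLowerBound} a $4$-critical graph near the extremal density has roughly two thirds of its vertices of degree $3$ (if $2|E|\approx\tfrac{10}{3}|V|$ then $|L|\gtrsim \tfrac23|V|$), which is incompatible with $|L|\le |V|/4$; so either no sparse girth-five $4$-critical graphs exist at all (a statement much stronger than the theorem, and certainly not obtainable by the ``small reduction'' you invoke) or claim (i) fails. The reductions you appeal to --- deleting or identifying vertices near a close pair of degree-$3$ vertices --- do not in general produce a smaller $4$-critical graph of girth at least five, which is precisely the obstruction you acknowledge in your final paragraph and then leave unresolved; clusters of adjacent low-degree vertices are a genuine feature of critical graphs (cf.\ the cluster analysis in Section~\ref{sec:Cloning} of this paper) and cannot be legislated away.

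The same problem propagates into the discharging step: the asserted ``fixed surplus $\varepsilon'$'' available to each degree-$3$ vertex relies on all of its nearby vertices having degree at least $4$, which again is claim (i). Since in the graphs of interest degree-$3$ vertices outnumber the higher-degree vertices by roughly two to one, a purely local transfer from degree-$\ge 4$ vertices cannot give every vertex positive final charge; this is exactly why the actual arguments in this area (Postle's manuscript, and the present paper for large $k$) do not work by local discharging alone but combine discharging with potential-based control of critical extensions, collapsible sets, and edge-additions. As written, your proposal reduces the theorem to an unproved (and essentially false) structural dichotomy, so the key idea is missing rather than merely unpolished.
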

For larger values of $k$, it is also not sufficient to forbid only $K_{k-1}$. 
This leads to the following conjecture.
\begin{conj}[\cite{PostleArXiv}]\label{conj:Luke}
For every $k\geq4$, there exists $\varepsilon_k>0$ such that if $G$ is a $k$-critical $K_{k-2}$-free graph, then
\[|E(G)|\geq\left(\frac{k}{2}-\frac{1}{k-1}+\varepsilon_k\right)|V(G)|-\frac{k(k-3)}{2(k-1)}.\]
\end{conj}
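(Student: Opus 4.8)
The plan is to use the potential-function method of Kostochka and Yancey, strengthened so that the argument survives Ore compositions. The first point is that ``$K_{k-2}$-free'' is not preserved by Ore compositions and so is useless as an induction hypothesis; instead I would prove the self-contained statement announced in the abstract: for a suitably small $\varepsilon=\varepsilon_k>0$, every $k$-critical graph $G$ satisfies
\[
|E(G)|\ \geq\ \left(\tfrac{k}{2}-\tfrac{1}{k-1}+\varepsilon\right)|V(G)|\;-\;\tfrac{k(k-3)}{2(k-1)}\;-\;(k-1)\varepsilon\,T(G),
\]
where $T(G)$ is a carefully weighted count of copies of $K_{k-1}$ and $K_{k-2}$ in $G$ (normalized so that it behaves well under Ore composition). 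Taking $T(G)=0$ for $K_{k-2}$-free $G$ then gives Conjecture~\ref{conj:Luke} for all sufficiently large $k$. After rescaling, the target is $\rho(G)\leq k(k-3)$ for the modified potential $\rho(G):=\rho_{KY}(G)+2(k-1)\varepsilon|V(G)|-2(k-1)^2\varepsilon\,T(G)$; it is already instructive that the strengthened bound is false without the $T$-term even for $K_k$, which is why the bonus term cannot be dropped and why its precise definition must keep $K_k$ and the $k$-Ore graphs consistent with the inequality.

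Suppose the strengthened statement fails and take a vertex-minimal counterexample $G$ (breaking ties by number of edges). Standard reductions show $G$ is $2$-connected with $\delta(G)\geq k-1$; using Theorems~\ref{thm:KY} and~\ref{thm:KY2} together with minimality, $G$ has no proper subgraph whose Kostochka--Yancey potential is too small, so ``$k$-Ore-like'' behaviour can occur only at the boundaries of such subgraphs. The decisive dichotomy is whether $G$ is an Ore composition of smaller graphs $G_1$ (edge-side) and $G_2$ (split-side). If it is, then $G_1,G_2$ may be taken $k$-critical and smaller; the inductive bound applies to each, and the vertex/edge arithmetic of an Ore composition, $|V(G)|=|V(G_1)|+|V(G_2)|-1$ and $|E(G)|=|E(G_1)|+|E(G_2)|-1$ (equivalently $\rho_{KY}(G)=\rho_{KY}(G_1)+\rho_{KY}(G_2)-k(k-3)$), makes the composition \emph{exactly} tight when $\varepsilon=0$, with the $\varepsilon$-terms leaving a surplus of only $2(k-1)\varepsilon$. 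The real content is that this surplus is too small to absorb even one unit of ``$T$-defect'', so one must establish near-super-additivity $T(G)\gtrsim T(G_1)+T(G_2)$: clique weight in $G_1$ and $G_2$ must be (essentially) inherited by $G$ once the deleted edge and the split vertex are accounted for, which is precisely where the new copies of $K_{k-2}$ and $K_{k-1}$ created near the overlap vertices are used. An analogous but easier recombination disposes of small vertex cuts, so from here on $G$ is Ore-indecomposable.

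The core of the proof is the Ore-indecomposable case. Unlike Kostochka--Yancey, who only need an $O(1)$ improvement there, we must produce a surplus \emph{linear} in $|V(G)|$, i.e., show $\rho_{KY}(G)\leq k(k-3)-2(k-1)\varepsilon|V(G)|+2(k-1)^2\varepsilon\,T(G)$. Since the density bound forces a linear number of vertices to have degree exactly $k-1$, the idea is to run a discharging argument directly on $\rho$ in which each such low-degree vertex releases a nonnegative amount of slack. For $v$ of degree $k-1$ one analyses $(k-1)$-colourings of $G-v$: $k$-criticality forces $N(v)$ to receive all $k-1$ colours in every such colouring, and a Kempe-chain analysis combined with Gallai's structure theorem for the subgraph induced on the low-degree vertices then forces strong local structure --- either a copy of $K_{k-1}$ or $K_{k-2}$ near $v$, which is handed to $T(G)$ and paid for by the $-(k-1)\varepsilon\,T(G)$ term, or a local reducible configuration yielding a fixed positive amount of slack in $\rho$. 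Summing these contributions, and using Ore-indecomposability to bound how the forced cliques can overlap (and hence to bound $T(G)$ from above in terms of the number of clique-type low-degree vertices), gives $\rho(G)<k(k-3)$, contradicting the choice of $G$.

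I expect the main obstacle --- and the reason for the hypothesis $k\geq 33$ --- to be exactly this balancing in the indecomposable case: forcing the local analysis around degree-$(k-1)$ (and degree-$k$) vertices to yield a \emph{uniformly} positive per-vertex slack, while simultaneously controlling the total clique bonus so that it is genuinely affordable rather than merely cancelling the surplus. This requires a delicate case analysis of how low-degree vertices and the cliques they force can overlap and interact with the global $2$-connected, Ore-indecomposable structure; the numerical constants close up only for large $k$, and tracking them is where essentially all of the work lies. A secondary difficulty is the behaviour of $T$ under Ore compositions and small cuts: $T$ must be robust enough to make the decomposable case go through, yet sparse enough not to overwhelm the indecomposable case, so that pinning down the right definition --- weighted clique counts together with the cliques an Ore composition necessarily creates --- is itself part of the argument.
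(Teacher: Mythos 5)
Your high-level plan is the same as the paper's (a potential strengthened by a clique-counting term $T$, a minimal counterexample, superadditivity of $T$ under Ore composition to kill $2$-cuts, then a discharging argument driven by degree-$(k-1)$ vertices), but the step that carries essentially all the weight is asserted rather than proved. In the Ore-indecomposable case you claim that Kempe chains plus Gallai's structure theorem force, around each degree-$(k-1)$ vertex, either a nearby $K_{k-1}$/$K_{k-2}$ (to be ``paid for'' by the $T$-term) or a reducible configuration giving a uniform positive amount of slack. Nothing in those classical tools delivers this: Gallai's theorem only says the low-vertex subgraph is a Gallai forest, which is far too weak to produce a per-vertex linear surplus, and Kostochka--Yancey themselves already needed cluster/cloning arguments just to get an $O(1)$ gain. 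The actual argument requires a different and much heavier toolkit: critical extensions of colored subsets (whose auxiliary critical graph $W$ may itself be $k$-Ore, which is the real reason Theorem~\ref{thm:Main2} and the precise $T$-dependent bound are needed), an edge-addition lemma showing no subgraph is within $\frac{k-4}{2}$ edges of a smaller critical graph, a cloning analysis classifying low vertices as structure-, near-, or lone-vertices, and the resulting forced structures are gadgets ($k$-Ore graphs minus a vertex) and $K_{k-3}$'s with many neighbors of degree at least $\frac{3(k-3)}{2}$ --- not $K_{k-2}$'s handed to $T$. Indeed, in a minimal counterexample diamonds and emeralds are \emph{excluded} (Corollary~\ref{cor:NoDiamond}), so the mechanism you propose (low vertices generating disjoint $K_{k-2}$'s that the $-\delta T(G)$ term absorbs) is not available; in the discharging the $T$-term only enters through $\delta T(G)\ge 0$, and the surplus comes from high-degree neighbors together with the Kierstead--Rabern kernel lemma and the $\mathrm{mic}(G)$ bound.

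Two further points. Your statement that one must ``bound $T(G)$ from above in terms of the number of clique-type low-degree vertices'' has the inequality backwards: since $T$ enters the potential with a negative sign, what your scheme would need is a \emph{lower} bound on $T(G)$ (equivalently, an upper bound on how many low vertices can claim the same clique), and controlling such overlaps is exactly the kind of difficulty your sketch does not address. Finally, note that the statement you are asked to prove is the full conjecture for all $k\ge 4$; both your outline and the paper only reach $k\ge 33$ (you say ``sufficiently large $k$''), so even a completed version of your argument would prove Corollary~\ref{cor:k-2Free}, not Conjecture~\ref{conj:Luke} itself.
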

The conjecture has been proven for $k=5$ \cite{PostleArXiv} and $k=6$ \cite{4Postle}.
In this paper, we prove the conjecture for $k\geq33$.
The method of proof also gives information about the structure of $k$-Ore graphs; in particular, we also prove that there are linearly many vertex-disjoint $K_{k-2}$ subgraphs in $k$-Ore graphs.
In order to track vertex-disjoint $K_{k-2}$ subgraphs (including those inside $K_{k-1}$ subgraphs), we define the following graph parameter.

\begin{defn}\label{def:T}
When a graph $H$ is a disjoint union of $r$ copies of $K_{k-1}$ and $s$ copies of $K_{k-2}$ subgraphs, define $T(H):=2r+s$. Let $G$ be an arbitrary graph.  If $G$ is $K_{k-2}$-free, then $T(G)=0$.  Otherwise, define
\[T(G):=\max_{H\subseteq G}\{T(H)\mid H \text{ is a disjoint union of }K_{k-1}\text{ and }K_{k-2}\text{ components}\}.\]
\end{defn}

In a $k$-Ore graph, $T(G)$ can be shown to be lower-bounded by some constant times the number of vertices (Lemma \ref{thm:TboundonkOre}).
Using the subgraph-measuring parameter $T(G)$ we define the following modified potential function.

\begin{defn}\label{def:potential}
Let $\varepsilon=\frac{4}{k^3-2k^2+3k}$ and $\delta=(k-1)\varepsilon$. 
Given a graph $G$ define the \emph{$\varepsilon$-potential} to be 
\[\rho(G):=((k-2)(k+1)+\varepsilon)|V(G)|-2(k-1)|E(G)|-\delta T(G).\]
\noindent For a vertex subset $R\subseteq V(G)$, we define
\[\rho_{G}(R):=((k-2)(k+1)+\varepsilon)|R|-2(k-1)|E(G[R])|-\delta T(G[R]),\]
where $G[R]$ is the induced subgraph of $G$ on $R$.
\end{defn}

One can check that the construction of $\varepsilon$ guarantees that $\varepsilon\leq1$ for all $k\geq2$ (in particular, it is true for all values of $k$ covered in this paper). With this modified potential function in hand, we are now able to state the main result of this paper.

\begin{thm}\label{thm:Main1}
If $G$ is a $k$-critical graph that is not a $k$-Ore graph and $k\geq33$, then $\rho(G)\leq k(k-3)-2(k-1)$.
\end{thm}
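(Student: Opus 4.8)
The plan is to argue by minimal counterexample, running the potential method of Kostochka and Yancey with the modified $\varepsilon$-potential $\rho$. Suppose the theorem fails, and let $G$ be a $k$-critical graph that is not $k$-Ore with $\rho(G) > k(k-3)-2(k-1)$ and $|V(G)|$ minimum. Since $K_k$ is $k$-Ore we have $G \neq K_k$; by criticality $G$ has minimum degree at least $k-1$, and we call a vertex \emph{low} if its degree is exactly $k-1$. Throughout we use that every proper subgraph of $G$ is $(k-1)$-colorable, that any $k$-critical graph smaller than $G$ which is not $k$-Ore satisfies the conclusion of the theorem (minimality), and that every $k$-Ore graph $H$ satisfies $\rho(H) \leq k(k-3)$: indeed $\rho_{KY}(H) = k(k-3)$ by Theorem~\ref{thm:KY2}, while $\delta T(H) \geq \varepsilon |V(H)|$ by Lemma~\ref{thm:TboundonkOre} (which bounds $T(H)$ below by a linear function of $|V(H)|$), so $\rho(H) = \rho_{KY}(H) + \varepsilon|V(H)| - \delta T(H) \leq k(k-3)$.

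The first main step is a subgraph-potential lemma: for every nonempty $R \subsetneq V(G)$ the quantity $\rho_G(R)$ is at least an explicit bound, with the minimum attained --- up to lower-order terms in $\varepsilon$ --- exactly when $G[R]$ induces a clique $K_{k-1}$ or $K_{k-2}$. The proof is the standard one: if some $R$ violated the bound, then replacing $G - R$ by a suitable small gadget attached to $N(R)$ (or, when $|R|$ is small, contracting $V(G)\setminus R$ appropriately) produces a $k$-critical graph $G'$ with $|V(G')| < |V(G)|$; applying minimality to $G'$ --- it is either $k$-Ore, handled by the bound above, or not $k$-Ore, handled by minimality --- and transferring the resulting inequality back to $G$ contradicts $\rho(G) > k(k-3) - 2(k-1)$. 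The subtlety here, and the reason $T$ is built into $\rho$ from the start, is that the gadget operations introduce or destroy $K_{k-1}$ and $K_{k-2}$ subgraphs, so $T(G')$ must be controlled in terms of $T(G)$ at each step.

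With the subgraph-potential lemma in hand, the second step rules out reducible configurations: a low vertex whose neighborhood is not a clique, two adjacent low vertices with small common neighborhood, and --- most importantly --- a copy of $K_{k-1}$ or $K_{k-2}$ attached to the rest of $G$ through a small vertex set. Each of these yields a small proper subset $R$ whose potential violates the lemma unless $G[R]$ is essentially a clique, and in the clique cases $G$ is forced to be an Ore composition of strictly smaller $k$-critical graphs $G_1$ (edge-side) and $G_2$ (split-side), at least one of which is not $k$-Ore since $G$ is not. Using the composition identity $\rho_{KY}(G) = \rho_{KY}(G_1) + \rho_{KY}(G_2) - k(k-3)$ together with $|V(G)| = |V(G_1)| + |V(G_2)| - 1$, one obtains $\rho(G) = \rho(G_1) + \rho(G_2) - k(k-3) - \varepsilon + \delta(T(G_1) + T(G_2) - T(G))$; bounding each $\rho(G_i)$ by $k(k-3)$ or $k(k-3) - 2(k-1)$ as appropriate, and carrying out a case analysis of how maximum $K_{k-1}$/$K_{k-2}$-packings of $G_1$ and $G_2$ merge into one of $G$ so as to bound $T(G_1) + T(G_2) - T(G)$, forces $\rho(G) \leq k(k-3) - 2(k-1)$, a contradiction. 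It then remains to show that a $k$-critical graph with $G \neq K_k$ and none of these configurations cannot exist for $k \geq 33$; this is a direct count with $\rho$ in which the size-$\varepsilon$ gains from high vertices and from vertices lying outside the clique substructures must be shown to accumulate to the slack $2(k-1)$.

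The main obstacle is precisely this bookkeeping of $T$. The parameter $T$ is not monotone under the contractions, gadget replacements, and Ore compositions used above: a given $K_{k-2}$ of $G$ may fail to survive in $G_1$ or in $G_2$, and conversely new near-cliques may appear, so at every reduction one must re-derive how the optimal packing changes. Moreover, since $\varepsilon = \Theta(1/k^3)$ while the slack that must be maintained is $2(k-1)$, the inequalities are extremely tight --- each $K_{k-1}$ or $K_{k-2}$ substructure has to be accounted for almost exactly by its contribution to $T$ --- and making all of the polynomial-in-$k$ inequalities hold simultaneously is what forces the hypothesis $k \geq 33$.
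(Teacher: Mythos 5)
Your skeleton---minimal counterexample, lower bounds on $\rho_G(R)$ for proper subsets obtained by reducing to smaller critical graphs, then exclusion of reducible configurations---is indeed the framework of the paper, and your preliminary computations (e.g.\ $\rho(H)\leq k(k-3)$ for $k$-Ore $H$, the composition identity for $\rho_{KY}$ under an Ore composition) are correct. The genuine gap is the closing argument, which is where essentially all of the work in the paper lives. You assert that once a handful of configurations (a low vertex with non-clique neighborhood, two adjacent low vertices, a near-clique attached through a small cutset) are excluded, ``a direct count with $\rho$'' in which $\varepsilon$-gains ``accumulate to the slack $2(k-1)$'' finishes the proof; no mechanism is given, and this is not how the numbers close. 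In the paper the endgame is a discharging argument that needs a structure theory around degree-$(k-1)$ vertices which your configurations do not capture: a cloning operation (Lemma \ref{lem:Clone}) whose critical subgraphs are shown to be $k$-Ore or to force a $K_{k-3}$, yielding the classification into structure-, near-, and lone-vertices; the edge-addition lemma (Lemma \ref{lem:EdgeAddition}), stating both that non-clique proper subsets satisfy roughly $\rho_G(R)\geq\rho(G)+(k-2)(k-1)+\delta$ and that no set of at most $\frac{k-4}{2}$ added edges creates a smaller critical subgraph, which in turn drives the quantitative facts that structure-vertices have many neighbors of degree about $\frac{3}{2}k$ (Lemmas \ref{lem:KeyBound} and \ref{lem:Kk3Bound}); and the kernel/choosability bound of Kierstead and Rabern (Lemma \ref{lem:Kernel}) to control edges between low- and medium-degree vertices. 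Discharging then shows a counterexample would consist almost entirely of degree-$(k-1)$ lone vertices and degree-$k$ vertices (Lemma \ref{lem:Discharge2}), and the final contradiction comes from $\mathrm{mic}(G)$ together with Kierstead--Rabern's inequality $2|E(G)|>(k-2)|V(G)|+\mathrm{mic}(G)$; it is this global density comparison, not an accumulation of local gains against the slack $2(k-1)$, that fixes $\varepsilon=\frac{4}{k^3-2k^2+3k}$, while $k\geq33$ arises from one specific discharging case (structure-vertices in a $K_{k-3}$). None of this is present in, or substitutable by, your sketch.

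Two further points on the steps you do describe. First, your reduction of the clique cases to an Ore composition of smaller critical graphs is essentially only the $2$-cutset case (Lemma \ref{lem:No2Cut}), which the paper handles by exactly the potential computation you outline; the hard configurations in a minimal counterexample are not near-cliques behind small cutsets but clusters of degree-$(k-1)$ vertices, whose reductions (clonings and $W$-critical extensions) can yield smaller critical graphs that are $k$-Ore, and handling those requires the diamond/emerald analysis of Section \ref{sec:kOre} and careful tracking of how incomplete an extension is. Second, your simplified bound $\rho(W)\leq k(k-3)$ for $k$-Ore $W$ discards the $-2\delta$ and $-\frac{|V(W)|-1}{k-1}\delta$ terms of Theorem \ref{thm:Main2}, and those terms are exactly what make the tight inequalities in Sections \ref{sec:Ext}--\ref{sec:Cloning} close (e.g.\ the drop of $2(k-1)+\delta$ in Corollary \ref{cor:ExtensionDrop}). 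You correctly identify the bookkeeping of $T$ under reductions as the main obstacle, but acknowledging it is not resolving it; the paper resolves it via Lemma \ref{lem:TwithkOre}, Lemma \ref{thm:TboundonkOre}, and the $-\delta|X|$ correction built into Lemma \ref{lem:PotentialUnderExtension}.
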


We prove this using the potential method of Kostochka and Yancey; however, a limitation in the discharging method used restricts this result to the range where $k\geq33$. 
Because reductions used in our proof could possibly create $k$-Ore graphs as auxiliary graphs, it is important that we also establish bounds for the $\varepsilon$-potential of $k$-Ore graphs.
In Section \ref{sec:kOre}, we prove the following.

\begin{thm}\label{thm:Main2}
If $G$ is a $k$-critical graph that is a $k$-Ore graph and $k\geq4$, then
\begin{enumerate}
\item $\rho(G)=k(k-3)+k\varepsilon-2\delta$ if $G=K_k$, and
\item $\rho(G)\leq k(k-3)+|V(G)|\varepsilon-\left(2+\frac{|V(G)|-1}{k-1}\right)\delta$ if $G\neq K_k$.
\end{enumerate}
\end{thm}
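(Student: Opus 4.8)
The plan is to reduce the statement to a lower bound on $T(G)$ and then establish that bound by induction on the Ore construction of $G$. Writing $\rho(G)=\rho_{KY}(G)+\varepsilon|V(G)|-\delta T(G)$ and applying Theorem~\ref{thm:KY2} (valid since $G$ is $k$-critical and a $k$-Ore graph, hence $\rho_{KY}(G)=k(k-3)$), we obtain $\rho(G)=k(k-3)+\varepsilon|V(G)|-\delta T(G)$. For $G=K_k$ one checks directly that $T(K_k)=2$: a $K_{k-1}$ subgraph gives $T\ge 2$, while $2(k-1)$, $(k-1)+(k-2)$ and $3(k-2)$ all exceed $k$ for $k\ge 4$, so no disjoint union of copies of $K_{k-1}$ and $K_{k-2}$ inside $K_k$ has $T\ge 3$; this gives part~(1). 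Since $\delta>0$, part~(2) is equivalent to the inequality $T(G)\ge 2+\frac{|V(G)|-1}{k-1}$ for every $k$-Ore graph $G\neq K_k$.

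To prove this, set $t(G):=\frac{|V(G)|-1}{k-1}$. Since an Ore composition of $G_1$ and $G_2$ has $|V(G_1)|+|V(G_2)|-1$ vertices, an easy induction shows $|V(G)|\equiv 1\pmod{k-1}$, so $t(G)$ is a positive integer, and $t(G)=t(G_1)+t(G_2)$ whenever $G$ is an Ore composition of $G_1$ and $G_2$. I would prove, by induction on $|V(G)|$, that every $k$-Ore graph satisfies $T(G)\ge t(G)+1$, and moreover $T(G)\ge t(G)+2$ if $G\neq K_k$; the base case $G=K_k$ is immediate. The proof rests on the elementary \emph{deletion lemma}: deleting one vertex or one edge of a graph lowers $T$ by at most $1$, since from an optimal disjoint union of copies of $K_{k-1}$ and $K_{k-2}$ one may replace the affected component by that component minus the deleted vertex, respectively minus an endpoint of the deleted edge, losing at most one unit. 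Combined with the inductive hypothesis and the values $T(K_{k-1})=T(K_k-xy)=2$, this yields $T(G_1-xy)\ge t(G_1)+1$ for any $k$-Ore $G_1$ and edge $xy\in E(G_1)$, and $T(G_2-z)\ge t(G_2)+1$ for any $k$-Ore $G_2$ and vertex $z$.

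For the inductive step, let $G\neq K_k$ be an Ore composition of $G_1$ (edge-side, replaced edge $xy$) and $G_2$ (split-side, split vertex $z$ with neighborhood partitioned into nonempty sets $N_1,N_2$), and write $V(G)=A\cup B\cup\{z_1,z_2\}$ with $A=V(G_1)\setminus\{x,y\}$ and $B=V(G_2)\setminus\{z\}$, so that $G[A\cup\{z_1,z_2\}]=G_1-xy$ (with $x=z_1$, $y=z_2$), $G[B]=G_2-z$, each $z_i$ is adjacent within $B$ exactly to $N_i$, and there are no $A$--$B$ edges. Fix $H_1\subseteq G_1-xy$ realizing $T(G_1-xy)\ge t(G_1)+1$. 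If $z_1$ and $z_2$ both occur in $H_1$ (necessarily in distinct components, as $z_1z_2\notin E(G)$), take an optimal $H_2\subseteq G[B]=G_2-z$ with $T(H_2)\ge t(G_2)+1$; it is disjoint from $H_1$, so $T(G)\ge T(H_1)+T(H_2)\ge t(G)+2$. Otherwise assume $z_2\notin V(H_1)$ and build $H_2\subseteq G[B\cup\{z_2\}]$ (again disjoint from $H_1$) with $T(H_2)\ge t(G_2)+1$, starting from an optimal $H_2^0\subseteq G_2$: if $z\notin V(H_2^0)$ then $H_2^0\subseteq G_2-z=G[B]$; if the component of $z$ is $\{z\}\cup S$ with $S\subseteq N_2$, replace it by $\{z_2\}\cup S$; and if $S$ meets $N_1$ then either $G_2\neq K_k$, so $T(G_2)\ge t(G_2)+2$ and discarding $z$ while keeping $S$ leaves $T\ge t(G_2)+1$, or $G_2=K_k$, so $G[B]=K_{k-1}$ already has $T=2=t(G_2)+1$. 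In every case $T(G)\ge (t(G_1)+1)+(t(G_2)+1)=t(G)+2\ge t(G)+1$, completing the induction and hence the reduction.

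The main obstacle is keeping the bound at $t(G)+2$ rather than $t(G)+1$: deleting the replaced edge on the edge-side and splitting the vertex $z$ on the split-side each threaten to cost one unit of $T$, and one must verify that at most one genuine loss ever occurs---a piece different from $K_k$ supplies a spare unit of slack ($t_i+2$ rather than $t_i+1$), while a piece equal to $K_k$ still contributes the clean clique $K_k-z=K_{k-1}$ or $K_k-xy$. The one place requiring real case analysis is the interaction between the overlap vertices $z_1,z_2$ and a clique through the split vertex $z$ whose vertex set meets both parts $N_1$ and $N_2$.
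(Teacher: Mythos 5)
Your proposal is correct and follows essentially the same route as the paper: both reduce part (2) to the bound $T(G)\geq 2+\frac{|V(G)|-1}{k-1}$ (the paper's Lemmas \ref{lem:TwithkOre} and \ref{thm:TboundonkOre}) and prove it by induction on the Ore composition, using the observation that deleting the replaced edge or the split vertex costs at most one unit of $T$ while a $K_k$ piece costs nothing; your appeal to Theorem \ref{thm:KY2} in place of the paper's explicit vertex/edge count for $k$-Ore graphs is only a cosmetic difference. Note that your case analysis on $z_1,z_2$ in the inductive step is redundant, since $G_1-xy$ and $G_2-z$ are already vertex-disjoint subgraphs of $G$, so $T(G)\geq T(G_1-xy)+T(G_2-z)\geq t(G)+2$ follows directly from the bounds you had already established.
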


Note that Theorem \ref{thm:Main2} is proven using similar methods for $k=5$ in \cite{PostleArXiv}. 
Removing the notation of $\varepsilon$-potential, Theorem \ref{thm:Main1} and \ref{thm:Main2} give the following corollaries.

\begin{coro}
If $k\geq33$ and $G$ is $k$-critical, then
\[|E(G)|\geq\left\lceil\frac{\left[(k-2)(k+1)+\varepsilon\right]|V(G)|-k(k-3)+2\delta-k\varepsilon-\delta T(G)}{2(k-1)}\right\rceil,\]
where $\varepsilon=\frac{4}{k^3-2k^2+3k}$, $\delta=(k-1)\varepsilon$, and $T(G)$ is the subgraph-measuring parameter from Definition \ref{def:T}.
\end{coro}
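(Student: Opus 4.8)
The plan is to derive the Corollary directly from Theorems~\ref{thm:Main1} and~\ref{thm:Main2} by unwinding the definition of the $\varepsilon$-potential. Recall that $\rho(G) = ((k-2)(k+1)+\varepsilon)|V(G)| - 2(k-1)|E(G)| - \delta T(G)$, so solving for $|E(G)|$ gives
\[|E(G)| = \frac{((k-2)(k+1)+\varepsilon)|V(G)| - \delta T(G) - \rho(G)}{2(k-1)}.\]
Thus it suffices to show that for every $k$-critical graph $G$ with $k \geq 33$ we have $\rho(G) \leq k(k-3) + k\varepsilon - 2\delta$; the ceiling in the statement then appears simply because $|E(G)|$ is an integer.

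First I would dispose of the case where $G$ is not a $k$-Ore graph. Here Theorem~\ref{thm:Main1} gives $\rho(G) \leq k(k-3) - 2(k-1)$, and since $\delta = (k-1)\varepsilon$ with $0 < \varepsilon \leq 1$ (guaranteed by the construction of $\varepsilon$ for $k \geq 2$), one checks the elementary inequality $k(k-3) - 2(k-1) \leq k(k-3) + k\varepsilon - 2\delta$, which rearranges to $(k-2)\varepsilon \leq 2(k-1)$ and hence holds. Next, if $G$ is a $k$-Ore graph, I would split into the two cases of Theorem~\ref{thm:Main2}. When $G = K_k$ the potential equals $k(k-3) + k\varepsilon - 2\delta$ exactly, so the target inequality holds with equality (and the resulting edge bound is tight, consistent with $|E(K_k)| = \binom{k}{2}$). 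When $G \neq K_k$, Theorem~\ref{thm:Main2} gives $\rho(G) \leq k(k-3) + |V(G)|\varepsilon - \bigl(2 + \frac{|V(G)|-1}{k-1}\bigr)\delta$; substituting $\delta = (k-1)\varepsilon$ collapses the $|V(G)|$-dependent terms, and the claim $\rho(G) \leq k(k-3)+k\varepsilon-2\delta$ reduces to $\varepsilon \leq k\varepsilon$, i.e.\ $k \geq 1$.

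Finally I would combine the three cases: in each of them $\rho(G) \leq k(k-3) + k\varepsilon - 2\delta$, so
\[|E(G)| \geq \frac{((k-2)(k+1)+\varepsilon)|V(G)| - \delta T(G) - k(k-3) - k\varepsilon + 2\delta}{2(k-1)},\]
and since the left-hand side is an integer we may replace the right-hand side by its ceiling, which is precisely the stated bound. There is essentially no hard step here — the content of the Corollary lies entirely in Theorems~\ref{thm:Main1} and~\ref{thm:Main2} — and the only point requiring any care is verifying that the $k$-Ore bounds of Theorem~\ref{thm:Main2} are dominated by $k(k-3)+k\varepsilon-2\delta$, which as noted reduces to the trivial inequalities $\varepsilon \leq 1$ and $k \geq 1$.
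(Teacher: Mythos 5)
Your proposal is correct and matches the paper's (implicit) argument: the corollary is obtained exactly by bounding $\rho(G)$ via Theorem \ref{thm:Main1} for non-$k$-Ore graphs and Theorem \ref{thm:Main2} for $k$-Ore graphs, noting that all three bounds are dominated by $k(k-3)+k\varepsilon-2\delta$ (using $\delta=(k-1)\varepsilon$ and $\varepsilon\leq1$), and then unwinding the definition of $\varepsilon$-potential and taking the ceiling since $|E(G)|$ is an integer. No gaps.
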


\begin{coro}\label{cor:k-2Free}
If $k\geq33$, then there exists some $\varepsilon_k>0$ such that if $G$ is $k$-critical and $K_{k-2}$-free, then
\[|E(G)|\geq\left(\frac{k}{2}-\frac{1}{k-1}+\varepsilon_k\right)|V(G)|-\frac{k(k-3)}{2(k-1)}.\]
\end{coro}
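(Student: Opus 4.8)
The plan is to derive Corollary~\ref{cor:k-2Free} directly from Theorems~\ref{thm:Main1} and~\ref{thm:Main2} by specializing the $\varepsilon$-potential bound to $K_{k-2}$-free graphs. First I would observe that if $G$ is $K_{k-2}$-free, then by Definition~\ref{def:T} we have $T(G)=0$, so the term $\delta T(G)$ vanishes from $\rho(G)$, and moreover $G$ cannot be a $k$-Ore graph: every $k$-Ore graph contains $K_{k-1}$ (indeed $K_k$ is $k$-Ore and every Ore composition preserves a copy of a large clique from the split side, so in particular a $K_{k-2}$), hence Theorem~\ref{thm:Main1} applies unconditionally. Thus for a $K_{k-2}$-free $k$-critical graph $G$ we get
\[((k-2)(k+1)+\varepsilon)|V(G)|-2(k-1)|E(G)|=\rho(G)\leq k(k-3)-2(k-1).\]

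Next I would simply rearrange this inequality to isolate $|E(G)|$. Dividing through by $2(k-1)>0$ and solving gives
\[|E(G)|\geq\frac{(k-2)(k+1)+\varepsilon}{2(k-1)}|V(G)|-\frac{k(k-3)-2(k-1)}{2(k-1)}.\]
Now I would check that $\frac{(k-2)(k+1)}{2(k-1)}=\frac{k^2-k-2}{2(k-1)}=\frac{k}{2}-\frac{1}{k-1}$ by a one-line polynomial identity, so that the leading coefficient becomes $\frac{k}{2}-\frac{1}{k-1}+\varepsilon_k$ with $\varepsilon_k:=\frac{\varepsilon}{2(k-1)}=\frac{2}{(k-1)(k^3-2k^2+3k)}>0$. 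Finally I would note that the constant term $-\frac{k(k-3)-2(k-1)}{2(k-1)}=-\frac{k(k-3)}{2(k-1)}+1$ is strictly larger than $-\frac{k(k-3)}{2(k-1)}$, so the bound we obtained is at least as strong as the claimed one; discarding the harmless ``$+1$'' yields exactly the statement of the corollary.

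The only genuine content beyond arithmetic is the claim that a $K_{k-2}$-free graph is never $k$-Ore, which I expect to be the one step needing care. I would argue it by induction on the number of Ore compositions: $K_k\supseteq K_{k-2}$ as a base case, and in an Ore composition of $G_1$ and $G_2$, the split side $G_2$ has a vertex $z$ split into $z_1,z_2$; any $K_{k-2}$ subgraph of $G_2$ avoiding $z$ survives intact in the composition, and since $G_2$ is $k$-Ore it contains $K_k$ somewhere, and one can always locate a $K_{k-2}$ (in fact a $K_{k-1}$) disjoint from any single prescribed vertex provided $G_2$ has enough vertices—this is where a short separate lemma, or a direct appeal to the structure of $k$-Ore graphs established in Section~\ref{sec:kOre} (cf.\ Lemma~\ref{thm:TboundonkOre}, which already asserts $T(G)$ grows linearly and hence is positive), closes the gap cleanly. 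Everything else is a routine substitution and sign check, so the proof is short.
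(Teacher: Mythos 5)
Your proposal is correct and follows essentially the route the paper intends: for $K_{k-2}$-free $G$ one has $T(G)=0$ by Definition~\ref{def:T}, $G$ cannot be $k$-Ore (most cleanly because $K_k\supseteq K_{k-2}$ and Lemma~\ref{thm:TboundonkOre} forces $T(G)>0$ for any other $k$-Ore graph, which by definition requires a $K_{k-2}$), so Theorem~\ref{thm:Main1} applies and rearranging $\rho(G)\leq k(k-3)-2(k-1)$ gives the stated bound with $\varepsilon_k=\varepsilon/\bigl(2(k-1)\bigr)$. Your fallback justification via $T$ is exactly the right way to close the one nontrivial step, so no gap remains.
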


Corollary \ref{cor:k-2Free} confirms Conjecture \ref{conj:Luke} for $k\geq33$.
We note that the class of $K_s$-free $k$-critical graphs was also studied by Krivelevich \cite{Krivelevich}.

\subsection{Outline of Paper and Notation}
The paper is organized as follows.  In Section \ref{sec:Prelim}, we establish some values for $\varepsilon$-potential.
We also prove some results about list colorings which are used in Section \ref{sec:Discharging}.
In Section \ref{sec:kOre}, we prove Theorem \ref{thm:Main2} and also prove results about subgraphs in $k$-Ore graphs.  These results are needed for our approach to Theorem \ref{thm:Main1}.
Sections \ref{sec:Ext}--\ref{sec:Discharging} address Theorem \ref{thm:Main1}.
In Section \ref{sec:Ext}, we define an auxiliary graph constructed from a partial $(k-1)$-coloring of a graph, and prove lemmas about the $\varepsilon$-potential of said graph.
In Section \ref{sec:EdgeAdd}, we work towards an important lemma (Lemma \ref{lem:EdgeAddition}) which says that subgraphs in a minimal counterexample to Theorem \ref{thm:Main1} must be many edges away from being $k$-critical.
In Section \ref{sec:Cloning}, we prepare for discharging by proving results on the structure near vertices of low degree in a minimal counterexample to Theorem \ref{thm:Main1}.
In Section \ref{sec:Discharging}, we complete the proof of Theorem \ref{thm:Main1} using a discharging argument.

Throughout the paper, we make use of the following concepts and notation. 
Given a graph $G$, let $x,y$ be vertices of $G$ and $R$ be a proper vertex subset of $G$.
We use $G/$\ul{$xy$} to refer to the graph obtained from $G$ by identifying $x$ and $y$; that is $G/$\ul{$xy$} is obtained by deleting $x,y$ and adding a new vertex \ul{$xy$} which is adjacent to each vertex in $N_G(x)\cup N_G(y)$.
The \emph{boundary vertices} of $R$ (in $G$) is the set $\partial_GR:=\{u\in R \mid N_G(u)-R\neq\emptyset\}.$
The \emph{closed neighborhood} of $x$ is the set $N_G[x]:=N_G(x)\cup\{x\}$.

The \emph{maximum independent cover number} of $G$, denoted $\text{mic}(G)$, is the maximum of $\sum_{x\in I}\deg_G(x)$ over all independent sets $I\subseteq V(G)$.
For terms not defined here see \cite{west}.


\section{Preliminaries}\label{sec:Prelim}
When proving bounds on $\rho(G)$, it is important to know the $\varepsilon$-potential of complete graphs.
\begin{observation}\label{fct:CompletePotential}~
\begin{enumerate}
\item $\rho(K_k)=k^2-3k+k\varepsilon-2\delta$.
\item $\rho(K_1)=k^2-k-2+\varepsilon$.
\item $\rho(K_{k-1})=2k^2-6k+4+(k-1)\varepsilon-2\delta$.
\item For $1<\ell<k-1$, the $\varepsilon$-potential of $K_\ell$ is bounded by $\rho(K_\ell)\geq2k^2-4k-2+2\varepsilon$.
\end{enumerate}
\end{observation}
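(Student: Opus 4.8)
The plan is to verify all four statements by directly evaluating $\rho$ on complete graphs, the only delicate point being the value of the parameter $T$. Since $|V(K_n)|=n$ and $|E(K_n)|=\binom{n}{2}$, Definition~\ref{def:potential} gives
\[\rho(K_n)=\big((k-2)(k+1)+\varepsilon\big)n-(k-1)n(n-1)-\delta\,T(K_n),\]
so the first step is to record $T(K_n)$ in the cases of interest. If $n\le k-3$ then $K_n$ contains no $K_{k-2}$, hence $T(K_n)=0$; in particular $T(K_1)=0$. If $n=k-2$ then the only disjoint union of $K_{k-1}$'s and $K_{k-2}$'s inside $K_{k-2}$ is a single copy of $K_{k-2}$, so $T(K_{k-2})=1$. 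If $n\in\{k-1,k\}$ then a single $K_{k-1}$ is such a subgraph and nothing larger fits, because $2(k-2)>k$ and $(k-1)+(k-2)>k$ for $k\ge4$; hence $T(K_{k-1})=T(K_k)=2$.

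With these values, parts (1)--(3) are immediate expansions of the displayed identity. For (1), $\rho(K_k)=k(k-2)(k+1)-k(k-1)^2+k\varepsilon-2\delta=(k^3-k^2-2k)-(k^3-2k^2+k)+k\varepsilon-2\delta=k^2-3k+k\varepsilon-2\delta$. For (2), $\rho(K_1)=(k-2)(k+1)+\varepsilon=k^2-k-2+\varepsilon$. For (3), factoring $(k-1)(k-2)$ out of the first two terms gives $\rho(K_{k-1})=(k-1)(k-2)\big[(k+1)-(k-1)\big]+(k-1)\varepsilon-2\delta=2(k-1)(k-2)+(k-1)\varepsilon-2\delta=2k^2-6k+4+(k-1)\varepsilon-2\delta$.

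For (4), set $\phi(\ell):=\big((k-2)(k+1)+\varepsilon\big)\ell-(k-1)\ell(\ell-1)$, so that $\rho(K_\ell)=\phi(\ell)$ for $2\le\ell\le k-3$ and $\rho(K_{k-2})=\phi(k-2)-\delta$. Since $\phi$ is a concave quadratic in $\ell$ (its leading coefficient is $-(k-1)<0$), over the integer interval $[2,k-3]$ it is minimized at an endpoint; one computes $\phi(2)=2k^2-4k-2+2\varepsilon$ and $\phi(k-3)=4k^2-18k+18+(k-3)\varepsilon$, and $\phi(k-3)-\phi(2)=2(k-2)(k-5)+(k-5)\varepsilon\ge0$ for $k\ge5$. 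Finally, for $\ell=k-2$ one uses $\delta=(k-1)\varepsilon$ to get $\rho(K_{k-2})=\phi(k-2)-\delta=3k^2-11k+10-\varepsilon$, and $\rho(K_{k-2})-(2k^2-4k-2+2\varepsilon)=(k-3)(k-4)-3\varepsilon\ge0$, since $(k-3)(k-4)\ge2$ while $3\varepsilon<1$ for $k\ge4$ directly from the formula for $\varepsilon$; hence $\rho(K_\ell)\ge2k^2-4k-2+2\varepsilon$ for all $1<\ell<k-1$. I do not expect a genuine obstacle here — the whole statement is elementary algebra — and the only things to be careful about are pinning down $T(K_n)$ correctly and observing, via concavity, that $\ell=2$ is the extremal case in part (4).
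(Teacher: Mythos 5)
Your computation is correct and is exactly the direct verification the paper leaves implicit (the Observation is stated there without proof): you pin down $T(K_n)$ correctly in each case, and the algebra in parts (1)--(3) and the concavity reduction to the endpoints $\ell=2$, $\ell=k-3$, $\ell=k-2$ in part (4) all check out.

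One caveat at the boundary $k=4$: your closing inequality $(k-3)(k-4)\ge 2$ holds only for $k\ge 5$ (it equals $0$ at $k=4$). At $k=4$ the only admissible value in part (4) is $\ell=2=k-2$, where $T(K_2)=1$ gives $\rho(K_2)=(10+\varepsilon)\cdot 2-6-\delta=14-\varepsilon$, which is \emph{below} the claimed bound $2k^2-4k-2+2\varepsilon=14+2\varepsilon$ by $3\varepsilon=\delta$; so part (4) as literally stated fails at $k=4$, and your argument proves it precisely for $k\ge5$. This is immaterial for the paper, which only invokes the observation for its minimal counterexample with $k\ge33$, but the restriction should be stated. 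A similar harmless slip: the justification $2(k-2)>k$ for $T(K_k)=2$ is an equality at $k=4$, where two disjoint $K_2$'s do fit inside $K_4$; they still yield $T=2$, so the stated value is unaffected.
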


We now establish some edge bounds which will be needed for the final stage of discharging in Section \ref{sec:Discharging}.
Given a graph $G$ and vertex subsets $A,B$, we define 
$e(A,B)$ to be the number of edges from a vertex in $A$ to a vertex in $B$.
That is, let $e(A,B):=\sum_{a\in A}|N_{G[A\cup B]}(a)\cap B|$.
We use the following lemma due to Kierstead and Rabern.

\begin{lemma}[Kierstead and Rabern 2015 \cite{KiersteadRabern}, Main Lemma]\label{lem:Kernel}
Let $G$ be a nonempty graph and $f:V\rightarrow \mathbb{N}$ with $f(v)\leq \deg_G(v)+1$ for all $v\in V(G)$.
If there is an independent set $A\subseteq V(G)$ such that 
\begin{equation}\label{eq:Kernel}
e(A,V(G))\geq\sum_{v\in V(G)}\left[\deg_G(v)+1-f(v)\right],
\end{equation}
then $G$ has a nonempty induced subgraph $H$ that is $f_H$-choosable where $f_H(v):=f(v)+\deg_H(v)-\deg_G(v)$.
\end{lemma}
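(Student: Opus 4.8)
The plan is to rephrase the statement in terms of a \emph{deficiency} function and then invoke the kernel method. Set $b(v):=\deg_G(v)+1-f(v)$, so that $b(v)\ge 0$ by hypothesis, $f(v)=\deg_G(v)+1-b(v)$, and, crucially, for every induced subgraph $H$ one has $f_H(v)=\deg_H(v)+1-b(v)$: the function $b$ is intrinsic and is unchanged when passing to induced subgraphs. In this language the hypothesis reads $\sum_{v\in V(G)} b(v)\le e(A,V(G))$, and the goal is to find a nonempty induced $H$ that is $(\deg_H+1-b)$-choosable.

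To produce such an $H$ I would use the kernel method (kernel-perfect digraphs): if $H$ admits an orientation $D$ that is \emph{kernel-perfect} (every induced subdigraph has a kernel) and satisfies $d^+_D(v)\le f_H(v)-1$ for all $v$ -- equivalently $d^-_D(v)\ge b(v)$ -- then $H$ is $f_H$-choosable, since any list assignment with $|L(v)|=f_H(v)\ge d^+_D(v)+1$ can then be realized. So the task reduces to finding a nonempty induced $H\subseteq G$ together with a kernel-perfect orientation in which every vertex has in-degree at least $b(v)$. One may first reduce to $G$ connected: both the inequality $\sum b(v)\le e(A,V(G))$ and the conclusion (a witnessing $H$ lies in a single component) split over the components, so some component still satisfies the hypothesis.

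The heart of the argument is the extraction of $H$ and its orientation, and here the independence of $A$ is the main lever: every edge meeting $A$ has exactly one end in $A$, so orienting all such edges out of $A$ hands $V(G)\setminus A$ a reservoir of in-credit of total size $e(A,V(G))\ge\sum_v b(v)$. The difficulties are that this reservoir is global rather than vertex-by-vertex, and that the vertices of $A$ receive none of it. I would handle both by a peeling process: maintain a current induced subgraph with a partial orientation, and while some set $S$ is ``starved'' -- meaning the edges available to it cannot meet the demand $\sum_{v\in S}b(v)$ -- delete a minimal such $S$, charging the deletion against the slack in the inequality and re-examining (vertices of $A$ that cannot be absorbed are shed first). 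The inequality $\sum b(v)\le e(A,V(G))$ is exactly what prevents the process from deleting everything, so the surviving $H$ is nonempty; on $H$ the remaining (non-$A$) edges are then oriented to meet all the pointwise demands $d^-(v)\ge b(v)$ by a max-flow / Hall-type argument converting the now-valid cut conditions into a single balanced orientation.

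The step I expect to be the main obstacle is guaranteeing that the orientation built this way is kernel-perfect while simultaneously keeping $H$ nonempty: kernel-perfectness is a delicate global property and is not preserved under arbitrary re-routing of the flow. I would try to arrange the peeling so that the final orientation has a controlled shape -- for instance acyclic on $V(H)\setminus A$ with all $A$-edges directed out of $A$, a ``bipartite-like'' structure for which the existence of the required kernels can be verified directly -- so that the balancing of the single global reservoir $e(A,V(G))$ against the pointwise demands $b(v)$ and the control of the digraph's cycle structure are carried out together rather than in conflict.
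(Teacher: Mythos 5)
You should first note that this lemma is not proved in the paper at all: it is imported verbatim from Kierstead and Rabern, so there is no in-paper argument to compare against, and your outline has to stand on its own. As it stands it is a program rather than a proof, and the two load-bearing steps are exactly the ones left open. Your reformulation via $b(v):=\deg_G(v)+1-f(v)$ (intrinsic under passing to induced subgraphs) and the reduction to finding a kernel-perfect orientation with $d^-_D(v)\geq b(v)$ are both fine. The first genuine gap is the peeling step. Deleting a ``starved'' set $S$ changes \emph{both} sides of the inequality: removing vertices of $A$, or neighbours of $A$, destroys reservoir edges counted in $e(A,V(G))$, and since the hypothesis may hold with equality there is no slack to ``charge the deletion against.'' You state no invariant that is preserved by a deletion, no argument that the process terminates with $H$ nonempty, and no verification that the survivor satisfies the pointwise cut conditions $\sum_{v\in S}b(v)\leq |E(H[S])|+e_H(S,V(H)\setminus S)$ that your Hall/flow step needs. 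Converting the single global inequality into local demands is the entire content of the lemma, and it is precisely what is missing.

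The second concrete problem is that the orientation shape you propose cannot serve $A$ itself. With every $A$-incident edge directed out of $A$, each surviving $v\in A\cap V(H)$ has in-degree $0$, so you need $b(v)=0$, i.e.\ $f(v)=\deg_G(v)+1$, for every such vertex; the hypothesis gives nothing of the sort. The obvious fixes both strain the counting: deleting such a $v$ removes $\deg_G(v)$ units of reservoir while cancelling only $b(v)\leq\deg_G(v)+1$ units of demand (these vertices' edges \emph{are} the reservoir), and reorienting some edges into $v$ depletes the credit available to $V(H)\setminus A$. On top of this sits the obstacle you flag yourself: an acyclic orientation of $H-A$ has a source, whose entire demand must then be met by $A$-edges, so kernel-perfectness and the in-degree lower bounds fight each other and no mechanism is given to reconcile them. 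If you want to salvage the approach, the natural way to make the charging precise is single-vertex deletions with explicit criteria --- delete $v\notin A$ when $b(v)\geq|N_G(v)\cap A|$ and $v\in A$ when $b(v)\geq\deg_G(v)$, both of which visibly preserve the inequality because $b$ is intrinsic --- and then analyse the terminal configuration directly; this is far closer to how the cited source argues than the global flow construction you sketch.
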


\begin{lemma}
Let $G$ be a $k$-critical graph with vertex subsets $A,B_0,B_1$ such that $A$ is independent, $\deg_G(a)=k-1$ for each $a\in A$, and $\deg_G(b)=k+i$ for each $b\in B_i$ where $i\in\{0,1\}$. Then $e(A,B_0\cup B_1)< |A|+2|B_0|+3|B_1|$.
\end{lemma}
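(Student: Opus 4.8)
The plan is to argue by contradiction via the Kierstead--Rabern lemma (Lemma~\ref{lem:Kernel}). So suppose $e(A,B_0\cup B_1)\geq |A|+2|B_0|+3|B_1|$. I would work inside $G^\ast:=G[A\cup B_0\cup B_1]$, in which $A$ is still independent, and set $f(v):=k-1-\deg_G(v)+\deg_{G^\ast}(v)$, so that $f(v)=\deg_{G^\ast}(v)$ for $v\in A$, $f(v)=\deg_{G^\ast}(v)-1$ for $v\in B_0$, and $f(v)=\deg_{G^\ast}(v)-2$ for $v\in B_1$. With this choice one computes $\deg_{G^\ast}(v)+1-f(v)=\deg_G(v)-(k-2)$, which equals $1$, $2$, $3$ for $v\in A$, $B_0$, $B_1$ respectively; hence the right side of~\eqref{eq:Kernel} is exactly $|A|+2|B_0|+3|B_1|$, while the left side $e(A,V(G^\ast))$ equals $e(A,B_0\cup B_1)$ because $A$ is independent. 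Thus the assumed inequality is precisely the hypothesis~\eqref{eq:Kernel}. The only thing standing in the way of applying the lemma is that $f$ must satisfy $1\leq f(v)\leq \deg_{G^\ast}(v)+1$: the upper bound holds automatically (since $\deg_G(v)\geq k-2$), but the lower bound fails exactly for an isolated vertex of $A$ in $G^\ast$, a vertex of $B_0$ with $\deg_{G^\ast}(v)\leq 1$, or a vertex of $B_1$ with $\deg_{G^\ast}(v)\leq 2$.

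To fix this I would clean $G^\ast$: repeatedly delete such a ``bad'' vertex, updating $A,B_0,B_1$ and the graph each time, until none remains, obtaining $G^{\ast\ast}$ with parts $A^{\ast\ast},B_0^{\ast\ast},B_1^{\ast\ast}$. The key observation is that every such deletion can only help: writing $\Phi:=e(A',B_0'\cup B_1')-(|A'|+2|B_0'|+3|B_1'|)$ for the current parts, deleting an isolated vertex of $A'$ leaves $e$ unchanged and lowers the subtracted quantity by $1$; deleting $b\in B_0'$ with at most one neighbour lowers $e$ by at most $1$ and the subtracted quantity by $2$; deleting $b\in B_1'$ with at most two neighbours lowers $e$ by at most $2$ and the subtracted quantity by $3$. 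In every case $\Phi$ strictly increases. Since $\Phi\geq 0$ at the start by our assumption, $\Phi\geq 0$ throughout, and since $\Phi=0$ on the empty configuration the process cannot delete all of $V(G^\ast)$, so $G^{\ast\ast}\neq\emptyset$ (the degenerate case $A\cup B_0\cup B_1=\emptyset$ being excluded implicitly). Redefining $f$ on $G^{\ast\ast}$ by $f(v):=k-1-\deg_G(v)+\deg_{G^{\ast\ast}}(v)$, we now have $1\leq f(v)\leq \deg_{G^{\ast\ast}}(v)+1$ for all $v$, and the same computation as before shows that $\Phi\geq 0$ for $G^{\ast\ast}$ is exactly~\eqref{eq:Kernel} with independent set $A^{\ast\ast}$.

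Lemma~\ref{lem:Kernel} then produces a nonempty induced subgraph $H$ of $G^{\ast\ast}$ --- hence an induced subgraph of $G$ --- that is $f_H$-choosable, where $f_H(v)=f(v)+\deg_H(v)-\deg_{G^{\ast\ast}}(v)=k-1-\deg_G(v)+\deg_H(v)$. If $V(H)=V(G)$ then $H=G$ and $f_H\equiv k-1$, so $G$ is $(k-1)$-choosable and in particular $(k-1)$-colorable, contradicting $\chi(G)=k$. Otherwise $G-V(H)$ is a proper subgraph of $G$, so by $k$-criticality it has a proper $(k-1)$-coloring $\phi$; for $v\in V(H)$ let $L(v):=\{1,\dots,k-1\}\setminus\{\phi(u):u\in N_G(v)\setminus V(H)\}$. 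Since $H$ is induced in $G$ we have $|N_G(v)\setminus V(H)|=\deg_G(v)-\deg_H(v)$, so $|L(v)|\geq k-1-\deg_G(v)+\deg_H(v)=f_H(v)$, and $f_H$-choosability of $H$ yields a proper coloring of $H$ from the lists $L$ that extends $\phi$ to a proper $(k-1)$-coloring of $G$, again contradicting $\chi(G)=k$. This proves $e(A,B_0\cup B_1)<|A|+2|B_0|+3|B_1|$. I expect the cleaning step to be the only real subtlety: one must check that deleting bad vertices preserves~\eqref{eq:Kernel} and terminates at a nonempty graph on which $f$ is admissible; the remaining steps are routine once the degree identities are tracked carefully.
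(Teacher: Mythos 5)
Your proof is correct and takes essentially the same route as the paper: the same function $f$ (with $f(v)=\deg_{G[A\cup B]}(v)$ on $A$ and $\deg_{G[A\cup B]}(v)-1-i$ on $B_i$), the same application of the Kierstead--Rabern lemma to $G[A\cup B_0\cup B_1]$, and the same contradiction obtained by extending a $(k-1)$-coloring of $G-V(H)$ to the $f_H$-choosable subgraph $H$. The only difference is your cleaning step ensuring $f(v)\geq 1$ before invoking the lemma, a hypothesis the paper's proof does not pause to verify; this is extra care within the same argument rather than a different approach.
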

\begin{proof}
Suppose that $G$ is a $k$-critical graph with vertex subsets $A,B_0,B_1$ such that $A$ is independent, $\deg_G(a)=k-1$ for each $a\in A$, and $\deg_G(b)=k+i$ for each $b\in B_i$ where $i\in\{0,1\}$. 
Let $B=B_0\cup B_1$.
Suppose to the contrary that $e(A,B_0\cup B_1)\ge |A|+2|B_0|+3|B_1|$ holds true.  

Let $f:A\cup B\rightarrow\mathbb{N}$ where $f(v)=\deg_{G[A\cup B]}(v)$ if $v\in A$ and $f(v)=\deg_{G[A\cup B]}(v)-1-i$ if $v\in B_i$. 
Then the right side of Equation \ref{eq:Kernel} becomes
\[\sum_{v\in A}1+\sum_{v\in B_0}2+\sum_{v\in B_1}3=|A|+2|B_0|+3|B_1|.\]
It follows from Lemma \ref{lem:Kernel} that $G[A\cup B]$, and thus $G$, has a nonempty induced subgraph $H$ that is $f_H$-choosable where $f_H(v):=f(v)+\deg_H(v)-\deg_{G[A\cup B]}(v)$.

Since $G$ is $k$-critical, there exists a $(k-1)$-coloring $\phi$ of $G-H$.  For each vertex $v\in V(H)\cap A$, there are at least $\deg_H(v)$ colors available and we see that $f_H(v)=\deg_H(v)$.  Similarly, for each $v\in V(H)\cap B$, there are at least $\deg_H(v)-1-i$ colors available and $f_H(v)=\deg_H(v)-1-i$.  Therefore, we can use $f_H$-choosability to extend $\phi$ to all of $G$, which is a contradiction.
\renewcommand{\qedsymbol}{$\blacksquare$}
\end{proof}

\section{k-Ore graphs}\label{sec:kOre}

Here, we build up results regarding $k$-Ore graphs, which will be needed in order to bound the $\varepsilon$-potential for the reductions of general $k$-critical graphs that we will be using in subsequent sections.

\begin{prop}\label{prp:kOreSequence}
Given a $k$-Ore graph $G$, there is a sequence of $k$-Ore graphs $G_1,G_2,\ldots,G_s$ where $G_1=K_k$, $G_s=G$, and for each $2\leq i\leq s$, the graph $G_i$ is an Ore composition of $G_{i-1}$ and a $k$-Ore graph.
\end{prop}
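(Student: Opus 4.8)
The plan is to proceed by induction on $|V(G)|$, using the fact that every $k$-Ore graph $G \neq K_k$ is itself an Ore composition of two $k$-Ore graphs — one on each of the edge-side and split-side. The observation that makes the induction well-founded is the vertex count of an Ore composition: deleting an edge from $G_1$ keeps $|V(G_1)|$ vertices, splitting a vertex of $G_2$ produces $|V(G_2)|+1$ vertices, and the two identifications remove two vertices, so the composition has $|V(G_1)|+|V(G_2)|-1$ vertices. Since every $k$-Ore graph has at least $k \geq 4$ vertices, both constituents of an Ore composition have strictly fewer vertices than the composition itself.

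For the base case, if $G = K_k$ then the one-term sequence $G_1 = K_k$ works. For the inductive step, write $G$ as an Ore composition whose edge-side is a $k$-Ore graph $H$ and whose split-side is a $k$-Ore graph $F$; by the vertex-count remark, $|V(H)| < |V(G)|$, so the induction hypothesis applies to $H$ and yields a sequence of $k$-Ore graphs $H_1 = K_k, H_2, \ldots, H_t = H$ in which each $H_i$ (for $2 \leq i \leq t$) is an Ore composition of $H_{i-1}$ and a $k$-Ore graph. I would then simply append $G$: set $G_i := H_i$ for $1 \leq i \leq t$ and $G_{t+1} := G$. For $2 \leq i \leq t$ the required property is inherited directly from the sequence for $H$, and for $i = t+1$ it holds because $G$ is, by our choice of decomposition, an Ore composition of $G_t = H$ (the edge-side) and the $k$-Ore graph $F$ (the split-side); note also $G_s = G$ is a $k$-Ore graph by hypothesis and each $G_i = H_i$ is one by the induction hypothesis.

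The only point requiring a sentence of justification is the assertion that a non-complete $k$-Ore graph admits a decomposition whose edge-side is again a $k$-Ore graph. This is exactly the content of the remark following the definition of $k$-Ore graphs: the class is obtained by successive Ore compositions of $K_k$ and previously-constructed $k$-Ore graphs, so any $G \neq K_k$ in the class is the result of an Ore composition of two $k$-Ore graphs (one playing the role of $G_1$, i.e. the edge-side, the other the split-side); equivalently, this follows from a routine "smallest closed class" argument. I do not anticipate a genuine obstacle: the entire content of the proposition is that the a priori tree-shaped recursion defining a $k$-Ore graph can be linearised by always recursing into the edge-side while deferring the split-sides to be attached one at a time, and the argument above is precisely that observation recast as an induction.
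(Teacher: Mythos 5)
Your proposal is correct and follows essentially the same route as the paper: induction on $|V(G)|$, writing $G\neq K_k$ as an Ore composition whose edge-side is a $k$-Ore graph, applying the inductive hypothesis to that edge-side, and appending $G$ to the resulting sequence. The extra remarks you include (the vertex count $|V(G_1)|+|V(G_2)|-1$ ensuring well-foundedness, and the ``smallest closed class'' justification that any non-complete $k$-Ore graph decomposes into two $k$-Ore graphs) are points the paper leaves implicit, but they do not change the argument.
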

\begin{proof}
Let $G$ be a $k$-Ore graph. We will prove this by induction on $|V(G)|$. 
If $G$ is $K_k$ the result is trivial, so we may assume that $G$ is an Ore composition of two $k$-Ore graphs $G_1$ and $G_2$ with overlap vertices $\{x,y\}$.
By induction, there is a sequence $\mathcal{H}=H_1,H_2,...,H_r$ where $H_1=K_k$ and $H_r=G_1$ and each $H_{i}$ is an Ore composition of $H_{i-1}$ and a $k$-Ore graph. Then the desired sequence for $G$ is $\mathcal{H},G$.
\renewcommand{\qedsymbol}{$\blacksquare$}
\end{proof}
Using this proposition, one can picture each $k$-Ore graph as a copy of $K_k$ where some number of edges are replaced by split $k$-Ore graphs.  In fact, any $k$-Ore graph can be obtained by simultaneously replacing some edges of a $K_k$ with suitable split $k$-Ore graphs. Before examining $\varepsilon$-potential, we establish bounds on the subgraph-measuring parameter $T(G)$.

\begin{lemma}\label{lem:TwithkOre}
If $G$ is an Ore composition of $G_1$ and $G_2$, then $T(G)\geq T(G_1)+T(G_2)-2$. Moreover, if $G_1=K_k$ or $G_2=K_k$, then $T(G)\geq T(G_1)+T(G_2)-1$.
Further, if both $G_1$ and $G_2$ are $K_k$, then $T(G)=4$.
\end{lemma}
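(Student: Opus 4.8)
The plan is to analyze how the Ore composition affects disjoint collections of $K_{k-1}$ and $K_{k-2}$ subgraphs. Recall the construction: delete an edge $xy$ from $G_1$, split a vertex $z$ of $G_2$ into $z_1, z_2$, and identify $x$ with $z_1$ and $y$ with $z_2$. Write $G$ for the resulting graph, and let $\overline{xz_1}, \overline{yz_2}$ be the two overlap vertices. The natural approach is: take optimal disjoint families $\mathcal{H}_1 \subseteq G_1$ and $\mathcal{H}_2 \subseteq G_2$ witnessing $T(G_1)$ and $T(G_2)$, and try to transport them into $G$. A clique of $\mathcal{H}_1$ not using $x$ or $y$ survives verbatim in $G$ (since only the edge $xy$ is removed, and all incidences at $x,y$ other than $xy$ are preserved under the identification). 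Similarly, a clique of $\mathcal{H}_2$ not using the split vertex $z$ survives, because splitting $z$ only partitions its neighborhood and relabels $z$ as $z_1$ or $z_2$. Thus at most one clique from $\mathcal{H}_1$ (the one containing $x$ or $y$, if any) and at most one from $\mathcal{H}_2$ (the one containing $z$) fails to transfer directly. A further subtlety is that a surviving clique of $\mathcal{H}_1$ containing $x$ but not $y$, and a surviving clique of $\mathcal{H}_2$ containing $z_1$ (the image of $z$ on the $G_2$ side that got identified with $x$), would now overlap at $\overline{xz_1}$ in $G$; but since each of $\mathcal{H}_1, \mathcal{H}_2$ is internally disjoint and at most one clique of each touches the overlap vertices, we lose at most one clique total on account of such collisions. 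Carefully bookkeeping these losses gives $T(G) \geq T(G_1) + T(G_2) - 2$: we may be forced to discard one ``bad'' clique from each side, and each discarded $K_{k-1}$ or $K_{k-2}$ costs at most $2$ in the $T$-count, but in fact the accounting shows the total deficit is at most $2$ (not $4$) because the constraints interact — I would organize this by cases on whether the overlap vertices lie in cliques of $\mathcal{H}_1$ and of $\mathcal{H}_2$.

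For the ``moreover'' clause, suppose $G_2 = K_k$. Then after deleting nothing from $G_2$ and splitting $z$, the graph $G_2 - z = K_{k-1}$ sits inside $G$ entirely on vertices disjoint from the overlap vertex $\overline{yz_2}$ if we split so that $z_2$ keeps appropriately; more to the point, $K_k$ minus one vertex is a $K_{k-1}$, and one can choose the witnessing family for the $G_2$-side to be exactly this single $K_{k-1}$ avoiding whichever overlap vertex we need, so the interaction with $\mathcal{H}_1$ costs only one clique rather than two. Symmetrically if $G_1 = K_k$: then $G_1 - xy$ still contains $K_{k-1}$'s avoiding $x$ or avoiding $y$, and $K_{k-2}$'s avoiding both, so we can pick the $G_1$-side family to dodge the collision, again saving one. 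The point is that $K_k$ is flexible enough that its contribution to $T$ can be realized by a clique that steers clear of the single problematic overlap vertex. For the last clause, if both $G_1 = G_2 = K_k$ then $G$ is the well-known graph on $2k-1$ vertices (the Ore composition of two $K_k$'s, sometimes called $K_k \ast K_k$ or a Hajós-type join); one checks directly that $G$ contains two disjoint $K_{k-1}$'s — for instance $G_1 - \{x\}$ gives one $K_{k-1}$ and $G_2 - \{z\}$, living on the other $k-1$ original vertices of the $K_k$ side, gives a disjoint one — so $T(G) \geq 4$, and a short argument shows $G$ cannot contain enough disjoint cliques to do better (it has only $2k-1$ vertices, and any $K_{k-1}$ in $G$ must be ``close to'' one of the two original $K_k$'s since $G$ has a small separator, so at most two disjoint $K_{k-1}$'s and no room for an extra $K_{k-2}$), giving $T(G) = 4$.

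The main obstacle I anticipate is the collision analysis in the general bound: one must be careful that a clique of $\mathcal{H}_1$ meeting $\{x,y\}$ and a clique of $\mathcal{H}_2$ meeting $\{z_1, z_2\}$ do not, after identification, merge or overlap in a way that costs more than the claimed deficit of $2$. The cleanest way to handle this is to fix notation for which overlap vertex (if any) each bad clique contains, split into a small number of cases ($\mathcal{H}_1$ has $0$, $1$, or a clique through each of $x$ and $y$ — though disjointness forbids two distinct cliques sharing, at most two cliques meet $\{x,y\}$, one per vertex; similarly for $\mathcal{H}_2$), and in each case exhibit the explicit surviving subfamily in $G$ together with a lower bound on its $T$-value. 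I would also double-check the degenerate possibilities where $x$ or $y$ equals an endpoint chosen in the split, and where the split sends all neighbors to one side (forbidden, since the split vertices must have positive degree), to make sure no edge case breaks the incidence-preservation claims used throughout.
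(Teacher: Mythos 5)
Your overall plan---transport the witnessing families from $G_1$ and $G_2$ into $G$ and account for what gets damaged---is essentially the paper's argument, which observes that $G_1-xy$ and $G_2-z$ embed vertex-disjointly into $G$, so $T(G)\geq T(G_1-xy)+T(G_2-z)$, and then (implicitly) that deleting one edge or one vertex lowers $T$ by at most one. But your write-up has a genuine gap exactly at that last quantitative step. By your own accounting, discarding the (at most one) clique of $\mathcal{H}_1$ containing both $x$ and $y$ and the (at most one) clique of $\mathcal{H}_2$ containing $z$ costs up to $2$ each, i.e.\ only $T(G)\geq T(G_1)+T(G_2)-4$; you then assert the deficit is really at most $2$ ``because the constraints interact,'' deferring to an unspecified case analysis. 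That is not the right mechanism---the two losses are independent and no interaction between the sides rescues the count. What closes the gap is an intra-clique salvage observation that never appears in your proposal: if the bad clique of $\mathcal{H}_1$ is a $K_{k-1}$ through both $x$ and $y$, then after the edge $xy$ is deleted it still contains a $K_{k-2}$ avoiding $x$, so its contribution drops from $2$ to $1$, not to $0$; if it is a $K_{k-2}$, it drops from $1$ to $0$. The same holds for the clique of $\mathcal{H}_2$ through $z$ once $z$ is split/removed. Hence each side loses at most $1$, giving the $-2$ bound; and in the ``moreover'' case your dodge on the $K_k$ side (take the $K_{k-1}$ inside $K_k-xy$ avoiding $x$, resp.\ the $K_{k-1}=K_k-z$) makes that side lossless, so the total deficit is at most $1$. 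Without the salvage step your argument reaches neither bound; note also that your phrase ``costs only one clique rather than two'' conflates cliques with units of $T$, since dropping one $K_{k-1}$ costs two units.

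A secondary point: the collision analysis you flag as the main obstacle is essentially a non-issue. Every clique of $\mathcal{H}_2$ not containing $z$ lands in $V(G_2)\setminus\{z\}$, which is disjoint from $V(G_1)$ inside $G$ (the overlap vertices arise only from $z$), and every clique of $\mathcal{H}_1$ meeting at most one of $x,y$ transfers intact because only the edge $xy$ is deleted; if you simply trim the clique through $z$ rather than trying to keep a surviving image through $z_1$ or $z_2$, no two transported cliques can meet. Your verification of $T(G)=4$ when $G_1=G_2=K_k$ is fine: two disjoint $K_{k-1}$'s exist, and $|V(G)|=2k-1<3k-4$ rules out $T\geq 5$ for $k\geq 4$.
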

\begin{proof}
Suppose that $G$ is an Ore composition of $G_1$ and $G_2$. 
Let $e$ be the replaced edge of $G_1$ and $z$ be the split vertex of $G_2$. 
From the definition of an Ore composition, it follows that $T(G)\geq T(G_1-e)+T(G_2-\{z\})$, and hence $T(G)\geq T(G_1)+T(G_2)-2$.
If $G_1=K_k$, then $T(K_k-e)=2=T(K_k)$ so we get $T(G)\geq T(G_1)+T(G_2)-1$.
We obtain a similar result if $G_2=K_k$ as $T(K_k-z)=2$.
Further, if both $G_1$ and $G_2$ are $K_k$, then $T(G)=4$.
\renewcommand{\qedsymbol}{$\blacksquare$}
\end{proof}

Note that the conclusion of Lemma \ref{lem:TwithkOre} is symmetric.

\begin{lemma}\label{thm:TboundonkOre}
If $G$ is a $k$-Ore graph and $G\neq K_k$, then $T(G)\geq 2+\frac{|V(G)|-1}{k-1}$.
\end{lemma}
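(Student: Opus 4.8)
The plan is to induct on the number of vertices, using the structural decomposition from Proposition~\ref{prp:kOreSequence} together with the subadditivity of $T$ recorded in Lemma~\ref{lem:TwithkOre}. Since $G\neq K_k$, it is an Ore composition of a $k$-Ore graph $G_1$ and a $k$-Ore graph $G_2$; by Proposition~\ref{prp:kOreSequence} we may moreover arrange that $G$ is an Ore composition of a (strictly smaller) $k$-Ore graph $G_1$ and a $k$-Ore graph $G_2$, and it is convenient to take $G_2$ to be the ``split side'' that is appended last. The vertex count behaves as $|V(G)| = |V(G_1)| + |V(G_2)| - 1$, because the two overlap vertices of $G$ are obtained by identifying the endpoints of the replaced edge of $G_1$ with the two halves of the split vertex of $G_2$, so the split of $z$ adds one vertex and the identification removes one. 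Thus $|V(G)|-1 = (|V(G_1)|-1) + (|V(G_2)|-1)$, which is the identity that makes the induction close cleanly.

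First I would handle the base cases of the induction. The smallest non-$K_k$ $k$-Ore graph arises from an Ore composition of two copies of $K_k$; here $|V(G)| = 2k-1$ and, by the last sentence of Lemma~\ref{lem:TwithkOre}, $T(G)=4$. The claimed bound reads $4 \geq 2 + \frac{(2k-1)-1}{k-1} = 2 + 2 = 4$, so equality holds and the base case checks out. More generally, whenever one of $G_1,G_2$ equals $K_k$ the bound needs a slightly sharper input from Lemma~\ref{lem:TwithkOre} (the ``$-1$'' rather than ``$-2$'' form), so I would organize the inductive step into two cases: (a) neither $G_1$ nor $G_2$ is $K_k$, and (b) at least one of them is $K_k$.

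For the inductive step, case (a): both $G_1$ and $G_2$ are $k$-Ore graphs different from $K_k$, so by induction $T(G_i) \geq 2 + \frac{|V(G_i)|-1}{k-1}$ for $i=1,2$. Lemma~\ref{lem:TwithkOre} gives $T(G) \geq T(G_1) + T(G_2) - 2 \geq 2 + \frac{(|V(G_1)|-1) + (|V(G_2)|-1)}{k-1} = 2 + \frac{|V(G)|-1}{k-1}$, as desired. Case (b): say $G_2 = K_k$ (the case $G_1=K_k$ is symmetric by the remark after Lemma~\ref{lem:TwithkOre}), so $T(G_2) = 2$ and $|V(G_2)|-1 = k-1$. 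If $G_1 \neq K_k$ then by induction $T(G_1) \geq 2 + \frac{|V(G_1)|-1}{k-1}$, and the strengthened form of Lemma~\ref{lem:TwithkOre} yields $T(G) \geq T(G_1) + T(G_2) - 1 = T(G_1) + 1 \geq 3 + \frac{|V(G_1)|-1}{k-1} = 2 + \frac{(|V(G_1)|-1) + (k-1)}{k-1} = 2 + \frac{|V(G)|-1}{k-1}$. If instead $G_1 = K_k$ as well, we are back in the base case already treated.

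The only real subtlety — the step I expect to be the main obstacle — is making sure the decomposition can always be taken with $G_2$ playing the split role and that the ``$K_k$ on the edge side'' subcase is correctly covered; the strengthened inequality in Lemma~\ref{lem:TwithkOre} is stated for $G_1=K_k$ or $G_2=K_k$, and its symmetry (noted in the remark) is exactly what lets case (b) go through regardless of which side is $K_k$. Everything else is a transparent bookkeeping of the vertex-count identity $|V(G)|-1 = (|V(G_1)|-1)+(|V(G_2)|-1)$ against the additive-with-loss-$2$ behavior of $T$, with the loss improving to $1$ precisely when a $K_k$ is involved — which is also precisely when one of the summands $\frac{|V(G_i)|-1}{k-1}$ equals the integer $1$ and can absorb the discrepancy.
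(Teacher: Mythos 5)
Your proof is correct and follows essentially the same route as the paper: induction on $|V(G)|$ over the Ore decomposition into $G_1$ and $G_2$, splitting into the cases (both equal to $K_k$, exactly one equal to $K_k$, neither equal to $K_k$) and combining the vertex-count identity $|V(G)|=|V(G_1)|+|V(G_2)|-1$ with the appropriate form of Lemma~\ref{lem:TwithkOre}. The appeal to Proposition~\ref{prp:kOreSequence} is unnecessary (the definition of a non-$K_k$ $k$-Ore graph already provides the decomposition, and the symmetry remark after Lemma~\ref{lem:TwithkOre} handles which side is $K_k$), but this does not affect correctness.
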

\begin{proof}

We proceed by induction on $|V(G)|$. Let $G$ be an Ore composition of two $k$-Ore graphs $G_1$ and $G_2$.
If both $G_1$ and $G_2$ are $K_k$, then $|V(G)|=2k-1$; in this case, $T(G)=4$ as desired.
Suppose instead that exactly one of $G_1,G_2$ is $K_k$.  Because the conclusion of Lemma \ref{lem:TwithkOre} is symmetric and any Ore composition of a graph with $K_k$ adds $k-1$ vertices, we may assume without loss of generality that $G_1=K_k$.
It follows that
\[T(G)\geq T(G_2)+1\geq\left(2+\frac{|V(G_2)|-1}{k-1}\right)+1=2+\frac{|V(G)|-1}{k-1},\]
as desired.
Finally, suppose that neither $G_1$ nor $G_2$ is $K_k$. Then as $|V(G)|=|V(G_1)|+|V(G_2)|-1$, it follows from Lemma \ref{lem:TwithkOre} and induction that
\[T(G)\geq\left(2+\frac{|V(G_1)|-1}{k-1}\right)+\left(2+\frac{|V(G_2)|-1}{k-1}\right)-2=2+\frac{|V(G)|-1}{k-1}.\]
\renewcommand{\qedsymbol}{$\blacksquare$}
\end{proof}

Using Lemma \ref{thm:TboundonkOre}, we now prove Theorem \ref{thm:Main2}.

\begin{proof}[Proof of Theorem \ref{thm:Main2}]
By the definition of $\varepsilon$-potential, it follows that $\rho(K_k)=k(k-3)+k\varepsilon-2\delta$.
Now suppose that $G$ is a $k$-Ore graph which is not $K_k$.
Then $G$ has $k+\ell(k-1)$ vertices and $\frac{(\ell+1)k(k-1)}{2}-\ell$ edges for some $\ell\geq1$. Using Lemma \ref{thm:TboundonkOre}, it is again a straightforward calculation to show that $\rho(G)\leq k(k-3)+|V(G)|\varepsilon-\left(2+\frac{|V(G)|-1}{k-1}\right)\delta.$
\renewcommand{\qedsymbol}{$\blacksquare$}
\end{proof}

It is essential for the proof of Theorem \ref{thm:Main1} to understand the behavior of certain subgraphs of $k$-Ore graphs.  Two useful subgraphs are defined below.

\begin{defn}
A subgraph $D\subseteq G$ is a \emph{diamond of $G$} if $D=K_k-uv$ and $\deg_G(x)=k-1$ for each $x\in V(D)-\{u,v\}$. The vertices $u$ and $v$ are the \emph{endpoints} of the diamond.
A subgraph $D'\subseteq G$ is an \emph{emerald of $G$} if $D'=K_{k-1}$ and $\deg_G(x)=k-1$ for each $x\in V(D')$.
\end{defn}

\begin{lemma}\label{lem:DiamondAwayFromPoint}
If $G$ is a $k$-Ore graph and $v\in V(G)$, then there exists a diamond or emerald of $G$ in $G-v$. 
\end{lemma}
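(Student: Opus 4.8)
The plan is to induct on $|V(G)|$ using Proposition \ref{prp:kOreSequence} and Lemma \ref{lem:TwithkOre}'s underlying structural picture, namely that $G$ is an Ore composition of two $k$-Ore graphs $G_1$ and $G_2$ (with $G_1$ the edge-side and $G_2$ the split-side), unless $G=K_k$. The base case $G=K_k$ is immediate: $K_k - v = K_{k-1}$ has every vertex of degree $k-2$ in $K_{k-1}$, but in $G$ each such vertex has degree $k-1$, so $K_{k-1}$ is an emerald of $G$ contained in $G-v$. For the inductive step, I would fix $v \in V(G)$ and split into cases according to where $v$ lies relative to the composition.

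First I would handle the case where $v$ is not an overlap vertex, so $v$ lies in the interior of exactly one of the two sides. Say $v \in V(G_1) \setminus \{x,y\}$ (the case $v \in V(G_2)$ minus the split vertex's images is symmetric in spirit, though one must be careful since the split vertex $z$ of $G_2$ has been replaced by two vertices $z_1, z_2$). Then the entire split-side structure of $G_2$ sits inside $G - v$. I would apply the induction hypothesis to $G_2$: it yields a diamond or emerald $D$ of $G_2$ inside $G_2 - z$ (choosing the vertex of $G_2$ to delete to be the split vertex $z$). Since $D$ avoids $z$, its vertices and edges are untouched by the Ore composition, so $D$ sits in $G$; moreover the degree-$(k-1)$ conditions defining a diamond/emerald are preserved because the only vertices whose degrees change under the composition are $x, y$ (which lose/gain an edge) and the neighbors of $z$ (redistributed between $z_1$ and $z_2$) — and for the latter, the \emph{sum} of degrees is unchanged but individual neighbors of $z$ keep the same degree since $z$ is just split. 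Wait — here I must double check: a neighbor $w$ of $z$ ends up adjacent to exactly one of $z_1,z_2$, so $\deg_G(w) = \deg_{G_2}(w)$. Good. So if $D$ avoids $x,y$ as well we are done; if $D$ contains $x$ or $y$ I would instead choose to delete a different vertex or argue $D$ still avoids the overlap vertices because $z \notin V(D)$ already handles the split-side. Symmetrically, when $v$ is interior to $G_2$, I apply induction to $G_1$ deleting the endpoint of the replaced edge, and transport the resulting diamond/emerald (which avoids that endpoint, hence avoids the modified edge $xy$).

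For the remaining case, $v$ is one of the overlap vertices, say $v = \underline{xz_1}$. Then $G - v$ still contains all of $G_2 - z_1$'s relevant structure on one side and $G_1 - x$ on the other, but more simply: apply induction to $G_1$ with deleted vertex $x$ (if $G_1 = K_k$ use the base case observation directly), getting a diamond or emerald of $G_1$ avoiding $x$; this subgraph uses only edges of $G_1$ other than possibly the replaced edge $xy$, but since it avoids $x$ it avoids $xy$, so it survives into $G$, and again degrees of its vertices are unchanged (they are not neighbors of $z$ on the $G_2$ side, and within $G_1$ only $x,y$ had edge changes). I should also confirm the subgraph avoids $v = \underline{xz_1}$, which it does since it avoids $x$ and avoids $z_1$ (it lives in $G_1$, disjoint from $G_2 \setminus \{x,y\}$).

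The main obstacle I anticipate is bookkeeping the degree conditions precisely across the Ore composition — specifically making sure that when I pull a diamond or emerald out of $G_i$ it avoids \emph{all} the vertices whose $G$-degree differs from their $G_i$-degree (the replaced-edge endpoints on the edge-side, and nothing problematic on the split-side once we delete the split vertex), and handling the asymmetry between the two sides cleanly. A secondary subtlety is that the induction hypothesis only lets us choose \emph{one} deleted vertex, so in cases where $v$ is interior to a side I must apply induction to the \emph{other} side with a cleverly chosen deletion point (the split vertex or a replaced-edge endpoint) so that the output automatically avoids the composition's seam; verifying such a choice always exists is the crux. Edge cases where $G_1$ or $G_2$ equals $K_k$ should be absorbed by invoking the base-case observation in place of the inductive hypothesis.
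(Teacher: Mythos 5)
Your base case and your treatment of the case where $v$ lies on the edge-side are correct and coincide with the paper's argument: applying induction to the split-side $G_2$ with its split vertex $z$ deleted is safe, since every vertex of $G_2-z$ keeps its degree under the composition, and the resulting diamond or emerald automatically avoids both overlap vertices (it avoids $z$, hence avoids $z_1$ and $z_2$), so it misses any $v$ on the edge-side.

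The gap is in the complementary case, where $v$ lies in the interior of the split side (and in your route through the overlap-vertex case, which has the same defect). There you apply the inductive hypothesis to $G_1$ deleting one endpoint $x$ of the replaced edge and argue the output survives into $G$ because it avoids the edge $xy$. Avoiding that edge is not the issue: the diamond or emerald you obtain may contain the other endpoint $y$ as a degree-constrained vertex, and $y$'s degree does change under the composition, since the overlap vertex obtained from $y$ and $z_2$ has degree $\deg_{G_1}(y)-1+\deg_{G_2}(z_2)$ in $G$, which exceeds $k-1$ whenever $\deg_{G_1}(y)=k-1$ and $\deg_{G_2}(z_2)\ge 2$. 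Concretely, if $G_1=K_k$ your recipe returns $K_k-x=K_{k-1}$, which contains $y$ and is not an emerald of $G$ when $z_2$ has at least two neighbours in $G_2$. Since the induction hypothesis only excludes one vertex, it cannot by itself force avoidance of both $x$ and $y$; this is exactly the crux you flagged but did not resolve. The paper's device is to choose the Ore composition minimizing the order of the edge-side $G_1$: if $G_1=K_k$, then $G_1$ minus the replaced edge is itself a diamond of $G$ whose endpoints are the overlap vertices (endpoints carry no degree requirement); otherwise $G_1$ is an Ore composition of $H_1$ and $H_2$ in which, by the minimality of the edge-side, the replaced edge lies in $H_1$, and induction applied to $H_2$ with its split vertex deleted produces a diamond or emerald avoiding both overlap vertices of $G$, hence valid in $G$ and disjoint from $v$. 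Without this (or an equivalent way of avoiding both endpoints of the replaced edge), your edge-side case does not go through.
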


\begin{proof}
We prove this by induction on $|V(G)|$.
Suppose that $G$ is a $k$-Ore graph and let $v\in V(G)$ be an arbitrary vertex.
If $G=K_k$, then $G-v$ is an emerald of $G$, as desired.
Therefore we may assume that $G$ is an Ore composition of two $k$-Ore graphs $G_1$ and $G_2$ with overlap vertices $\{a,b\}$.
We choose this composition to minimize $|V(G_1)|$, the order of the edge-side.
By induction, there is an emerald or diamond $D$ of $G_2$ not containing $\underline{ab}$. 
Hence, if $v\in V(G_1)$, then $D$ is as desired. 
So we may assume that $v\in V(G_2)-\{\underline{ab}\}$.

Now if $G_1=K_k$, then $G_1-ab$ is a diamond of $G$ not containing $v$ as desired.
Therefore, we may assume that $G_1$ is a composition of two $k$-Ore graphs $H_1$ and $H_2$ with overlap vertices $\{x,y\}$. 
By our choice of $G_1$ it follows that $ab\in E(H_1)$. 
Thus there is an emerald or diamond subgraph $D$ of $H_2$ not containing \ul{$xy$}. Note that $D$ is also an emerald or diamond of $G$ and $v\notin V(D)$, as desired.
\renewcommand{\qedsymbol}{$\blacksquare$}
\end{proof}

\begin{lemma}\label{lem:DiamondAwayFromGroup}
If $G$ is a $k$-Ore graph and $D=K_{k-1}$ is a subgraph of $G$, then either $G= K_k$ or there exists a diamond or emerald of $G$ disjoint from $D$.
\end{lemma}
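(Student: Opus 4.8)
The plan is to induct on $|V(G)|$. If $G=K_k$ the first alternative holds, so assume $G$ is an Ore composition of $k$-Ore graphs $G_1$ (the \emph{edge-side}, with replaced edge $ab$) and $G_2$ (the \emph{split-side}, with split vertex $z$ whose parts $z_1,z_2$ are identified with $a,b$ to become the two overlap vertices). Exactly as in the proof of Lemma \ref{lem:DiamondAwayFromPoint}, I would choose this composition so that $|V(G_1)|$ is as small as possible. Write $A_1:=V(G_1)\setminus\{a,b\}$ and $A_2:=V(G_2)\setminus\{z\}$ for the two ``interior'' vertex sets. The two facts driving the argument are that in $G$ there are no edges between $A_1$ and $A_2$, and that $ab\notin E(G)$.

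The first step is to locate $D$. Since $D=K_{k-1}$ is a clique and $ab\notin E(G)$, $D$ omits one of $a,b$; say $b\notin V(D)$. Since there are no $A_1$--$A_2$ edges, $D$ cannot meet both $A_1$ and $A_2$, so either (I) $V(D)\subseteq V(G_1)$, or (II) $V(D)\subseteq A_2\cup\{a\}$. In case (II) the clique $D$ corresponds to a $K_{k-1}$ subgraph $\hat D$ of $G_2$, obtained by replacing $a$ (if it lies in $V(D)$) by $z$; this is legitimate because the $G_2$-neighbors of $z_1$ are among the neighbors of $z$ in $G_2$.

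In case (I), I would apply Lemma \ref{lem:DiamondAwayFromPoint} to $G_2$ and its split vertex $z$, obtaining a diamond or emerald of $G_2$ contained in $A_2$; since the composition leaves degrees and adjacencies of vertices of $A_2$ unchanged, this is a diamond or emerald of $G$, and it is disjoint from $V(D)\subseteq V(G_1)$. In case (II) I would split further. If $a\in V(D)$ then $z\in V(\hat D)$, so applying the lemma inductively to $(G_2,\hat D)$ gives either $G_2=K_k$ or a diamond/emerald of $G_2$ disjoint from $\hat D$; in the latter case it avoids $z$ and so transfers to a diamond/emerald of $G$ disjoint from $V(D)$. When $G_1=K_k$, the subgraph $K_k-ab$ is a diamond of $G$ (its interior $A_1$ keeps degree $k-1$), disjoint from $V(D)$ whenever $a\notin V(D)$; and in the lone corner case $G_1=G_2=K_k$ with $a\in V(D)$ one checks that the clique $D$ forces the split of $z$ to be $(k-2,1)$, which makes $A_1\cup\{b\}$ an emerald of $G$ disjoint from $V(D)$. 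Finally, when $G_1\neq K_k$, write $G_1$ as an Ore composition of $H_1$ and $H_2$; minimality of $|V(G_1)|$ forces the replaced edge $ab$ to lie in $H_1$ (the same re-routing argument as in Lemma \ref{lem:DiamondAwayFromPoint}), so the interior of $H_2$ is untouched in $G$. Applying Lemma \ref{lem:DiamondAwayFromPoint} to $H_2$ and its split vertex then produces a diamond or emerald inside the interior of $H_2$, which is a diamond or emerald of $G$ disjoint from $V(D)\subseteq A_2\cup\{a\}$ (this interior meets neither $A_2$ nor $\{a,b\}$).

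The main obstacle is the familiar one for transplanting local structures across an Ore composition: a diamond or emerald of a factor need not remain one in $G$, because the two overlap vertices change degree. The choice of a composition minimizing $|V(G_1)|$ is what controls this --- it guarantees that when $G_1\neq K_k$ there is a sub-composition of $G_1$ whose split-side interior is completely insulated from the outer composition, so anything found there survives in $G$. I expect the fiddliest part to be the bookkeeping in case (II): keeping straight which overlap vertex $D$ is allowed to use and disposing of the two-copies-of-$K_k$ corner case; none of it should be deep.
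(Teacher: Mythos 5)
Your proof is correct and follows essentially the same route as the paper: induction on $|V(G)|$, choosing an Ore composition minimizing the edge-side, locating $D$ via the independent cutset $\{a,b\}$, and combining Lemma~\ref{lem:DiamondAwayFromPoint} (including the same re-routing fact that $ab$ lies in the edge-side $H_1$ of a further decomposition of $G_1$) with the inductive hypothesis applied to $(G_2,\hat D)$. The only divergence is cosmetic: in the subcase $a\in V(D)$, $G_2=K_k$ the paper argues uniformly by applying Lemma~\ref{lem:DiamondAwayFromPoint} to $G_1-a$ and using $\deg_{G_1}(b)=\deg_G(b)$, whereas you split on whether $G_1=K_k$ (direct emerald $A_1\cup\{b\}$ via the $(k-2,1)$ split) or not (the $H_2$-interior argument); both handle that case correctly.
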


\begin{proof}
We prove this by induction on $|V(G)|$.
Suppose that $G$ is a $k$-Ore graph and let $D=K_{k-1}$ be a subgraph of $G$.
When $G=K_k$, the lemma is trivial.
So we may assume that $G$ is an Ore composition of two $k$-Ore graphs $G_1$ and $G_2$ with overlap vertices $\{a,b\}$.
Choose this composition to minimize the order of the edge-side, $|V(G_1)|$.
As $\{a,b\}$ is an independent cutset in $G$, it follows that either $D\subseteq G_1-ab$ or $D\subseteq G_2$. 
If $D\subseteq G_1-ab$, then by Lemma \ref{lem:DiamondAwayFromPoint} there exists a diamond or emerald $D'$ of $G_2-\underline{ab}$ and $D'$ is disjoint from $D$ as desired.

Thus we may assume that $V(D)\subseteq V(G_2)\cup\{a,b\}$.
We examine two cases based on whether $V(D)$ contains any of the overlap vertices $\{a,b\}$ or not. 

Since $a$ is not adjacent to $b$ in $G_2$, they cannot both be in $D$.
So first, suppose that $|V(D)\cap\{a,b\}|=1$ and without loss of generality, we assume that $a\in V(D)$. 
If $G_2\neq K_k$, then by induction, there is a diamond or an emerald of $G_2$ disjoint from $D$ and this is also a diamond or an emerald of $G$, as desired.
Therefore we may assume that $G_2=K_k$ and thus $b$ has one neighbor on the split-side of $G$.
It follows that $\deg_{G_1}(b)=\deg_G(b)$.
By Lemma \ref{lem:DiamondAwayFromPoint} 
there is a diamond or emerald $D'$ of $G_1$ in $G_1-a$.
If $D'$ is a diamond, then $D'$ is also a diamond of $G$.
If $D'$ is an emerald, then because $\deg_{G_1}(b)=\deg_G(b)$, it follows that $D'$ is an emerald of $G$.
In either case, $D'\cap D=\emptyset$ as desired.

Second, suppose that $V(D)$ contains neither $a$ nor $b$.
If $G_1=K_k$, then $G_1-ab$ is a diamond that is disjoint from $D$. 
Otherwise, $G_1$ is a composition of two $k$-Ore graphs $H_1$ and $H_2$ with overlap vertices $\{x,y\}$. 
By our choice of $G_1$ it follows that $ab\in E(H_1)$. 
By Lemma \ref{lem:DiamondAwayFromPoint} 
there is a diamond or emerald $D'$ of $H_2-$\ul{$xy$}, which then contains no vertices of $D$.  Thus, $D'$ is also a diamond or emerald of $G$, as desired.
\renewcommand{\qedsymbol}{$\blacksquare$}
\end{proof}


\section{Critical Extensions}\label{sec:Ext}

We now turn towards proving the main result, Theorem \ref{thm:Main1}.
We do this
by discharging on a minimal counterexample; therefore we need to precisely define what makes a graph minimal.
\begin{defn}
A graph $H$ is \emph{smaller} than a graph $G$ if $|V(G)|>|V(H)|$ or, if $|V(G)|=|V(H)|$, then $H$ is smaller if either $|E(G)|>|E(H)|$ or if $|E(G)|=|E(H)|$ and $G$ has fewer pairs of vertices with the same closed neighborhood.
\end{defn}

Given a $k$-critical graph $G$, we have a particular method of examining what subgraphs exist in $G$. Note that if $R$ is a proper vertex subset of $G$, then we can properly $(k-1)$-color $G[R]$.  Such a coloring is used to create the following auxiliary graph.
\begin{defn}\label{def:colorReduce}
Given a $k$-critical graph $G$ and a proper $(k-1)$-coloring $\phi$ on a vertex subset $R$, we define the graph $G_{R,\phi}$ to be the graph obtained from $G$ by identifying all vertices in $\phi^{-1}(i)$ to a single vertex $x_i$ for $1\leq i\leq k-1$, adding the edge $x_ix_j$ for each $1\leq i<j\leq k-1$, and then deleting any parallel edges so that the new vertices form a complete subgraph with no parallel edges.
\end{defn}

Note that if $uv\in E(G)$ for $u\in R$ and $v\in V(G)-R$, then $vx_{\phi(u)}\in E(G_{R,\phi})$.
Further, we will always color $R$ with as few colors as possible, so then it follows that $G_{R,\phi}$ is a smaller graph than $G$, or possibly $G_{R,\phi}=G$ if $R$ is a clique.
One can observe that $G_{R,\phi}$ is not $(k-1)$-colorable; a proof of this is in \cite{kostochkayancey2014} (Claim 14).
Therefore, there is a $k$-critical subgraph $W\subseteq G_{R,\phi}$. Because $G$ is $k$-critical, $W$ must contain at least one vertex in $\{x_1,\ldots,x_{k-1}\}$.  The fact that $W$ is smaller than $G$ when $R$ is not a clique is used frequently in subsequent $\varepsilon$-potential calculations.
\begin{defn}
Given a graph $G_{R,\phi}$ obtained via Definition \ref{def:colorReduce} and a $k$-critical subgraph $W$, we define $R':=\left(R\cup V(W)\right)-X$ to be a \emph{$W$-critical extension of $R$} where $X:=V(W)\cap\{x_1,\ldots,x_{k-1}\}$ is called the \emph{core} of the $W$-critical extension. If $R'=V(G)$, then we say that $R'$ is a \emph{spanning} $W$-critical extension. Lastly, the $W$-critical extension $R'$ is \emph{complete} if 
\begin{equation}\label{eq:completeEq}
|E(G[R'])|=|E(G[R])|+|E(W)|-|E(K_{|X|})|.
\end{equation}
For a general $W$-critical extension $R'$, it is possible that the left side of Equation \ref{eq:completeEq} is larger. If we have $|E(G[R'])|=|E(G[R])|+|E(W)|-|E(K_{|X|})|+i$, then we say that the $W$-critical extension is \emph{$i$-incomplete}.
\end{defn}
Thus a $W$-critical extension is complete if the edges from $R$ to $V(W)$ in $G[R']$ correspond to the edges from $X$ to $V(W)-X$ in $W$, and incompleteness comes from three sources.  
First, edges from $R$ to $V(W)$ in $G[R']$ that are not needed in $W$ get counted on the left but never on the right.
Second, if $N_{R}(w)\cap ({\rm color\,\, } \ell)$ is larger than 1 for some $w\in V(W)-R$ and color $\ell$, then $|E(G[R'])|$ counts all of these edges but $|E(W)|$ counts at most one.  Third, if an edge $x_ix_j$ is not used in $W$, then it is not counted by $|E(W)|$ but is subtracted by $|E(K_{|X|})|$.

\begin{lemma}\label{lem:PotentialUnderExtension}
Suppose that $G$ is a $k$-critical graph.  If $R'$ is a $W$-critical extension of $R\subsetneq V(G)$ with core $X$, then
\begin{equation}\label{eq:ExtensionDrop}
\rho_G(R')\leq\rho_G(R)+\rho(W)-\left(\rho(K_{|X|})+\delta T(K_{|X|})-\delta|X|\right).
\end{equation}
\end{lemma}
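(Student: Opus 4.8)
The plan is to expand each term of the inequality using the definitions of $\rho_G(\cdot)$ and $T$, and then reduce everything to a single accounting inequality on vertices, edges, and $T$-values. First I would write out $\rho_G(R')$ in terms of $|R'|$, $|E(G[R'])|$, and $T(G[R'])$, and likewise $\rho_G(R)$, $\rho(W)$, and $\rho(K_{|X|})$. Since $R' = (R \cup V(W)) - X$ is a disjoint union (as vertex sets) of $R$ and $V(W) - X$, we have $|R'| = |R| + |V(W)| - |X| = |R| + |V(W)| - |V(K_{|X|})|$, so the vertex contributions on the two sides of \eqref{eq:ExtensionDrop} match exactly. This is the easy part.

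Next I would handle the edge term. The key fact is that $R'$ is a $W$-critical extension, which by definition means $|E(G[R'])| \geq |E(G[R])| + |E(W)| - |E(K_{|X|})|$ (equality in the complete case, and the left side is larger by the incompleteness amount $i \geq 0$ otherwise). Since the edge term in $\rho$ carries a negative coefficient $-2(k-1) < 0$, this inequality pushes $\rho_G(R')$ \emph{down} relative to the right-hand side of \eqref{eq:ExtensionDrop}, which is the direction we want. So after the vertex terms cancel and the edge terms are bounded, it remains to control the $T$-terms.

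The main obstacle is the $T$-term: I need $\delta T(G[R']) \geq \delta\big(T(G[R]) + T(W) - T(K_{|X|})\big)$, equivalently $T(G[R']) \geq T(G[R]) + T(W) - T(K_{|X|})$, so that the $\delta$-contributions line up in the right direction after collecting signs. Here I would argue as follows. Take an optimal disjoint union $H_W \subseteq W$ of $K_{k-1}$'s and $K_{k-2}$'s achieving $T(W)$; at most $|X|$ of these components can meet $X$ (each such component uses at least one vertex of $X$), and each component that does meet $X$ contributes at most $2$ to $T(W)$, so the components of $H_W$ avoiding $X$ form a subgraph of $G[V(W)-X] \subseteq G[R']$ with $T$-value at least $T(W) - 2|X| \geq T(W) - T(K_{|X|})$ when $|X| \geq 2$, using $T(K_{|X|}) \le 2|X|$ or the exact small-clique values; combining this disjointly with an optimal witness inside $G[R]$ (these live on the disjoint vertex set $R$) gives the bound. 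One must check the degenerate cases $|X| \in \{1, 2\}$ and $|X| = k-1$ separately, and verify the arithmetic with $T(K_{|X|})$: for $|X| = k-1$, $T(K_{k-1}) = 2$, for $|X| = k-2$, $T(K_{k-2}) = 1$, and for $|X| < k-2$, $T(K_{|X|}) = 0$, while a component of $H_W$ meeting $X$ contributes at most $2$ regardless. Once the three term-by-term comparisons (vertices exact, edges in the favorable direction, $T$ in the favorable direction) are assembled, inequality \eqref{eq:ExtensionDrop} follows by adding them, noting that $-\big(\rho(K_{|X|}) + \delta T(K_{|X|}) - \delta|X|\big)$ is precisely the combination that absorbs the leftover vertex and edge contributions of the identified clique $K_{|X|}$ together with its $T$-correction.
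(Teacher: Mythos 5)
Your vertex and edge bookkeeping is correct and matches the paper's proof: the vertex counts on the two sides agree exactly, and the defining property of a $W$-critical extension gives $|E(G[R'])|\geq|E(G[R])|+|E(W)|-|E(K_{|X|})|$, which works in the right direction because the edge coefficient is negative. The gap is in the $T$-term. If you actually expand the right-hand side, the $+\delta T(K_{|X|})$ hidden inside $-\rho(K_{|X|})$ cancels against the explicit $-\delta T(K_{|X|})$, and what survives is $+\delta|X|$; so the inequality you must prove is $T(G[R'])\geq T(G[R])+T(W)-|X|$, not $T(G[R'])\geq T(G[R])+T(W)-T(K_{|X|})$. Your target is strictly stronger whenever $T(K_{|X|})<|X|$ (in particular for $1\leq|X|\leq k-3$ it would demand $T(G[R'])\geq T(G[R])+T(W)$), and it is false in general: if the single core vertex lies in a $K_{k-1}$ or $K_{k-2}$ component of an optimal witness in $W$, nothing guarantees that this component survives intact inside $G[R']$.

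Moreover, the argument you give for your target does not close even on its own terms. Discarding every witness component of $W$ that meets $X$ loses up to $2$ per vertex of $X$, so it only yields $T(W)-2|X|$, and the bridging step ``$T(W)-2|X|\geq T(W)-T(K_{|X|})$ using $T(K_{|X|})\leq 2|X|$'' runs in the wrong direction: $T(K_{|X|})\leq 2\leq 2|X|$ gives $T(W)-2|X|\leq T(W)-T(K_{|X|})$. The repair is the paper's observation: instead of discarding whole components, delete the vertices of $X$ one at a time from an optimal witness in $W$; each deletion turns a $K_{k-1}$ component into a $K_{k-2}$ or destroys a single $K_{k-2}$, so $T$ drops by at most $1$ per deleted vertex, giving $T(W-X)\geq T(W)-|X|$. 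Since $W-X$ sits inside $G[R'-R]$ and is vertex-disjoint from $G[R]$, this yields $T(G[R'])\geq T(G[R])+T(W-X)\geq T(G[R])+T(W)-|X|$, which is exactly the (weaker, correct) inequality the expansion requires.
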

\begin{proof}
Suppose that $G$ is a $k$-critical graph with proper vertex subset $R$ and that $G[R]$ is properly $(k-1)$-colored by $\phi$. Let $R'$ be any $W$-critical extension.
The three elements of a graph that contribute to $\varepsilon$-potential are the vertices, the edges, and $T$.
We note that each side of the inequality in Equation \ref{eq:ExtensionDrop} counts the same number of vertices. For the edges, each side of Equation \ref{eq:ExtensionDrop} counts some edges that the other side does not.
Note that only $\rho_G(R')$ includes edges in $G$ from $R$ to $V(W)-X$, only the right side includes edges in $G_{R,\phi}$ from $X$ to $V(W)-X$, and all other edges are accounted for by both sides. However, each edge from $X$ to $V(W)-X$ corresponds to at least one distinct edge from $R$ to $V(W)-X$, so the negative contribution of edges to the $\varepsilon$-potential is always greater on the left side. In fact, if the $W$-critical extension is $i$-incomplete, then the left side counts exactly $i$ edges more than the left.

Therefore, if Equation \ref{eq:ExtensionDrop} is not satisfied, it can only be because of the contribution of the subgraph-measuring parameter $T$.
We observe that $T(G[R'])\geq T(G[R])+T(W-X)$ and that $T(W-X)\geq T(W)-|X|$ because each $x_i\in X$ could be in at most one subgraph counted by $T(W)$. Therefore, the desired inequality holds.
\renewcommand{\qedsymbol}{$\blacksquare$}
\end{proof}

\begin{coro}\label{cor:ExtensionDrop}
Suppose $G$ is a minimal counterexample to Theorem \ref{thm:Main1}.  If $R'$ is a $W$-critical extension of $R\subsetneq V(G)$ and $R$ is not a clique, then $\rho_G(R')\leq \rho_G(R)-2(k-1)-\delta$.
\end{coro}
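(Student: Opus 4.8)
The plan is to reduce to Lemma \ref{lem:PotentialUnderExtension} and then estimate each term on its right-hand side. I would fix the $(k-1)$-coloring $\phi$ of $G[R]$ and the $k$-critical subgraph $W \subseteq G_{R,\phi}$ used to build $R'$, and write $X$ for the core, so that $1 \le |X| \le k-1$. Since $R$ is not a clique, $G_{R,\phi}$ is smaller than $G$ and hence so is $W$; by minimality of $G$ together with Theorems \ref{thm:Main1} and \ref{thm:Main2} this gives $\rho(W) \le k(k-3) - 2(k-1)$ if $W$ is not $k$-Ore, $\rho(W) = k(k-3) + k\varepsilon - 2\delta$ if $W = K_k$, and (using $\delta = (k-1)\varepsilon$ and the fact that a $k$-Ore graph other than $K_k$ has at least $2k-1$ vertices) $\rho(W) \le k(k-3) - (2k-3)\varepsilon$ if $W$ is $k$-Ore but not $K_k$. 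In every case $\rho(W) \le \rho(K_k)$.

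I would then split on $|X|$. If $|X| \ge 2$, I would substitute $\rho(W) \le \rho(K_k)$ into Lemma \ref{lem:PotentialUnderExtension} and verify, using $T(K_{k-1}) = 2$, $T(K_{k-2}) = 1$, $T(K_\ell) = 0$ for $\ell \le k-3$, and the potential values and lower bounds of Observation \ref{fct:CompletePotential}, that $\rho(K_{|X|}) + \delta T(K_{|X|}) - \delta|X| \ge \rho(K_k) + 2(k-1) + \delta$; this is a routine computation whose only content is that terms like $(k-1)^2\varepsilon$ are dominated by terms like $(k-2)(k-3)$, which is immediate for $k \ge 33$. If $|X| = 1$ and $W \ne K_k$, then I would use the sharper bound $\rho(W) \le k(k-3) - (2k-3)\varepsilon$ and apply Lemma \ref{lem:PotentialUnderExtension} with $\rho(K_1) = k^2 - k - 2 + \varepsilon$ and $T(K_1) = 0$; after substituting $\delta = (k-1)\varepsilon$ this collapses to exactly $\rho_G(R') \le \rho_G(R) - 2(k-1) - \delta$.

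The case I expect to be the main obstacle is $|X| = 1$ with $W = K_k$: here Lemma \ref{lem:PotentialUnderExtension} yields only $\rho_G(R') \le \rho_G(R) - 2(k-1)$, falling short of the goal by exactly $\delta$. The issue is that the proof of that lemma only uses $T(G[R']) \ge T(G[R]) + T(W) - |X| = T(G[R]) + 1$, whereas here $V(W) - X$ induces a $K_{k-1}$ in $G$ that is vertex-disjoint from $R$, so actually $T(G[R']) \ge T(G[R]) + 2$. I would therefore treat this case directly: $|R'| = |R| + (k-1)$; $|E(G[R'])| \ge |E(G[R])| + \binom{k-1}{2} + (k-1)$, since each of the $k-1$ vertices of $V(W) - X$ sends at least one edge to the color class of $R$ identified with the core vertex; and $T(G[R']) \ge T(G[R]) + 2$. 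Substituting these into the definition of $\rho_G$ — the vertex and edge contributions combine to $-2(k-1) + \delta$ and the $T$-term contributes $-2\delta$ — gives $\rho_G(R') \le \rho_G(R) - 2(k-1) - \delta$, as needed.
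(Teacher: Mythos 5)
Your proposal is correct and follows essentially the same route as the paper: both apply Lemma \ref{lem:PotentialUnderExtension}, use minimality together with Theorem \ref{thm:Main2} to reduce to the case $|X|=1$ with $W$ a $k$-Ore graph, and close the $\delta$-gap in the $W=K_k$ case by the same observation you make---namely that the leftover $K_{k-1}=W-X$ lies in $G$ disjoint from $R$, so $T$ increases by $2$ rather than $1$ (the paper phrases this as $T(K_k)=T(K_k-x)$, allowing the $\delta|X|$ term to be dropped). Your direct recomputation of $\rho_G(R')$ in that case and your explicit check of the $|X|\geq 2$ regime are just more detailed renderings of the paper's ``the right side is maximized when $W$ is $k$-Ore and $|X|=1$'' step.
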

\begin{proof}
Let $G$ be a minimal counterexample to Theorem \ref{thm:Main1}.
We aim to maximize the right side of Equation \ref{eq:ExtensionDrop}. 
Because $R$ is not a clique we may assume that $W$ is smaller than $G$.
Therefore $\rho(W)$ follows Theorems \ref{thm:Main1} and \ref{thm:Main2}, depending on whether $W$ is a $k$-Ore graph or not.
It follows that the right side is maximized when $W$ is a $k$-Ore graph and $|X|=1$, so we make those two assumptions as well.
If $W=K_k$, then because $T(W)=T(W-x)$ for $x\in X$ we can ignore the contribution of $\delta|X|$ in Equation 
\ref{eq:ExtensionDrop}.
It follows in this case that 
\[\rho_G(R')\leq\rho_G(R)+(k^2-3k+k\varepsilon-2\delta)-(k^2-k-2+\varepsilon)\]
\[=\rho_G(R)-2(k-1)+(k-1)\varepsilon-2\delta.\]
But recall that $\delta=(k-1)\varepsilon$, so the corollary holds when $W=K_k$.

If $W$ is not $K_k$, then it follows from Theorem \ref{thm:Main2} that
\[\rho_G(R')\leq \rho_G(R)-2(k-1)-\varepsilon+\delta+|V(G)|\varepsilon-\left(2+\frac{|V(G)|-1}{k-1}\right)\delta.\]
Again, because $\delta=(k-1)\varepsilon$ the corollary is proven.
\renewcommand{\qedsymbol}{$\blacksquare$}
\end{proof}


\section{Edge-Additions}\label{sec:EdgeAdd}

The goal of this section is to establish Lemma \ref{lem:EdgeAddition} which says that a subgraph of a minimal counterexample to Theorem \ref{thm:Main1} cannot be within $\frac{k-4}{2}$ edges of being a smaller $k$-critical graph.  This will be used to establish structural results in Section 6.

\begin{defn}
A proper vertex subset $R\subsetneq V(G)$ is \emph{$i$-collapsible in $G$} if for all proper $(k-1)$-colorings $\phi$ of $G[R]$ using color set $C$
\begin{equation}\label{eq:i-collapseEq}
\min_{c\in C}\left|\left\{uv\in E(G)\mid u\in\phi^{-1}(C-c)\cap R\text{ and }v\in V(G)-R\right\}\right|\leq i.
\end{equation}
\end{defn}
That is, a proper vertex subset $R$ is $i$-collapsible if there is a ``majority'' color class in $\phi(\partial_GR)$ which covers all but at most $i$ edges from $R$ into $V(G)-R$. Note that the boundary vertices $\partial_GR$ of a 0-collapsible set receive the same color in every proper $(k-1)$-coloring of $R$.
\begin{defn}
Let $G$ be a $k$-critical graph. An \emph{$(i+1)$-edge-addition in $G$} is a set $S$ of at most $(i+1)$ edges such that there exists a $k$-critical graph $H$ with $S\subseteq E(H)$, $H-S\subseteq G$, and $V(H)\subsetneq V(G)$.
\end{defn}
Thus, a 1-edge-addition is a single edge that, when added to $G$, forms a $k$-critical subgraph on fewer vertices than $|V(G)|$.  For $i$-edge-additions with $i>1$, the size of $S$ is more flexible; this is important for making the subsequent arguments efficiently.  In the proof of Lemma \ref{lem:EdgeAddition} we do specify the number of edges in $S$, but this will be controlled inductively rather than semantically. 

\begin{lemma}\label{lem:No2Cut}
A minimal counterexample to Theorem \ref{thm:Main1} 
does not contain a 2-vertex cutset.
\end{lemma}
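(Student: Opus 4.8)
The plan is to proceed by contradiction: suppose $G$ is a minimal counterexample to Theorem~\ref{thm:Main1} and that $G$ has a $2$-vertex cutset $\{u,v\}$. Write $V(G) = A \cup B$ with $A \cap B = \{u,v\}$, where $A$ and $B$ each induce subgraphs properly smaller than $G$, and there are no edges between $A \setminus \{u,v\}$ and $B \setminus \{u,v\}$. The standard approach for $k$-critical graphs with a $2$-cut (going back to Dirac and used heavily by Kostochka--Yancey) is to analyze the colorings of the two ``sides'' according to whether $u$ and $v$ can receive the same color or must receive different colors. For each side $S \in \{A, B\}$, exactly one of the following holds: either (i) in every proper $(k-1)$-coloring of $G[S]$ the vertices $u,v$ get distinct colors, or (ii) $G[S]$ admits a $(k-1)$-coloring with $u,v$ the same color. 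A short argument using $k$-criticality of $G$ shows that the two sides cannot be of the same ``type'' — if both sides could color $u,v$ alike, or both could color them differently (after permuting colors), we could combine the colorings into a proper $(k-1)$-coloring of $G$, a contradiction. So, up to swapping $A$ and $B$, we may assume $G[A]$ forces $u \ne v$ while $G[B]$ allows $u = v$.

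Next I would build the two natural auxiliary graphs and bound their $\varepsilon$-potentials. On the $A$-side, since $u,v$ always get different colors, the graph $G_A := G[A] + uv$ (add the edge $uv$) is not $(k-1)$-colorable, so it contains a $k$-critical subgraph; more usefully, one works with $G[A]+uv$ directly and applies the $\varepsilon$-potential bounds. On the $B$-side, since $u,v$ can be identified, the graph $G_B := G[B]/\underline{uv}$ is not $(k-1)$-colorable and hence contains a $k$-critical subgraph. Both $G_A$ and $G_B$ are smaller than $G$ (fewer vertices, since each omits the nonempty other side's interior), so by Theorems~\ref{thm:Main1} and~\ref{thm:Main2} — according to whether each is a $k$-Ore graph or not — we get upper bounds on $\rho(G_A)$ and on $\rho(G_B)$ (or on the potentials of the $k$-critical subgraphs they contain, using the critical-extension machinery of Lemma~\ref{lem:PotentialUnderExtension} and Corollary~\ref{cor:ExtensionDrop}). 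The key quantitative point: adding the edge $uv$ costs $2(k-1)$ in potential, while identifying $u$ and $v$ costs roughly $-(k^2-k-2)$ (removing a vertex) but saves on edges; one tracks these together with the $T$-terms.

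Then I would reassemble. Writing $\rho(G)$ in terms of $\rho(G[A])$, $\rho(G[B])$ and the correction for double-counting $u, v$ and the (absent) edge $uv$, namely something like
\[
\rho(G) = \rho(G[A]) + \rho(G[B]) - \rho(\text{overlap}),
\]
where the overlap accounts for the two shared vertices and any shared edges/$T$-contribution, and then substituting the bounds from the $G_A$- and $G_B$-analyses, one should obtain $\rho(G) \le k(k-3) - 2(k-1)$, contradicting that $G$ is a counterexample. The $T$-bookkeeping is where care is needed: a $K_{k-1}$ or $K_{k-2}$ of $G$ lives entirely on one side (since $\{u,v\}$ is a cut and these cliques are $2$-connected for $k \ge 5$), so $T(G) \ge T(G[A]) + T(G[B])$ minus a bounded correction for cliques through $\{u,v\}$, and in the reverse direction $T$ of the auxiliary graphs behaves controllably; but one must also handle the case where one of the auxiliary graphs \emph{is} a $k$-Ore graph, where Theorem~\ref{thm:Main2} gives a weaker ($|V|$-dependent) bound that must be shown to still suffice.

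The main obstacle I expect is the case analysis when one (or both) of the auxiliary graphs $G_A$, $G_B$ turns out to be a $k$-Ore graph: there the potential bound from Theorem~\ref{thm:Main2} is not a clean constant but grows with $|V|$, and one must use the $\delta T$ term together with Lemma~\ref{thm:TboundonkOre} to absorb that growth and still land at $k(k-3) - 2(k-1)$. A secondary subtlety is verifying that the ``forced different / allowed same'' dichotomy is genuinely exclusive between the two sides and that after this normalization $G_A$ and $G_B$ really are smaller than $G$ (this can fail degenerately if one side is just the edge-or-nonedge on $\{u,v\}$ itself, so small-side base cases must be checked directly).
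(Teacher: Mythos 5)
Your overall strategy is the same as the paper's: use the Dirac dichotomy at the $2$-cut, pass to two auxiliary graphs (add the missing edge on one side, identify $u$ and $v$ on the other), and combine potential bounds from Theorems \ref{thm:Main1} and \ref{thm:Main2} with the $T$-bookkeeping of Lemma \ref{lem:TwithkOre}. However, as written the load-bearing step fails because you attach the two operations to the wrong sides. If every proper $(k-1)$-coloring of $G[A]$ gives $u$ and $v$ distinct colors, then $G[A]+uv$ \emph{is} $(k-1)$-colorable (any such coloring already respects the new edge); likewise, if $G[B]$ admits a coloring with $u$ and $v$ equal, then the graph obtained from $G[B]$ by identifying $u$ and $v$ is $(k-1)$-colorable. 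So neither of your auxiliary graphs contains a $k$-critical subgraph, and none of the potential machinery you plan to invoke applies to them. The correct pairing (as in the paper, following Dirac) is the opposite: the side on which every coloring forces $u,v$ to receive the \emph{same} color gets the added edge $uv$, and the side forcing \emph{different} colors is the one in which $u,v$ are identified; with that swap both pieces are non-$(k-1)$-colorable (indeed $k$-critical), and $G$ is an Ore composition of them. The fix is routine, but the argument as stated collapses at this point.

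Two further items your sketch leaves open are handled cleanly in the paper. First, criticality of $G$ forces $u$ and $v$ to have no common neighbour on the ``different'' side (otherwise a proper $(k-1)$-coloring of $G$ minus one of those two edges would extend to all of $G$), which is what makes $G$ literally an Ore composition of the two pieces; this gives the exact identity $\rho(G)=\rho(G_1)+\rho(G_2)-(k^2-3k)-\varepsilon+\delta\bigl(T(G_1)+T(G_2)-T(G)\bigr)$ and also disposes of your ``both pieces are $k$-Ore'' case, since then $G$ would itself be $k$-Ore, contrary to its being a counterexample to Theorem \ref{thm:Main1}; hence one piece is not $k$-Ore and minimality applies to it. Second, the $|V|$-dependent term in Theorem \ref{thm:Main2} that you flag as the main obstacle is not actually one: because $\delta=(k-1)\varepsilon$, the terms $|V(G_2)|\varepsilon$ and $\frac{|V(G_2)|-1}{k-1}\delta$ cancel, and (together with Lemma \ref{lem:TwithkOre} in the $G_2=K_k$ case) the computation lands exactly at $\rho(G)\le k(k-3)-2(k-1)$, the desired contradiction.
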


\begin{proof}
Let $G$ be a minimal counterexample to Theorem \ref{thm:Main1} 
and suppose that there exists a 2-vertex cutset $\{x,y\}$. 
Because $G$ is $k$-critical, by Dirac \cite{DiracCritical}, deleting $\{x,y\}$ leaves behind two components $H_1$ and $H_2$ such that $\tilde{G}_1=G-H_2$ is $(k-1)$-colorable by $\phi$ where $\phi(x)=\phi(y)$ and $\tilde{G}_2=G-H_1$ is $(k-1)$-colorable by $\psi$ where $\psi(x)\neq\psi(y)$.
Moreover, because $G$ is $k$-critical there does not exist a proper $(k-1)$-coloring of $\tilde{G}_1$ where $x$ and $y$ receive different colors.  
This fact prevents $x$ and $y$ from having a common neighbor $z$ in $\tilde{G}_2$, as a proper $(k-1)$-coloring of $G-xz$ would be a contradiction.
Therefore $x$ and $y$ have no common neighbors in $\tilde{G}_2$, which implies that $G$ is an Ore composition of $\tilde{G}_1+xy$ and $\tilde{G}_2/$\ul{$xy$}, which we rename $G_1$ and $G_2$ respectively.

Because $G$ is not a $k$-Ore graph, at most one of $G_1$ and $G_2$ is a $k$-Ore graph.
From the definition of an Ore composition, it follows that $\rho(G)=\rho(G_1)+\rho(G_2)-k^2-3k-\varepsilon+\delta\left(T(G_1)+T(G_2)-T(G)\right)$.
Because the following argument does not rely on the distinction between edge-side or split-side, we may assume without loss of generality that  $G_1$ is not a $k$-Ore graph.
Using Lemma \ref{lem:TwithkOre} and the fact that $G_1$ is smaller than $G$, we have
\[\rho(G)\leq\rho(G_2)-2(k-1)-\varepsilon+2\delta.\]
Thus $G_2$ has higher $\varepsilon$-potential than $G$.
As $G$ is a minimal counterexample to Theorem \ref{thm:Main1} and $G_2$ is smaller than $G$, it follows that $G_2$ must be a $k$-Ore graph.

If $G_2\neq K_k$, then, 
it follows from Theorem \ref{thm:Main2} that $\rho(G)\leq k(k-3)-2(k-1)+(n-1)\left(\varepsilon-\frac{\delta}{k-1}\right)$ where $n=|V(G_2)|$.
If $G_2=K_k$, then Lemma \ref{lem:TwithkOre} 
gives $T(G_1)+T(G_2)-T(G)\leq1$, so it follows that
$\rho(G)\leq k(k-3)+k\varepsilon-2\delta-2(k-1)-\varepsilon+\delta$.
Because $\delta=(k-1)\varepsilon$ both of these inequalities show that $\rho(G)\leq k(k-3)-2(k-1)$, contradicting that $G$ is a minimal counterexample to Theorem \ref{thm:Main1}.
\renewcommand{\qedsymbol}{$\blacksquare$}
\end{proof}

\begin{prop}\label{prp:IncompleteToCollapsible}
Let $G$ be a $k$-critical graph. If $R\subsetneq V(G)$ is a proper vertex subset where all $W$-critical extensions of $R$ are spanning, have core size 1, and are at most $i$-incomplete, then $R$ is $i$-collapsible in $G$.
\end{prop}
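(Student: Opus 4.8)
The plan is to prove the contrapositive: if $R$ is not $i$-collapsible in $G$, then I will exhibit a $W$-critical extension of $R$ that is either non-spanning, or has core size at least $2$, or is at least $(i+1)$-incomplete. So suppose $R$ is not $i$-collapsible; by definition there is a proper $(k-1)$-coloring $\phi$ of $G[R]$ with color set $C$ such that for every color $c\in C$, the number of edges from $\phi^{-1}(C-c)\cap R$ into $V(G)-R$ is strictly more than $i$. Form the auxiliary graph $G_{R,\phi}$ as in Definition \ref{def:colorReduce}, and let $W\subseteq G_{R,\phi}$ be a $k$-critical subgraph; set $X:=V(W)\cap\{x_1,\dots,x_{k-1}\}$, which is nonempty since $G$ is $k$-critical, and let $R'=(R\cup V(W))-X$ be the resulting $W$-critical extension.

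The first case to handle is $|X|=1$, say $X=\{x_c\}$ for some color $c$. Here the idea is that all edges of $G$ from $R$ into $V(W)-X=V(W)\cap(V(G)-R)$ must emanate from the single color class $\phi^{-1}(c)\cap R$, since in $G_{R,\phi}$ the only $x_i$ present in $W$ is $x_c$. I would then compare the incompleteness count: by the three sources of incompleteness listed after Definition on $W$-critical extensions, $|E(G[R'])|$ exceeds $|E(G[R])|+|E(W)|-|E(K_{|X|})|$ by at least the number of edges from $R\setminus\phi^{-1}(c)$ into $V(G)-R$ that happen to land in $V(W)$; but more robustly, I want to argue that the incompleteness is at least the total number of ``wrong-color'' edges relative to $c$, which by the failure of $i$-collapsibility is more than $i$. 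Actually the cleanest route: observe that if $R'$ is spanning (i.e. $R'=V(G)$), then $V(W)-X\supseteq V(G)-R$, so every edge from $R$ to $V(G)-R$ is an edge from $R$ to $V(W)-X$ in $G[R']$; the edges of $W$ incident to $x_c$ account for the vertices of $V(G)-R$ adjacent to $\phi^{-1}(c)\cap R$ but count each such vertex only once (source two) and ignore entirely the edges from $R\setminus\phi^{-1}(c)$ (source one). Hence the incompleteness $i'$ satisfies $i'\ge |\{uv\in E(G): u\in\phi^{-1}(C-c)\cap R, v\in V(G)-R\}| > i$, so $R'$ is at least $(i+1)$-incomplete, contradicting the hypothesis. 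If $R'$ is not spanning, that already contradicts the hypothesis. And if $|X|\ge 2$, that too contradicts the hypothesis directly. In every case the hypothesis of the proposition is violated, so $R$ must be $i$-collapsible.

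The step I expect to be the main obstacle is pinning down precisely the inequality $i' \ge$ (number of wrong-color boundary edges) in the $|X|=1$, spanning case — in particular making sure I am not double-counting and that I correctly handle vertices of $V(G)-R$ adjacent to $\phi^{-1}(c)\cap R$ via multiple edges versus adjacent to other color classes. I would do this by writing $|E(G[R'])| = |E(G[R])| + e_{R\to W\setminus X} + |E(W[V(W)-X])|$ where $e_{R\to W\setminus X}$ counts all $G$-edges from $R$ into $V(W)-X$, and $|E(W)| = |E(W[V(W)-X])| + \deg_W(x_c)$ when $|X|=1$, and then noting $\deg_W(x_c) \le e_{R\to W\setminus X}$ with the deficit being exactly the edges counted by sources one and two; when $R'$ is spanning, $V(W)-X = V(G)-R$ and so the deficit is at least the count of edges from $\phi^{-1}(C-c)\cap R$ into $V(G)-R$. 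Combining with $|E(K_{|X|})| = |E(K_1)| = 0$ gives the incompleteness bound. A final small point to verify is that $\phi$ uses color $c$ at all on $\partial_G R$ (else $x_c$ could not lie in any $k$-critical $W$ that certifies non-colorability in the expected way) — but since we may take $\phi$ to be any coloring witnessing the failure of $i$-collapsibility and we range over all $k$-critical $W\subseteq G_{R,\phi}$, the color $c$ in the definition of collapsibility should be matched to the color with $X=\{x_c\}$, so I would phrase the argument as: pick any such $W$, let $x_c$ be its unique vertex in $\{x_1,\dots,x_{k-1}\}$, and derive $(i+1)$-incompleteness from the failure of the collapsibility inequality for that particular $c$.
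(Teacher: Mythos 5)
Your proposal is correct and is essentially the paper's argument: the key point in both is that, when the extension is spanning with core $\{x_c\}$, every edge from $\phi^{-1}(C-c)\cap R$ to $V(G)-R$ contributes to the incompleteness, so at most $i$ such edges exist. You merely phrase it contrapositively and carry out the edge accounting more explicitly, which the paper leaves implicit.
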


\begin{proof}
Let $G$ be a $k$-critical graph and suppose that we have a proper vertex subset $R$ such that all $W$-critical extensions of $R$ are spanning, have core size 1, and are at most $i$-incomplete.
Then let $\phi$ be an arbitrary proper coloring of $R$ using color set $[k-1]$ and let $R'$ be a $W$-critical extension using $\phi$.
By hypothesis, $R'=V(G)$.
If we permute the colors of $\phi$ so that the vertex in $X$ corresponds to color class 1,
then each edge from $\phi^{-1}(\{2,3,\ldots,k-1\})\cap R$ to $V(G)-R$ contributes to the incompleteness of the $W$-critical extension.
There are at most $i$ such edges so, by definition, $R$ is $i$-collapsible.
\renewcommand{\qedsymbol}{$\blacksquare$}
\end{proof}

\begin{lemma}\label{lem:CollapseToEdgeAddition}
If $G$ is a minimal counterexample to Theorem \ref{thm:Main1} with an $i$-collapsible subset $R\subsetneq V(G)$ for $i\leq(k-3)/2$, then there is an $(i+1)$-edge-addition in $G$.
\end{lemma}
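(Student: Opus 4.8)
The plan is to take an $i$-collapsible set $R$ and run the critical-extension machinery on it to expose a $k$-critical subgraph, then read off from the ``collapsibility'' hypothesis that only a few edges separate that subgraph from living entirely inside $G$. First I would fix a proper $(k-1)$-coloring $\phi$ of $G[R]$ with color set $C=[k-1]$ chosen (by the definition of $i$-collapsibility) so that some color class, say class $1$, covers all but at most $i$ of the edges from $R$ to $V(G)-R$. Form the auxiliary graph $G_{R,\phi}$ of Definition \ref{def:colorReduce}; as noted in the text it is not $(k-1)$-colorable, so it contains a $k$-critical subgraph $W$, and $W$ meets $\{x_1,\dots,x_{k-1}\}$ in a nonempty core $X$. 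The idea is that, because almost every edge leaving $R$ goes to the ``$x_1$'' vertex, $W$ can only use the vertex $x_1$ — the core must be $X=\{x_1\}$ — apart from at most $i$ exceptional edges, and then the subgraph $H$ of $G$ obtained from $W$ by un-identifying $x_1$ (i.e. replacing $x_1$ by one suitably chosen vertex of $\phi^{-1}(1)\cap R$ adjacent to all the neighbors $W$ needs, adding back the at most $i$ missing edges as the set $S$, plus possibly one more edge to restore the clique role of $x_1$) is a $k$-critical graph with $S\subseteq E(H)$, $H-S\subseteq G$, and $|V(H)|<|V(G)|$.

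The key steps, in order: (1) choose $\phi$ realizing the collapsibility bound and build $G_{R,\phi}$, extract $W$ and its core $X$; (2) argue $|X|=1$ — here I would use Corollary \ref{cor:ExtensionDrop}: if $R$ were not a clique then any $W$-critical extension drops potential, and one shows that a core of size $\geq 2$, combined with the fact that only $i\le(k-3)/2$ edges leave $R$ outside the majority class, is incompatible with $W$ being $k$-critical (a $k$-critical graph has minimum degree $k-1$, so each $x_j\in X$ with $j\neq 1$ would need $k-1$ incident edges in $W$, i.e. $k-1$ edges from $\phi^{-1}(j)\cap R$ to $V(G)-R$ or between core vertices, and the clique edges among $X$ account for only $|X|-1$ of these per vertex, forcing at least $k-1-(|X|-1)=k-|X|$ genuine boundary edges in a non-majority class, which exceeds $i$ once $k$ is large); if $R$ is a clique one handles it directly since then $G_{R,\phi}$ behaves like an Ore-type composition and $W$ sits almost inside $G$ already; (3) with $X=\{x_1\}$, pick $v\in\phi^{-1}(1)\cap R$ so that $N_G(v)$ contains as many of the required neighbors of $x_1$ in $W$ as possible, form $H$ by replacing $x_1$ with $v$ inside $W$, and let $S$ be the at-most-$i$ boundary edges in classes $2,\dots,k-1$ together with at most one edge $vx_1'$-type correction, so $|S|\le i+1$; (4) verify $H$ is $k$-critical (it is a subgraph of $G_{R,\phi}$ isomorphic, after the relabelling, to $W$, hence $k$-critical), that $H-S\subseteq G$, and that $V(H)\subseteq (R\cup V(W))-\{x_1\}\subsetneq V(G)$ — the properness of the containment uses that the extension is spanning only in degenerate cases, or more simply that $W$ misses at least one vertex of $G$ since $|V(W)|$ is bounded via the potential drop.

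I expect the main obstacle to be step (2), pinning the core down to a single vertex, and the bookkeeping in step (3) that keeps $|S|$ at most $i+1$ rather than $i$ plus the clique-repair edges among several $x_j$'s. The delicate point is that the ``$+1$'' in $(i+1)$-edge-addition must absorb exactly the single missing clique edge incident to $x_1$ (the edge $x_1x_j$ that $W$ might require but that does not correspond to an edge of $G$ after un-identification), which is why the hypothesis is $i\le (k-3)/2$ rather than something weaker — this is precisely the slack that the inductive control of $|S|$ mentioned in the remark before the lemma is designed to exploit, and it is also where the $k\ge 33$ threshold enters through Corollary \ref{cor:ExtensionDrop}. Once the core is shown to be a single vertex, the rest is the routine un-identification argument, so I would spend most of the write-up on a careful case analysis (clique $R$ versus non-clique $R$, and within the latter, bounding how many edges of $W$ incident to core vertices can fail to be realized in $G$).
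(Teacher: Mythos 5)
There is a genuine gap, and it sits exactly where you predicted trouble but hoped to get by: step (3), the ``un-identification'' of the core vertex. If $X=\{x_1\}$, then $\deg_W(x_1)\ge k-1$ since $W$ is $k$-critical, and the edges of $W$ from $x_1$ to $V(W)-X$ correspond to boundary edges of $G$ issuing from possibly \emph{many different} vertices of the majority color class $\phi^{-1}(1)\cap R$. The hypothesis of $i$-collapsibility only bounds the boundary edges coming from the \emph{other} color classes; it says nothing about how concentrated the majority-class boundary edges are on a single vertex. So when you replace $x_1$ by one representative $v\in\phi^{-1}(1)\cap R$, the number of $W$-neighbors of $x_1$ that $v$ fails to see in $G$ can be on the order of $k-1$ (e.g.\ if the majority class has several boundary vertices each carrying a few outgoing edges), and hence $|S|$ cannot be bounded by $i+1$. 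This cannot be repaired by keeping $W$ itself as $H$ either, since $x_1\notin V(G)$ and the definition of an edge-addition requires $H-S\subseteq G$ with $V(H)\subsetneq V(G)$. A secondary problem is step (2): Corollary \ref{cor:ExtensionDrop} and Proposition \ref{prp:Core1} both require that $R$ not be a clique \emph{and} an upper bound on $\rho_G(R)$, and the present lemma carries no potential hypothesis on $R$ at all (in the paper these potential bounds are supplied later, in the proof of Lemma \ref{lem:EdgeAddition}, which is where Proposition \ref{prp:IncompleteToCollapsible} and this lemma get combined); your alternative degree count for $|X|=1$ also needs to be summed over all non-majority core vertices, since per-vertex it only gives $k-|X|$, which is not bigger than $i$ when $|X|$ is large.

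The paper's actual proof is not a construction from a critical extension but a proof by contradiction with a coloring/recoloring argument. Assuming no $(i+1)$-edge-addition, one weights the boundary vertices $u_1,\dots,u_s$ of $R$ by their number of outgoing edges and splits into two cases. If $w(u_2)+\cdots+w(u_s)\ge i+2$, the argument of Case 2 of Lemma 16 of Kostochka--Yancey shows every coloring of $G[R]$ leaves at least $i+1$ boundary edges outside any single color class, contradicting $i$-collapsibility. Otherwise one adds the star $S=\{u_1u_j: 2\le j\le s\}$ (at most $i+1$ edges, all inside $R$) to $G[R]$; since this is not an edge-addition, $G[R]+S$ has a proper $(k-1)$-coloring $\phi$ in which $u_1$ is alone among boundary vertices in its color class, and then a $(k-1)$-coloring $\psi$ of $G[(V(G)-R)\cup\{u_1\}]$ is recolored (permuting one color with a carefully chosen color $\ell$, which exists because $(k-2)-1-i\ge i\ge 1$) to make $\phi\cup\psi$ a proper $(k-1)$-coloring of $G$, a contradiction. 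In particular, the candidate edge set lives among boundary vertices inside $R$, not on a copy of $W$, and Lemma \ref{lem:No2Cut} is what handles the degenerate $i=0$ subcase; neither the potential function nor the $k\ge 33$ threshold plays any role in this lemma.
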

\begin{proof}
Let $G$ be a minimal counterexample to Theorem \ref{thm:Main1} and let $R\subsetneq V(G)$ be an $i$-collapsible subset for $i\leq (k-3)/2$. 
Suppose, for the sake of contradiction, that there is no $(i+1)$-edge-addition in $G$.
For each $u\in\partial_GR$ let $w(u)=|\{uv\in E(G)\mid v\in V(G)-R\}|$. 
Because $G$ is a $k$-critical graph, $G$ is $(k-1)$-edge-connected and thus $\sum_{u\in\partial_GR}w(u)\geq k-1$.
Let $\partial_GR=\{u_1,\ldots, u_s\}$ and, without loss of generality, assume that $w(u_1)\ge w(u_2)\geq \cdots \geq w(u_s)\geq1$.
\renewcommand{\qedsymbol}{}\end{proof}

\begin{proof}[Case 1] Suppose $w(u_2)+\cdots+w(u_s)\geq i+2$.

This case is the same as Case 2 of Lemma 16 in \cite{kostochkayancey2014}, which shows that, for all proper $(k-1)$-colorings $\phi$ of $G[R]$ using color set $C$ and for any color class $\ell\in C$

\[\sum_{u\in\partial_GR-\phi^{-1}(\ell)}w(u)\geq i+1.\]
However, $R$ is $i$-collapsible so this is a contradiction.
\renewcommand{\qedsymbol}{}
\end{proof}

\begin{proof}[Case 2] Suppose $w(u_2)+\cdots+w(u_s)\leq i+1$.

For $i=0$, this implies that $\{u_1,u_2\}$ is a 2-vertex cutset in $G$ so, by Lemma \ref{lem:No2Cut}, we may assume that $i\geq1$.
Let $S=\{u_1u_j\mid 2\leq j\leq s\}$.
Because we have assumed that there is no $(i+1)$-edge-addition and because $S$ is a set of at most $i+1$ edges, there is a proper coloring $\phi$ of $G[R]+S$ using color set $[k-1]$, and $u_1$ is the unique vertex of $\partial_GR$ in its color class. 
Without loss of generality, let $\phi(u_1)=1$.
Because $i\leq\frac{k-3}{2}$ it follows that $w(u_1)\geq i+1$.
Therefore Equation \ref{eq:i-collapseEq} 
in the definition of $i$-collapsible can only be witnessed by color $1$.
Because $R$ is $i$-collapsible by hypothesis it follows that $w(u_2)+\cdots+w(u_s)\leq i$.

Let $\psi$ be a proper $(k-1)$-coloring of $G[(V(G)-R)\cup\{u_1\}]$ which uses the same colors as $\phi$ such that $\psi(u_1)=1$ and choose $\psi$ so that the number of edges from $\partial_G R$ to $V(G)-R$ which have endpoints colored the same by $\overline{\psi}:=\psi|_{V(G)-R}\cup\phi|_R$ is minimized.
Since $\overline{\psi}$ is not a proper $(k-1)$-coloring of $G$, we may assume that $\phi(u_p)=2$ and one of its neighbors $x$ in $V(G)-R$ also receives color 2.

We will reach a contradiction by relabeling the colors of $\psi$ to interchange $2$ with another color $\ell$ in such a way that $\overline{\psi}$ now gives $u_px$ differently colored endpoints, and so that no edge from $\partial_GR$ to $V(G)-R$ which previously had differently colored endpoints now has endpoints colored the same.  By showing that such an $\ell$ exists, we contradict our initial choice of $\psi$.

Initially, we consider $k-2$ color candidates for $\ell$, obviously needing to remove color 2 as an option.
We also remove color 1 from consideration, so that $\psi(u_1)=\phi(u_1)$ does not change.
Finally, for each of the at most $i$ edges $u_jv$ from $\partial_G R-\{u_1\}$ to $V(G)-R$ we remove $\phi(u_j)$ if $\phi(u_j)\neq2$ and remove $\psi(v)$ if $\phi(u_j)=2$.
This leaves at least $(k-2)-1-i\geq \frac{k-3}{2}\geq i$ choices.
Recall that $i\geq1$, so there does exist a color $\ell$ which contradicts our initial choice of $\psi$, and completes the proof.
\renewcommand{\qedsymbol}{$\blacksquare$}
\end{proof}

\begin{prop}\label{prp:Core1}
Let $G$ be a minimal counterexample to Theorem \ref{thm:Main1}.  If $R\subsetneq V(G)$ is a proper vertex subset that is not a clique and $\rho_G(R)<\rho(G)+k^2-3k+4-\varepsilon$, then every $W$-critical extension of $R$ has core size 1.
\end{prop}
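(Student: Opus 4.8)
The plan is to argue by contradiction: suppose $R$ is not a clique and $\rho_G(R) < \rho(G) + k^2-3k+4-\varepsilon$, but some $W$-critical extension $R'$ of $R$ has core $X$ with $|X| \geq 2$. I want to push $\rho_G(R')$ so low that it violates the known lower bound on the potential of a vertex subset of $G$. First I would invoke Lemma \ref{lem:PotentialUnderExtension}, which gives $\rho_G(R') \leq \rho_G(R) + \rho(W) - \bigl(\rho(K_{|X|}) + \delta T(K_{|X|}) - \delta|X|\bigr)$. Since $R$ is not a clique, $W$ is smaller than $G$, so $\rho(W)$ is bounded above by the right-hand side of Theorem \ref{thm:Main1} or Theorem \ref{thm:Main2}, according to whether $W$ is $k$-Ore; in either case $\rho(W) \leq k(k-3) + k\varepsilon - 2\delta$, matching $\rho(K_k)$, which is the largest value available (the $k$-Ore bound in Theorem \ref{thm:Main2}(2) is strictly smaller because of the extra negative term, and Theorem \ref{thm:Main1} gives $k(k-3)-2(k-1) < k(k-3)+k\varepsilon-2\delta$).

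The heart of the argument is then to show that the quantity $-\bigl(\rho(K_{|X|}) + \delta T(K_{|X|}) - \delta|X|\bigr)$ is substantially more negative when $|X| \geq 2$ than when $|X| = 1$; more precisely, that for $2 \leq |X| \leq k-1$ this term is at most $-(k^2-3k+4) + O(\varepsilon)$ or smaller. Here I would use Observation \ref{fct:CompletePotential}: for $\ell = |X|$ with $1 < \ell < k-1$ we have $\rho(K_\ell) \geq 2k^2-4k-2+2\varepsilon$, and for $\ell = k-1$ we have $\rho(K_{k-1}) = 2k^2-6k+4+(k-1)\varepsilon-2\delta$. The $\delta T(K_{|X|})$ and $\delta|X|$ corrections are $O(k\varepsilon) = O(\delta)$ and hence negligible against the $\Theta(k^2)$ main terms (and one checks $T(K_{k-1})=2$, $T(K_\ell)=0$ for $\ell<k-1$). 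Plugging the worst case $\rho(W) \leq k^2-3k+k\varepsilon-2\delta$ and the smallest relevant value of $\rho(K_\ell)$ into the Lemma \ref{lem:PotentialUnderExtension} bound yields
\[
\rho_G(R') \leq \rho_G(R) + (k^2-3k) - (2k^2-4k-2) + O(\delta) = \rho_G(R) - (k^2-3k) + (k^2 - 4k - 2 + \ldots),
\]
wait — more carefully, $\rho_G(R') \leq \rho_G(R) - (k^2 - k - 2) + O(\delta)$ in the $K_{k-1}$ case and even smaller otherwise. Combined with the hypothesis $\rho_G(R) < \rho(G) + k^2-3k+4-\varepsilon$ and the assumption $\rho(G) = k(k-3) = k^2-3k$ (as $G$ is a counterexample to Theorem \ref{thm:Main1}, or rather $\rho(G) > k(k-3)-2(k-1)$), this forces $\rho_G(R')$ below the universal lower bound $\rho_G(V(G)) = \rho(G)$ applied to subsets, or below $\rho_G(\emptyset)=0$-type bounds — contradicting that $R' \subseteq V(G)$ must have potential at least that of $G$ (or at least nonnegative in the appropriate normalization). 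The precise contradiction target I would use is that $\rho_G(R') \geq \rho(G) - (\text{slack})$ cannot hold; most cleanly, since $R'$ is spanning or not, I would separately note that if $R' = V(G)$ then $\rho_G(R') = \rho(G)$ directly contradicts the strict drop, and if $R' \subsetneq V(G)$ then minimality of $G$ forces $\rho_G(R') \geq \rho(G) - 2(k-1) - \delta$ or similar (cf. Corollary \ref{cor:ExtensionDrop}), again contradicting how far below $\rho(G)$ we have driven it.

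The main obstacle I anticipate is bookkeeping the $\delta$- and $\varepsilon$-order corrections carefully enough to be sure the inequality is strict in the right direction for all $k \geq 33$ and all core sizes $2 \leq |X| \leq k-1$ simultaneously — in particular the boundary case $|X| = k-1$, where $\rho(K_{k-1})$ is smallest among $\ell>1$ and the $T$-correction $\delta T(K_{k-1}) = 2\delta$ is nonzero, is the tightest and must be checked by hand. The case $|X|=k$ cannot occur since $W$ is $k$-critical and hence contains a non-core vertex, but one should also confirm $|X| \leq k-1$ is automatic. Everything else is a routine substitution using Observation \ref{fct:CompletePotential} and the definition $\delta = (k-1)\varepsilon$ with $\varepsilon = 4/(k^3-2k^2+3k)$.
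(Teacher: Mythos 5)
Your route is the same as the paper's, just rearranged: apply Lemma \ref{lem:PotentialUnderExtension}, control $\rho(W)$ via minimality together with Theorems \ref{thm:Main1} and \ref{thm:Main2} (so $\rho(W)\le\rho(K_k)=k^2-3k+k\varepsilon-2\delta$), lower-bound the subtracted clique term via Observation \ref{fct:CompletePotential} with $|X|=k-1$ as the extremal case, and contradict $\rho(G)\le\rho_G(R')$; the paper instead isolates $\rho(W)$, deduces $\rho(W)>\rho(K_k)$, and contradicts Theorem \ref{thm:Main2} and minimality. The gap is precisely in the step you call routine bookkeeping. The threshold $k^2-3k+4-\varepsilon$ in the hypothesis equals $\rho(K_{k-1})-\rho(K_k)$ exactly, so the quadratic main terms cancel identically and nothing is ``negligible against the $\Theta(k^2)$ terms'': the statement is decided entirely at the $\varepsilon,\delta$ order. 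If you carry the correction $\delta T(K_{|X|})-\delta|X|$ from Lemma \ref{lem:PotentialUnderExtension} as stated, then for $|X|=k-1$ the subtracted quantity is $\rho(K_{k-1})+2\delta-(k-1)\delta$, and the chain yields only $\rho_G(R')<\rho(G)+(k-3)\delta$, which is not a contradiction; the paper's proof subtracts $\rho(K_{k-1})$ with no correction, and that sharper subtraction is what makes the inequality close with zero slack. So your plan must either justify discarding (or improving) that correction term in the $|X|=k-1$ case or find slack elsewhere; waving at the $\delta$-terms does not settle it. Relatedly, your intermediate figure ``$\rho_G(R')\le\rho_G(R)-(k^2-k-2)+O(\delta)$ in the $K_{k-1}$ case'' is wrong: it uses the bound $\rho(K_\ell)\ge 2k^2-4k-2+2\varepsilon$, which Observation \ref{fct:CompletePotential} gives only for $1<\ell<k-1$, whereas for $\ell=k-1$ the correct main-term drop is $k^2-3k+4$, i.e.\ exactly the headroom in the hypothesis.

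Two further points. First, your contradiction target must be $\rho_G(R')\ge\rho(G)$, not ``$\rho_G(R')\ge\rho(G)-2(k-1)-\delta$ or similar'': with zero slack the weaker bound proves nothing. The stronger fact is what the paper uses and is available here: if $R'$ is spanning then $\rho_G(R')=\rho(G)$, and if $R'\subsetneq V(G)$ then $R'\supseteq R$ is not a clique and Corollary \ref{cor:ExtensionDrop}, applied to a non-clique proper subset of minimum potential (as the paper notes at the start of the proof of Lemma \ref{lem:EdgeAddition}), gives $\rho_G(R')\ge\rho(G)+2(k-1)+\delta$; note that in the non-spanning case this surplus easily absorbs the $(k-3)\delta$ discrepancy, so the genuinely tight configuration you must handle is a spanning extension with $|X|=k-1$ and $\rho(W)$ near $\rho(K_k)$. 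Second, $|X|\le k-1$ is automatic, since by definition $X\subseteq\{x_1,\ldots,x_{k-1}\}$, so no separate argument is needed there.
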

\begin{proof}
Let $G$ be a minimal counterexample to Theorem \ref{thm:Main1} and let $R\subsetneq V(G)$ be a proper vertex subset that is not a clique such that $\rho_G(R)<\rho(G)+k^2-3k+4-\varepsilon$.  Suppose that $R'$ is a $W$-critical extension with core $X$ where $|X|>1$.
The computation in Corollary \ref{cor:ExtensionDrop} maximized the right side of Equation \ref{eq:ExtensionDrop} by assuming that $|X|=1$.  But if $|X|>1$, then that computation is maximized by assuming $|X|=k-1$ which yields
\[
\rho_G(R')\leq\rho_G(R)+\rho(W)-(2k^2-6k+4+(k-1)\varepsilon-2\delta).
\]
Because $\rho(G)\leq\rho_G(R')$ and using the hypothesis, we get
\[\rho(G)<\rho(G)+k^2-3k+4-\varepsilon+\rho(W)-(2k^2-6k+4+(k-1)\varepsilon-2\delta).
\]
This simplifies to $\rho(W)>k^2-3k+k\varepsilon-2\delta$.  
By Theorem \ref{thm:Main2}, this $\varepsilon$-potential is too high for $W$ to be a $k$-Ore graph.  And because $W$ is smaller than $G$, we reach a contradiction with the minimality of $G$.
\renewcommand{\qedsymbol}{$\blacksquare$}
\end{proof}

\begin{lemma}\label{lem:No1EA}
Let $G$ be a minimal counterexample to Theorem \ref{thm:Main1}. There is no 1-edge-addition in $G$.
\end{lemma}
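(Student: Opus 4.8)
The plan is to argue by contradiction. Suppose $G$ is a minimal counterexample to Theorem \ref{thm:Main1} that admits a $1$-edge-addition: a $k$-critical graph $H$ with an edge $xy\in E(H)$ such that $H-xy\subseteq G$ and $V(H)\subsetneq V(G)$. First note that $xy\notin E(G)$, since otherwise $H$ would be a proper subgraph of $G$ with $\chi(H)=k$, contradicting that $G$ is $k$-critical. Set $R:=V(H)$. Since $G[R]\supseteq H-xy$, the set $R$ is not a clique (the non-edge $xy$ survives) and $R\subsetneq V(G)$; moreover, because any disjoint family of $K_{k-1}$'s and $K_{k-2}$'s in $H-xy$ is still such a family in $G[R]$, we obtain $\rho_G(R)\le\rho(H-xy)=\rho(H)+2(k-1)+\delta\bigl(T(H)-T(H-xy)\bigr)$, which is at most $\rho(H)+2(k-1)+\delta$.

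Next I would feed $R$ into the critical-extension machinery of Section \ref{sec:Ext}. Since $R$ is not a clique, iterating $W$-critical extensions produces a chain $R=R_0\subsetneq R_1\subsetneq\cdots\subsetneq R_m=V(G)$; by Corollary \ref{cor:ExtensionDrop} and the computation in its proof, each step lowers $\rho$ by at least $2(k-1)+\delta$, and by substantially more at any step whose $k$-critical subgraph is not a $k$-Ore graph. If $H$ is not a $k$-Ore graph, then $H$ is smaller than $G$, so $\rho(H)\le k(k-3)-2(k-1)$ by Theorem \ref{thm:Main1}, and already with $m=1$ we get $\rho(G)=\rho_G(R_m)\le\rho_G(R)-(2(k-1)+\delta)\le\rho(H)\le k(k-3)-2(k-1)$, a contradiction. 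So $H$ is a $k$-Ore graph, and Theorem \ref{thm:Main2} improves the estimate to $\rho(H)\le k(k-3)-(k-2)\varepsilon$ (taking the larger of the $K_k$ and non-$K_k$ cases, with $\delta=(k-1)\varepsilon$). Running through the remaining possibilities for the chain — $m\ge 2$, or $m=1$ with a non-$k$-Ore subgraph used — a direct computation with the above inequalities again yields $\rho(G)\le k(k-3)-2(k-1)$ and a contradiction; here the contribution of $T(H)-T(H-xy)$ needs a little care and is absorbed either by the gain from $m\ge 2$ or by the incompleteness of the extension forced by an external edge at a vertex of $V(H)-\{x,y\}$. This leaves only the tight case: $H$ is a $k$-Ore graph, $m=1$, and the single extension uses a $k$-Ore subgraph $W$. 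Here Proposition \ref{prp:Core1} applies ($\rho_G(R)$ is well below its threshold for $k\ge 33$), so the extension has core $\{x_j\}$ of size one, $V(W)=(V(G)-V(H))\cup\{x_j\}$, $W-x_j\subseteq G[V(G)-V(H)]$, and $W$ is $k$-critical and $k$-Ore.

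The heart of the proof — the step I expect to be the main obstacle — is to derive a contradiction from this last configuration. The idea is that the pair $(H,xy)$ together with $(W,x_j)$ should exhibit $G$ as an Ore composition of the two $k$-Ore graphs $H$ (edge-side, replaced edge $xy$) and $W$ (split-side, split vertex $x_j$): the vertex counts agree since $|V(G)|=|V(H)|+|V(W)|-1$, $G[V(H)]$ carries $H-xy$, $G[V(G)-V(H)]$ carries $W-x_j$, and the cut between them should be the neighbourhood $N_W(x_j)$ partitioned between $x$ and $y$. If $N_W(x_j)$ lies entirely in $N_G(x)$ (or in $N_G(y)$), one can identify $x_j$ with $x$ (resp. $y$) to get a $k$-critical proper subgraph of $G$ outright; otherwise the partition is genuine and yields an Ore composition $J$ with $E(J)\subseteq E(G)$ and $V(J)=V(G)$. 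Since $J$ is $k$-Ore, $\chi(J)=k$, so if $E(J)\subsetneq E(G)$ then $J$ is a proper $k$-chromatic subgraph of $G$ (contradiction), while if $E(J)=E(G)$ then $G=J$ is $k$-Ore (contradiction). To justify that the cut really has this shape one uses Lemma \ref{lem:No2Cut} (so that some vertex of $V(H)-\{x,y\}$ has a neighbour outside $V(H)$, which must be accounted for) and the diamond/emerald structure of $k$-Ore graphs (Lemmas \ref{lem:DiamondAwayFromPoint}, \ref{lem:DiamondAwayFromGroup}) to pin down clean complete subgraphs of $H$ and of $W$ away from $x$, $y$ and $x_j$. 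The genuinely delicate point is that the colour class $\phi^{-1}(j)$ collapsed to $x_j$ need not be a single vertex or the pair $\{x,y\}$; the argument must either arrange the colouring $\phi$ so that it is, or dispose of the general "multi-vertex split" case separately (for instance by showing that such a class forces the extension to be strictly incomplete, which supplies the missing potential).
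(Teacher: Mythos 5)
Your opening potential analysis tracks the paper: bounding $\rho_G(R)\le\rho(H)+2(k-1)+\delta$, ruling out non-$k$-Ore $H$, and then using Proposition \ref{prp:Core1} and Corollary \ref{cor:ExtensionDrop} to force every $W$-critical extension to be spanning, complete, and of core size $1$ is exactly how the paper begins. But your endgame has a genuine gap. From ``spanning, complete, core $\{x_j\}$'' you cannot conclude that $G$ is an Ore composition of $H$ (edge-side, replaced edge $xy$) and $W$ (split-side, split vertex $x_j$): that reconstruction needs every edge of $G$ between $R=V(H)$ and $V(G)-R$ to be incident, on the $R$-side, with $x$ or $y$ only. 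What the extension machinery actually gives (via Proposition \ref{prp:IncompleteToCollapsible}) is that $R$ is $0$-collapsible, i.e.\ in every proper $(k-1)$-coloring of $G[R]$ all of $\partial_G R$ lies in one color class --- the class collapsed to $x_j$. That class may contain boundary vertices other than $x$ and $y$, and, contrary to your proposed repair, such vertices do \emph{not} force strict incompleteness: an edge from any vertex of the collapsed class to $V(G)-R$ corresponds to a legitimate edge of $W$ at $x_j$ and is counted once on each side of Equation \ref{eq:completeEq}. So the ``multi-vertex split'' case is not disposable by a potential/incompleteness argument, and it is precisely the case your Ore-composition identification cannot handle (three or more attachment vertices on the $R$-side do not fit the two-way split of $N_W(x_j)$). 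Your other fallback, ``arrange the colouring so the class is $\{x,y\}$,'' is also unavailable: $0$-collapsibility constrains every coloring, but the boundary $\partial_G R$ is a fixed vertex set of $G$, not something a recoloring can shrink.

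The paper resolves this final configuration by a different device, which your proposal is missing: it chooses, among all $1$-edge-additions, one whose $k$-critical graph $H$ has minimum order. Then, using $0$-collapsibility, it splits into two cases on the structure of the $k$-Ore graph $H$. If $H=K_k$, the fact that all boundary vertices share a color in the coloring $\phi(u_j)=j$, $\phi(u_k)=1$ forces $\partial_G R\subseteq\{u_1,u_k\}$, i.e.\ a $2$-vertex cutset, contradicting Lemma \ref{lem:No2Cut}. If $H$ is an Ore composition of $H_1$ and $H_2$ with overlap $\{a,b\}$, then $S$ must lie in $E(H_1)$ (else $\{ab\}$ is a $1$-edge-addition with smaller $H$), and Lemma \ref{lem:No2Cut} supplies boundary vertices $u\in V(H_1)$, $v\in V(H_2)$ outside $\{a,b\}$; a recoloring argument on the two sides then produces a new $1$-edge-addition $\{av\}$ whose critical graph has order at most $|V(H_2)|+1<|V(H)|$, contradicting the minimal-order choice. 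Without that extremal choice and this two-case structural argument (or a correct substitute for the multi-attachment case), your proof does not close.
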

\begin{proof}
Let $G$ be a minimal counterexample to Theorem \ref{thm:Main1} and suppose that there is a 1-edge-addition in $G$.  Among all 1-edge-additions $S$, pick one that minimizes the order of the $k$-critical graph $H\subseteq G+S$.  Let $R=V(H)$ and let $R'$ be a $W$-critical extension of $R$. 
Now $\rho_G(R)\leq \rho(H)+2(k-1)+\delta$ and, because $R$ is not a clique, Corollary \ref{cor:ExtensionDrop} implies that $\rho_G(R')\leq \rho(H)$
It follows that $H$ must be a $k$-Ore graph, as otherwise $H$ is smaller than $G$ and $\rho_G(R')<\rho(G)$ which is not possible.

The $k$-Ore graph with largest $\varepsilon$-potential is $K_k$ so we have
\[\rho_G(R)\leq[k(k-3)+k\varepsilon-2\delta]+2(k-1)+\delta<\rho(G)+2(k-1)+k\varepsilon-\delta+2(k-1).\]
By Proposition \ref{prp:Core1}, and because $4(k-1)+k\varepsilon-\delta<k^2-3k+4-\varepsilon$ for all $k\geq6$, the core of $R'$ has size 1.  Corollary \ref{cor:ExtensionDrop} implies that $\rho_G(R')<\rho(G)+2(k-1)+k\varepsilon-2\delta$.
Note that $R'$ must be complete because otherwise the right side of this inequality would be at least $2(k-1)$ lower, and we would again have $\rho_G(R')<\rho(G)$.  Further, $R'$ must be spanning because otherwise there exists a vertex subset $R''$ such that $\rho_G(R'')<\rho(G)$.  Therefore, $R$ is $0$-collapsible in $G$ by Proposition \ref{prp:IncompleteToCollapsible}.

By definition,  in every proper $(k-1)$-coloring of $G[R]$, each vertex in $\partial_GR$ receives the same color.
If $H$ is $K_k$, then $R=\{u_1,u_2,\ldots,u_k\}$ and we can assume that $\{u_1u_k\}= S$.  We properly $(k-1)$-color $G[R]$ with $\phi$ so that $\phi(u_j)=j$ for $1\le j\le k-1$ and $\phi(u_k)=1$.  Because each vertex in $\partial_GR$ receives the same color, this means that $\{u_1,u_k\}$ is a 2-vertex cutset in $G$ which contradicts Lemma \ref{lem:No2Cut}.  

Therefore $H$ is an Ore composition of two $k$-Ore graphs $H_1$ and $H_2$ with overlap vertices $\{a,b\}$. Note that $S$ must be on the edge-side of the composition---that is $S\subseteq E(H_1)$---because otherwise $\{ab\}$ is a 1-edge-addition that contradicts our choice of $S$.  By Lemma \ref{lem:No2Cut} the set $\{a,b\}$ cannot be a cutset in $G$ so there must be $u,v\in\partial_GR-\{a,b\}$ such that $u\in V(H_1)$ and $v\in V(H_2)\cap G$.
If any proper $(k-1)$-coloring $\phi$ of $G[R]$ has $\phi(u)\notin\{\phi(a),\phi(b)\}$, then we can relabel the colors on $H_1$ so that $\phi(u)\neq\phi(v)$.
This contradicts the fact that $R$ is 0-collapsible.
So without loss of generality, we may assume that $\phi(u)=\phi(a)$.
Let $P=(V(H_2)\cap G)\cup\{a,b\}$.
Now either $\psi(v)=\psi(a)$ in every proper $(k-1)$-coloring $\psi$ of $G[P]$ or we can produce a proper $(k-1)$-coloring of $R$ where $u$ and $v$ receive different colors.
Thus $av$ is a 1-edge-addition that yields a $k$-critical subgraph of order at most $|V(H_2)|+1$ which contradicts our choice of $S$.
\renewcommand{\qedsymbol}{$\blacksquare$}
\end{proof}

\begin{coro}\label{cor:NoDiamond}
Let $G$ be a minimal counterexample to Theorem \ref{thm:Main1}.  For any subgraph $H\subseteq G$, there is no diamond of $H$.  Further, if there is an emerald $D$ of $H$, then there exists a vertex $z\in V(G)-V(D)$ such that $xz\in E(G)$ for each $x\in V(D)$ with $\deg_G(x)=k-1$.  Therefore, there is no emerald of $G$.
\end{coro}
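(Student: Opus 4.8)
The plan is to derive both parts of Corollary \ref{cor:NoDiamond} from Lemma \ref{lem:No1EA}, which forbids 1-edge-additions in a minimal counterexample $G$. The key observation is that a diamond $D = K_k - uv$ in a subgraph $H \subseteq G$ gives an immediate 1-edge-addition: the single edge $uv$, when added to $D$, produces a copy of $K_k$, and since $K_k$ is $k$-critical and (because $D$ has all its internal vertices of degree exactly $k-1$ in $G$, so $D$ is not all of $G$) we have $V(K_k) = V(D) \subsetneq V(G)$, we get a contradiction with Lemma \ref{lem:No1EA}. This handles the first sentence. One small point to check: we need $V(D) \subsetneq V(G)$; this follows since the internal vertices of $D$ have degree $k-1$ in $G$, so if $V(D) = V(G)$ then $G$ would be $K_k$ with possibly the edge $uv$, forcing $G = K_k$ or $G = K_k - uv$, neither of which is a minimal counterexample ($K_k$ is $k$-Ore, and $K_k - uv$ is not $k$-critical).

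For the second sentence, suppose $D$ is an emerald of $H$, i.e.\ $D = K_{k-1}$ with every vertex of $D$ having degree $k-1$ in $G$. Since $G$ is $k$-critical it is $(k-1)$-edge-connected and in particular $2$-connected, so $D \ne V(G)$ and there are edges leaving $V(D)$; in fact each $x \in V(D)$ has exactly one neighbor outside $D$ (as $\deg_G(x) = k-1$ and $x$ already has $k-2$ neighbors inside $D$). I would argue that all these external neighbors coincide at a single vertex $z$: if two vertices $x, x' \in V(D)$ had distinct external neighbors $z \ne z'$, then adding the edge $zz'$ (or, if $zz' \in E(G)$ already, arguing directly) makes $V(D) \cup \{z, z'\}$ contain a $K_k$ minus one edge on $V(D) \cup \{z\}$... more carefully: take any vertex $z$ adjacent to some $x_0 \in V(D)$; then $D + x_0z$-style reasoning shows $V(D) \cup \{z\}$ induces a graph in which adding the missing edges from $z$ to $V(D)$ would create $K_k$, so a single missing edge $zx$ for $x \in V(D)$, $x \ne x_0$, is a 1-edge-addition unless $z$ is already adjacent to all of $V(D)$. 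Since every $x \in V(D)$ has its unique external neighbor, and that neighbor must be $z$, we conclude all of $V(D)$ is joined to the common vertex $z$, giving the claimed structure.

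Finally, the last sentence "there is no emerald of $G$" follows by applying the second sentence with $H = G$: an emerald $D$ of $G$ would be a $K_{k-1}$ all of whose vertices have degree $k-1$; by the previous paragraph there is a vertex $z$ adjacent to all $k-1$ of them, so $V(D) \cup \{z\}$ induces a $K_k$ (all required edges are present), and this $K_k$ is a diamond-free... rather, it directly contains the diamond $K_k - e$ for any edge $e$ incident to $z$ — but we already forbade diamonds, or more cleanly, $V(D) \cup \{z\}$ properly contained in $V(G)$ induces $K_k$, and since the vertices of $V(D)$ have degree $k-1$ this $K_k$ is a genuine proper $k$-critical subgraph, contradicting minimality of $G$ directly (a minimal counterexample cannot properly contain $K_k$, since $K_k$ is not $(k-1)$-colorable). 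The main obstacle is the bookkeeping in the middle paragraph: showing the external neighbors of the emerald's vertices must all be the same vertex $z$, handling the subcase where the candidate edge $zz'$ is already present in $G$, and making sure each invocation of Lemma \ref{lem:No1EA} genuinely produces a $k$-critical subgraph on fewer than $|V(G)|$ vertices.
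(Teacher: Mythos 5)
Your treatment of the diamond case is correct and is the same as the paper's (adding $uv$ to $D$ gives a $K_k$ on $V(D)\subsetneq V(G)$, contradicting Lemma \ref{lem:No1EA}); just note that a diamond of $H$ has its internal degrees equal to $k-1$ in $H$, not in $G$, although your separate check that $V(D)\subsetneq V(G)$ does not need that. The genuine gap is in the emerald case, at the step where you claim that if $z$ is adjacent to some $x_0\in V(D)$ but not to all of $V(D)$, then a single missing edge $zx$ is a 1-edge-addition. This fails whenever $z$ misses two or more vertices of $D$: then $G[V(D)\cup\{z\}]+zx$ is still $(k-1)$-colorable (color $z$ like one of its remaining non-neighbors in $D$), so it contains no $k$-critical subgraph and Lemma \ref{lem:No1EA} gives nothing. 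You cannot repair this by adding all of the missing edges at once, since their number can be as large as $k-2$, out of reach of Lemma \ref{lem:EdgeAddition} -- and in any case that lemma is proved later and its proof uses this corollary, so invoking it here would be circular. A further slip: since $D$ is an emerald of $H$, its vertices have degree $k-1$ in $H$, not necessarily in $G$, so the statement only concerns those $x\in V(D)$ with $\deg_G(x)=k-1$; your assumption that every vertex of $D$ has a unique external neighbor is not available in that generality.

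The idea you abandoned (looking at the edge between two distinct external neighbors) is the right one, but it needs a recoloring argument that your writeup does not supply, and this is exactly how the paper proceeds: take $x,y\in V(D)$ with $\deg_G(x)=\deg_G(y)=k-1$ and external neighbors $a\neq b$. In any proper $(k-1)$-coloring $\phi$ of $G-x$, the $k-1$ neighbors of $x$ receive distinct colors; if $\phi(a)\neq\phi(b)$, then $y$ (whose neighbors are $V(D)-\{y\}$ together with $b$) could be recolored with $\phi(a)$, after which two neighbors of $x$ share a color and $\phi$ extends to $x$, contradicting criticality of $G$. Hence $\phi(a)=\phi(b)$ in every proper $(k-1)$-coloring of $G-x$, so $\{ab\}$ is a 1-edge-addition, contradicting Lemma \ref{lem:No1EA}. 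With that step in place, your final observation (the common neighbor $z$ turns an emerald of $G$ into a $K_k$ inside the $k$-critical graph $G$, which is impossible) is correct and matches the paper.
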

\begin{proof}
Let $G$ be a minimal counterexample to Theorem \ref{thm:Main1}, and let $H$ be a subgraph of $G$.  If $D$ is a diamond of $H$ with endpoints $\{u,v\}$, then $\{uv\}$ is a 1-edge-addition in $G$ which contradicts Lemma \ref{lem:No1EA}.  So we may assume that $D$ is an emerald of $H$.

Note that $\deg_D(x)=k-2$ for each $x\in V(D)$ so each such $x$ is adjacent in $G$ to at least one vertex $V(G)-V(D)$.  If there is at most one $x\in V(D)$ with $\deg_G(x)=k-1$, then the corollary is trivially true.
Suppose then, for the sake of contradiction, that $x,y$ are vertices in $D$ with $\deg_G(x)=\deg_G(y)=k-1$ and $a,b$ are vertices in $V(G)-V(D)$ such that $\{ax,by\}\subseteq E(G)$ and $a\neq b$.  For any proper $(k-1)$-coloring $\phi$ of $G-\{x\}$ it must be the case that the neighbors of $x$ all receive distinct colors.  If we could recolor $y$ using $\phi(a)$, then $\phi$ would extend to all of $G$ which is a contradiction.  Therefore, $\phi(b)$ must be the same color as $\phi(a)$.  But now $\{ab\}$ is a 1-edge-addition in $G$ which contradicts Lemma \ref{lem:No1EA}.

Lastly, if $D$ is an emerald of $G$, then the vertex $z$ guaranteed by the above argument makes a $K_k$ subgraph in $G$ which is not possible in a minimal counterexample to Theorem \ref{thm:Main1}.
\renewcommand{\qedsymbol}{$\blacksquare$}
\end{proof}

\begin{lemma}\label{lem:EdgeAddition}
In a minimal counterexample $G$ to Theorem \ref{thm:Main1}, 
there is no proper vertex subset $R$ where $R$ is not a clique and $\rho_G(R)<\rho(G)+2(i+1)(k-1)+\delta$ for $1\leq i\leq\frac{k-4}{2}.$ Further, $G$ does not have an $i$-edge-addition for $1\leq i\leq\frac{k-4}{2}$.
\end{lemma}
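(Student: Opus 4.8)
The plan is to prove the two assertions together by induction on $i$, for $i=1,\dots,\lfloor(k-4)/2\rfloor$. I will use freely the fact --- standard in the potential framework and already invoked in the proof of Lemma~\ref{lem:No1EA} --- that for the minimal counterexample $G$ every proper subset $R\subsetneq V(G)$ has $\rho_G(R)\geq\rho(G)$, and moreover $\rho_G(R)\geq\rho(G)+2(k-1)+\delta$ whenever $R$ is not a clique (apply Corollary~\ref{cor:ExtensionDrop} to any $W$-critical extension of $R$, noting that a clique extension of size at most $k-1$ has potential above $\rho(G)$ by Observation~\ref{fct:CompletePotential} and that a non-spanning non-clique extension iterates, strictly growing, until it spans). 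This last inequality is the ``$i=0$'' base. The edge-addition assertion for $i=1$ is exactly Lemma~\ref{lem:No1EA}, and the $i=1$ case of the potential assertion falls out of the step below. Within the step at level $i$ I treat the edge-addition assertion first, then the potential assertion, which depends on it.

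For the edge-addition assertion at level $i$, let $S$ with $|S|\le i$ be an $i$-edge-addition whose witnessing $k$-critical graph $H\subseteq G+S$ has minimum order, and put $R=V(H)$. Since $G$ has no $K_k$, $R$ is not a clique; and since $E(H)\setminus S\subseteq E(G[R])$ while $T$ drops by at most one per deleted edge, $\rho_G(R)\le\rho(H)+i\bigl(2(k-1)+\delta\bigr)$. If $H$ is not a $k$-Ore graph, then $\rho(H)\le k(k-3)-2(k-1)$ by Theorem~\ref{thm:Main1}; since $(i-1)\delta\le k\delta<4\le 2(k-1)$ one gets $\rho_G(R)<\rho(G)+2(i+1)(k-1)+\delta$, a threshold below $\rho(G)+k^2-3k+4-\varepsilon$, so by Proposition~\ref{prp:Core1} a $W$-critical extension $R'$ of $R$ has core size $1$ and $\rho_G(R')$ strictly below the level-$(i-1)$ threshold. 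If some such $R'$ is non-spanning this contradicts the inductive hypothesis (or the baseline, when $i=1$); otherwise every extension of $R$ is spanning, Lemma~\ref{lem:PotentialUnderExtension} bounds its incompleteness by $i-1$, Proposition~\ref{prp:IncompleteToCollapsible} makes $R$ at most $(i-1)$-collapsible, and we run the descent below. If $H$ is a $k$-Ore graph, a pure potential count no longer suffices and we argue structurally as in Lemma~\ref{lem:No1EA}: when $H=K_k$, the restricted list of $(k-1)$-colorings of $G[R]=K_k-S$ forces $\partial_GR$ into a bounded set of vertices and hence (with Lemma~\ref{lem:No2Cut} and Corollary~\ref{cor:NoDiamond}) a forbidden small cut; when $H$ is a nontrivial Ore composition $H_1,H_2$ with overlap $\{a,b\}$ one shows $S\subseteq E(H_1)$, uses Lemma~\ref{lem:No2Cut} to locate boundary vertices of $R$ separated by $\{a,b\}$, and produces a strictly smaller $i$-edge-addition (an edge incident to an overlap vertex), contradicting the minimality of $H$.

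For the potential assertion at level $i$, suppose $R\subsetneq V(G)$ is a non-clique with $\rho_G(R)<\rho(G)+2(i+1)(k-1)+\delta$. Since $2(i+1)(k-1)+\delta\le(k-2)(k-1)+\delta<k^2-3k+4-\varepsilon$ for all $i\le(k-4)/2$, Proposition~\ref{prp:Core1} forces every $W$-critical extension of $R$ to have core size $1$. For such an extension $R'$, Corollary~\ref{cor:ExtensionDrop} gives $\rho_G(R')\le\rho_G(R)-2(k-1)-\delta<\rho(G)+2i(k-1)$, and $R'$ is not a clique since $|R'|\ge|R|+(k-1)>k$. If some $R'$ is non-spanning, then $\rho_G(R')<\rho(G)+2((i-1)+1)(k-1)+\delta$, contradicting the inductive hypothesis at level $i-1$ (or the baseline when $i=1$). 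Hence every $W$-critical extension of $R$ is spanning; substituting $\rho_G(R')=\rho(G)$ into Lemma~\ref{lem:PotentialUnderExtension} shows each is at most $(i-1)$-incomplete, so by Proposition~\ref{prp:IncompleteToCollapsible} $R$ is $(i-1)$-collapsible, and since $i-1\le(k-3)/2$ Lemma~\ref{lem:CollapseToEdgeAddition} yields an $i$-edge-addition in $G$ --- contradicting the edge-addition assertion already established at level $i$.

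The main obstacle is the case where the $k$-critical witness of an $i$-edge-addition is a $k$-Ore graph, in particular $K_k$: the $i\delta$ error accumulated when rebuilding $G[R]$ from $H$ exceeds the $O(\varepsilon)$ surplus that $k$-Ore graphs carry in Theorem~\ref{thm:Main2}, and already for $i\ge4$ one has $k\varepsilon<(4-i)\delta$, so no potential inequality can close the case. One must instead rerun the coloring/recoloring descent of Lemma~\ref{lem:No1EA}, now tracking up to $i$ added edges and using the collapsibility parameter $i-1$ rather than $0$. The delicate points are that the recoloring still succeeds --- this is exactly where the hypothesis $i\le\frac{k-4}{2}$ enters, leaving at least $(k-2)-1-(i-1)\ge i$ free colors, as in Case~2 of Lemma~\ref{lem:CollapseToEdgeAddition} --- and that each step of the descent strictly decreases the order of the $k$-critical witness, so the inner induction on that order terminates.
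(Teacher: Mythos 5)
Your bookkeeping half is essentially the paper's: the mutual induction on $i$, together with Proposition \ref{prp:Core1}, Corollary \ref{cor:ExtensionDrop}, Proposition \ref{prp:IncompleteToCollapsible} and Lemma \ref{lem:CollapseToEdgeAddition}, showing that a non-clique subset below the level-$i$ threshold forces spanning, core-size-one, at most $(i-1)$-incomplete extensions and hence an $i$-edge-addition, is exactly the Claim inside the paper's proof. The genuine gap is in the other half, where a contradiction must be extracted from a minimal $i$-edge-addition $S$ with witness $H$. When $H$ is not a $k$-Ore graph you stop at ``$R$ is at most $(i-1)$-collapsible, and we run the descent below'': but $(i-1)$-collapsibility only re-produces an $i$-edge-addition via Lemma \ref{lem:CollapseToEdgeAddition}, which is what you assumed, so no contradiction results; and the ``descent'' you describe is built on the Ore-composition structure of $H$, which is unavailable in this case. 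The paper closes this case quite differently: since not every extension can be at most $(i-2)$-incomplete (that would give an $(i-1)$-edge-addition against the induction), there is an exactly $(i-1)$-incomplete spanning extension, and feeding this into Lemma \ref{lem:PotentialUnderExtension} forces the extension's critical graph $W$ (not $H$) to be a $k$-Ore graph; then Lemma \ref{lem:DiamondAwayFromPoint} produces an emerald of $W$ inside $G$, Corollary \ref{cor:NoDiamond} supplies the vertex $z$, and the at most $i-1$ missing edges to $z$ (bounded by the incompleteness) create a $K_k$, i.e.\ an $(i-1)$-edge-addition, contradicting the induction. Nothing in your proposal plays this role.

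The $k$-Ore cases are also only gestured at, and the gestures do not match arguments that work. For $H=K_k$ the endgame is not a ``forbidden small cut'': one must first refine the $T$-bookkeeping (the witness contains a $K_{k-2}$, so $t\le 1$, and $2\delta>k\varepsilon$) to cap the incompleteness, then use colorings of $K_k-S$ to force $S$ to be a star or a triangle, eliminate the triangle by $(k-1)$-edge-connectivity, and eliminate the star by exhibiting a $1$-edge-addition against Lemma \ref{lem:No1EA}. For $H$ a nontrivial Ore composition, your claim that one can always pass to ``a strictly smaller $i$-edge-addition incident to an overlap vertex'' is only how one shows $S\subseteq E(H_1)$; the actual contradiction again runs through an emerald of $H_2$, the vertex $z$ of Corollary \ref{cor:NoDiamond}, and a count of $|V(D)\cap\partial_GR|$ against the incompleteness of the extension (with a separate recoloring step handling the boundary vertex $u$ of $H_1$). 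These structural steps are the substance of the lemma, and they are missing from the proposal; the recoloring count $(k-2)-1-(i-1)\ge i$ you cite is internal to Lemma \ref{lem:CollapseToEdgeAddition}, which you may simply quote, and it cannot substitute for them.
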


\begin{proof}
Let $G$ be a minimal counterexample to Theorem \ref{thm:Main1}. We will show first that a subset of the given $\varepsilon$-potential implies that there is an $i$-edge-addition in $G$, and then prove inductively that there are no $\frac{k-4}{2}$-edge-additions in $G$.  First note that, by Corollary \ref{cor:ExtensionDrop}, there is no proper subset that is not a clique and has $\varepsilon$-potential less than $\rho(G)+2(k-1)+\delta$.

\begin{claim}\label{clm:BottomOfBucket}
For each $i$ with $1\leq i\leq\frac{k-4}{2}$ if $G$ has no proper vertex subset that is not a clique with $\varepsilon$-potential less than $\rho(G)+2i(k-1)+\delta$, $R$ is a proper vertex subset that is not a clique, and $\rho_G(R)<\rho(G)+2(i+1)(k-1)+\delta$, then every $W$-critical extension of $R$ is spanning, has core size 1, and is at most $(i-1)$-incomplete. Further, there is an $i$-edge-addition in $G$.
\end{claim}

\begin{proof}[Proof of Claim]

Given $i$, where $1\leq i\leq\frac{k-4}{2}$, suppose that $G$ has no proper vertex subset that is not a clique with $\varepsilon$-potential less than $\rho(G)+2i(k-1)+\delta$ and let $R$ be a proper vertex subset that is not a clique and $\rho_G(R)<\rho(G)+2(i+1)(k-1)+\delta$.
For $i\leq\frac{k-4}{2}$, this implies that $\rho_G(R)<\rho(G)+k^2-3k+2+\delta$ so every $W$-critical extension $R'$ has core size 1 by Proposition \ref{prp:Core1}.

By Corollary \ref{cor:ExtensionDrop} 
we also have $\varepsilon$-potential $\rho_G(R')\leq\rho_G(R)-2(k-1)-\delta<\rho(G)+2i(k-1)$. 
By the hypothesis of the claim, $R'$ must be all of $V(G)$.
Also, $R'$ can be at most $(i-1)$-incomplete as otherwise the right side of the inequality would be at least $2i(k-1)$ lower and we would have $\rho_G(R')<\rho(G)$, which is not possible.
By Proposition \ref{prp:IncompleteToCollapsible} and Lemma \ref{lem:CollapseToEdgeAddition}, $R$ is $(i-1)$-collapsible in $G$ and hence there is an $i$-edge-addition in $G$.
\end{proof}

Suppose now that there is an $i$-edge-addition in $G$.  We will prove inductively that any $i$ with $1\leq i\leq\frac{k-4}{2}$ gives a contradiction.  Lemma \ref{lem:No1EA} shows that $i\neq1$.  We may assume that there is no $(i-1)$-edge-addition, so by Claim \ref{clm:BottomOfBucket} there is no proper vertex subset $R$ with $\rho_G(R)<\rho(G)+2i(k-1)+\delta$.
Note that this inductive hypothesis guarantees that $|S|=i$.  Because each edge of $S$ might contribute to $T(H)$, we have
$\rho_G(R)\leq\rho(H)+2i(k-1)+i\delta$.
Among all $i$-edge-additions $S$, we will choose one that minimizes the order of the $k$-critical graph $H\subseteq G+S$. 
\renewcommand{\qedsymbol}{}
\end{proof}
\begin{proof}[Case 1] $H$ is not a $k$-Ore graph.

Because $H$ is smaller than the minimal counterexample $G$, we have $\rho(H)<\rho(G)$.  Thus, we bound the $\varepsilon$-potential of $R$ by $\rho_G(R)<\rho(G)+2i(k-1)+i\delta<\rho(G)+2(i+1)(k-1)+\delta.$  By Claim \ref{clm:BottomOfBucket}, every $W$-critical extension of $R$ is spanning, has core size 1, and is at most $(i-1)$-incomplete. 
Further, there must be some $W$-critical extension $R'$ that is exactly $(i-1)$-incomplete.
Otherwise, Proposition \ref{prp:IncompleteToCollapsible} implies that $R$ is $(i-2)$-collapsible and there is an $(i-1)$-edge-addition in $G$ by Lemma \ref{lem:CollapseToEdgeAddition}.

Choose such a $(i-1)$-incomplete $W$-critical extension $R'$.
Using Lemma \ref{lem:PotentialUnderExtension} and the $(i-1)$-incompleteness of $R'$ we bound the $\varepsilon$-potential as follows:
\[\rho_G(R')<[\rho(G)+2i(k-1)+i\delta] + \rho(W)-2(i-1)(k-1)-(k^2-k-2+\varepsilon-\delta)\]
But $R'=V(G)$, so this implies that $\rho(W)>k^2-3k-(i+1)\delta+\varepsilon$.
Because $W$ is smaller than $G$, this contradicts the minimality of $G$ unless $W$ is a $k$-Ore graph.
By Lemma \ref{lem:DiamondAwayFromPoint}, there is a subgraph $D\subseteq W-X\subseteq G$ which is an emerald of $W$.
Corollary \ref{cor:NoDiamond} gives a vertex $z\in V(G)-V(D)$ such that $xz\in E(G)$ for each $x\in V(D)$ with $\deg_G(x)=k-1$.
Because $R'$ is spanning, the only edges in $G$ that can cause $\deg_G(x)>k-1$ for $x\in V(D)$ are edges from $x$ to $R$ which do not correspond to an edge used in $W$.  These edges contribute to the incompleteness of a $W$-critical extension, so $z$ has at most $(i-1)$ non-neighbors in $D$.  Adding these edges yields a $K_k$, which contradicts the inductive hypothesis.
\renewcommand{\qedsymbol}{}
\end{proof}
\begin{proof}[Case 2] $H$ is a $k$-Ore graph but is not $K_k$

If $H$ is a $k$-Ore graph that is not $K_k$, then $\rho(H)\leq k(k-3)+\varepsilon-2\delta$, and because $k(k-3)<\rho(G)+2(k-1)$ we also have $\rho(R)<\rho(G)+2(i+1)(k-1)+(i-2)\delta+\varepsilon$.  For $i\leq\frac{k-4}{2}$, this upper bound satisfies the hypothesis of Proposition \ref{prp:Core1} so every $W$-critical extension $R'$ has core size 1.
Corollary \ref{cor:ExtensionDrop} 
implies that $\rho_G(R')<\rho(G)+2i(k-1)+(i-3)\delta+\varepsilon$.
For $i=2$, the $W$-critical extension $R'$ is at most 1-incomplete because otherwise the right side is lowered by at least $4(k-1)$ and we get $\rho_G(R')<\rho(G)-\delta+\varepsilon$.  This implies that $\rho_G(R')<\rho(G)$ which is not possible.
Note that for $i>3$ it is possible that $R'$ is $i$-incomplete according to this bound, but cannot be $j$-incomplete for $j\geq i+1$.

First, suppose that $H$ is an Ore composition of two $k$-Ore graphs $H_1$ and $H_2$ with overlap vertices $\{a,b\}$.
Note that all edges of $S$ must be on the edge-side of the composition $H_1$ as otherwise adding $ab$ to $S\cap E(H_1)$ is an $i$-edge-addition that contradicts our choice of $S$.  Thus $H_2-\underline{ab}\subseteq G$.
By Lemma \ref{lem:DiamondAwayFromPoint}, there is a subgraph $D\subseteq H_2-\underline{ab}\subseteq G$ which is an emerald of $H_2$.  Corollary \ref{cor:NoDiamond} gives a vertex $z\in V(G)-V(D)$ such that $xz\in E(G)$ for each $x\in V(D)$ with $\deg_G(x)=k-1$.  
For each $x\in V(D)$, we have $\deg_{H_2}(x)=\deg_{H}(x)=k-1$, so $z\in V(H)$ and either $x\in N_G(z)$ or $x\in\partial_GR$.
But adding the edges $\{yz\mid y\in V(D)\cap\partial_GR\}$ creates a $K_k$ subgraph so by the inductive hypothesis and our choice of $S$ it follows that $|V(D)\cap\partial_GR|\geq i+1$.  

By Lemma \ref{lem:No2Cut}, $\{a,b\}$ is not a cutset so there is some $u\in \partial_GR-\{a,b\}$ such that $u\in V(H_1)$.  Let $\phi$ be a proper $(k-1)$-coloring of $G[R]$, with the colors permuted so that the vertex in the core $X$ of the $W$-critical extension corresponds to color class 1.
Thus each edge from $\phi^{-1}(\{2,3,\ldots,k-1\})\cap R$ to $V(G)-R$ contributes to the incompleteness of the $W$-critical extension.  In the case where $i=2$, $R'$ is at most $1$-incomplete so $|V(D)\cap\partial_GR|\leq2$.  This contradicts our earlier bound on this set.  Therefore we may assume $i>3$ for the rest of this case.  Because $R'$ is at most $i$-incomplete $|V(D)\cap\partial_GR|\leq i+1$.  This implies that $|V(D)\cap\partial_GR|=i+1$, $R'$ is exactly $i$-incomplete, and that $\phi(u)=1$.

If $\phi(u)\notin\{\phi(a),\phi(b)\}$, then we can relabel the colors on $H_1$ only so that $\phi(u)$ is not given to any vertex in $V(D)\cap\partial_GR$.  Because all $W$-critical extensions of $R$ have a core of size 1, this new coloring would give a $W$-critical extension that is $i+1$ incomplete which is a contradiction.  Therefore it must be the case that, for every proper $(k-1)$-coloring of $G[R]$, $\phi(u)\in\{\phi(a),\phi(b)\}$.  This means that $\{ua,ub\}$ is a 2-edge-addition which contradicts the fact that $i>3$.
\renewcommand{\qedsymbol}{}
\end{proof}

\begin{proof}[Case 3] $H$ is $K_k$.

For this case, we further refine our bound $\rho_G(R)\leq\rho(H)+2i(k-1)+t\delta$.  We do not know how many edges of $S$ contribute to $T(H)$, but $t\leq 2$.
The $\varepsilon$-potential of $H$ is $\rho(H)= k(k-3)+k\varepsilon-2\delta$ and so $\rho(R)<\rho(G)+2(i+1)(k-1)+(t-2)\delta+k\varepsilon$.  For $i\leq\frac{k-4}{2}$, this upper bound satisfies the hypothesis of Proposition \ref{prp:Core1} so every $W$-critical extension $R'$ has core size 1.
Corollary \ref{cor:ExtensionDrop} 
implies that $\rho_G(R')<\rho(G)+2i(k-1)+(t-3)\delta+k\varepsilon$.
Note that $R'$ is at most $i$-incomplete, as otherwise $\rho_G(R')<\rho(G)$.

We label $R=V(H)=\{u_1,\ldots,u_k\}$ so that $u_1u_k\in S$ and
properly $(k-1)$-color $G[R]$ with $\phi$ so that $\phi(u_j)=j$ for $1\leq j\leq k-1$ and $\phi(u_k)=1$.
Because $\deg_G(u_j)\geq k-1$ each vertex $u_j\in R$ has at least as many edges in $G$ from $u_j$ to $V(G)-R$ as the number of edges of $S$ incident with $u_j$. 
Any color class that not incident to an edge in $S$ will miss at least $i+1$ endpoints of $S$. So for $R'$ to be at most $i$-incomplete, the vertex in the core $X$ corresponds to color class 1 and every edge in $S$ must incident to at least one of $u_1$ or $u_k$.
If $u_1u_2$ and $u_ku_3$ are both in $S$, then switching the colors on $u_2$ and $u_k$ give a proper $(k-1)$-coloring of $G[R]$ where every color class is not incident to at least one edge in $S$, which is a contradiction.
Thus we may assume that, without loss of generality, either $|S|=3$ and $S$ forms a triangle subgraph or $S$ forms a star subgraph with $u_1$ as the center.  In either case, $t\leq1$ because $G[R]$ has a $K_{k-2}$ subgraph.
Thus the bound given by Corollary \ref{cor:ExtensionDrop} is $\rho_G(R')<\rho(G)+2i(k-1)-2\delta+k\varepsilon$.  With this bound, $R'$ cannot be $i$-incomplete because $2\delta>k\varepsilon$.

If $S$ is a triangle, let $S=\{u_1u_k,u_1u_2,u_ku_2\}$.
Because $R'$ is at most $2$-incomplete, by changing which two vertices of $\{u_1,u_2,u_k\}$ have the same color in a proper $(k-1)$-coloring of $G[R]$, it follows that $\partial_GR=\{u_1,u_2,u_k\}$ and each of these vertices has exactly two edges to $V(G)-R$.
Thus there are 6 edges from $R$ to $V(G)-R$.
However, $i\leq\frac{k-4}{2}$ and $i=3$ imply that $k\geq10$, which is a contradiction as $k$-critical graphs are $(k-1)$-edge-connected.

Suppose instead that $S$ is a star with $u_1$ as the center.
Because $R'$ is at most $(i-1)$-incomplete, every leaf $u_j$ of the star has exactly one neighbor in $V(G)-R$, say $y_j$.
Consider the graph $F=G-\{u_2,\ldots,u_k\}$.
No proper $(k-1)$-coloring $\psi$ of $F$ can be extended to all of $G$, so it follows that $\psi(u_1)=\psi(y_j)$ for each $j$ where $u_j$ is a leaf of $S$.
Thus $u_1y_j$ is a 1-edge-addition in $G$, which contradicts Lemma \ref{lem:No1EA}.
\renewcommand{\qedsymbol}{$\blacksquare$}
\end{proof}


\section{Cloning}\label{sec:Cloning}

Cloning is a reduction operation that will help us understand the structures that exist near vertices of degree $k-1$ in a minimal counterexample to Theorem \ref{thm:Main1}.

\begin{defn}
Let $G$ be a $k$-critical graph with $xy\in E(G)$ such that $\deg_G(x)=k-1$. We define \emph{cloning $x$ with $y$} to mean constructing a new graph $G_{y\rightarrow x}$ such that $V(G_{y\rightarrow x})=V(G)\cup\{\tilde{x}\}-\{y\}$ and $E(G_{y\rightarrow x})=E(G-y)\cup\{\tilde{x}v\mid v\in N_G(x)\}\cup\{\tilde{x}x\}$.
\end{defn}
Thus the vertex $y$ is replaced with the new vertex $\tilde{x}$, which is a copy of $x$.
Below we define the notion of a cluster, which was introduced in \cite{kostochkayancey2014}. 

\begin{defn}
A \emph{cluster} is a maximal set $R\subseteq V(G)$ such that $\deg_G(x)=k-1$ for every $x\in R$ and $N_G[x]=N_G[y]$ for every pair $x,y\in R$.
\end{defn}
Note that if $x\in V(G)$ is in a cluster $C_x$ and $xy\in E(G)$, then in $G_{y\rightarrow x}$ the new vertex $\tilde{x}$ is added to the cluster $C_x$.  Further, if $x'$ is a second vertex in $C_x$, then $G_{y\rightarrow x'}=G_{y\rightarrow x}$.
If $y$ is already in $C_x$, then $G_{y\rightarrow x}=G$. If $y$ is not in $C_x$, then $G_{y\rightarrow x}$ is smaller than $G$ except in the case where $\deg(y)=k-1$ and $G_{y\rightarrow x}$ is $k$-critical. In this case, we further need $y$ to be in a cluster of size at most $|C_x|$ for $G_{y\rightarrow x}$ to be smaller than $G$.

\begin{lemma}\label{lem:NoColorClone}
If $G$ is a $k$-critical graph where $xy\in E(G)$, $x$ is in a cluster of size $s$, and $\deg_G(y)\leq k-2+s$, then $G_{y\rightarrow x}$ is not $(k-1)$-colorable.
\end{lemma}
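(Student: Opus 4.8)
The plan is to argue by contradiction: suppose $\phi$ is a proper $(k-1)$-coloring of $G_{y\rightarrow x}$, and use it to build a proper $(k-1)$-coloring of $G$, contradicting the fact that $G$ is $k$-critical. Let $C = C_x$ be the cluster containing $x$, so $|C| = s$ and every vertex of $C$ has the same closed neighborhood $N_G[x]$. Recall that in $G_{y\rightarrow x}$ the vertex $y$ is deleted and a new vertex $\tilde{x}$ with $N(\tilde{x}) = N_G(x) \cup \{x\}$ is added; note $\tilde{x}$ joins the cluster, so $G_{y\rightarrow x}$ contains a cluster $C' \supseteq (C \setminus \{y\}) \cup \{\tilde{x}\}$ of size at least $s+1$ on the common neighborhood $N_G(x) \setminus \{y\}$ (if $y \in N_G(x)$, which it is). Since the cluster vertices $C' $ are pairwise nonadjacent and all have the same neighborhood, in $\phi$ they may all receive one color; more importantly, the $|N_G(x) \setminus \{y\}| = k-2$ neighbors common to the cluster use exactly $k-2$ colors (they form a clique together with any cluster vertex, so they are rainbow-colored), leaving exactly one color $c_0$ free on the whole cluster.

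First I would restrict $\phi$ to $V(G) \setminus \{y\}$; this is already a proper coloring of $G - y$ since $G - y$ is a subgraph of $G_{y\rightarrow x}$ (the edges among $V(G)\setminus\{y\}$ are preserved, and $\tilde x$ only adds edges incident to the new vertex). It remains to choose a color for $y$. The set of forbidden colors for $y$ is $\phi(N_G(y))$, and $y$ is colorable in $G$ iff $|\phi(N_G(y))| \le k-2$. Since $\deg_G(y) \le k-2+s$, this holds unless the $\le k-2+s$ neighbors of $y$ use all $k-1$ colors with few repetitions — precisely, it fails only if at least $k-1$ neighbors of $y$ receive distinct colors. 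The key point is that among $N_G(y)$ lies essentially all of the cluster $C$ (every vertex of $C$ other than $y$ itself is adjacent to $y$, since $y \in N_G[x] = N_G[x']$ for each $x' \in C$) together with the $k-2$ common neighbors $N_G(x)\setminus\{y\}$. I would now count: the neighbors of $y$ consist of the $k-2$ common-neighbor vertices (using $k-2$ colors), at least $s-1$ cluster vertices from $C\setminus\{y\}$ (all forced to use the single free color $c_0$, contributing only one new color), and at most $\deg_G(y) - (k-2) - (s-1) = \deg_G(y) - k - s + 3 \le 1$ further vertices. Hence $|\phi(N_G(y))| \le (k-2) + 1 + 1 = k$ at worst, and a more careful bookkeeping — the cluster vertices contribute color $c_0$ which might coincide with one of the other colors, and the count of ``extra'' neighbors is at most $\deg_G(y)-(k-2)-(s-1)$ — shows $|\phi(N_G(y))| \le k-2$, so a free color for $y$ exists.

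The main obstacle is making the color count tight enough: naively $\deg_G(y) \le k-2+s$ and a cluster of size $s$ only saves $s-1$ repetitions, which on its face leaves $|\phi(N_G(y))|$ possibly equal to $k-1$. The resolution is that one of the $k-2+s$ slots is $x$ itself, whose color is already counted among the $k-2$ common-neighbor colors is false — rather, $x \notin N_G(x)$, so $x$ is a genuine neighbor of $y$ distinct from the common neighbors, but $x$ lies in $C$ and so also receives color $c_0$. Thus $y$'s neighbors are: the $k-2$ rainbow common neighbors, the $s-1$ remaining cluster vertices $C \setminus \{y, x\}$ — wait, $x \in C$, so $C\setminus\{y\}$ has $s-1$ vertices including $x$, all colored $c_0$ — and at most $(k-2+s) - (k-2) - (s-1) = 1$ other vertex. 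So $|\phi(N_G(y))| \le (k-2) + 1 + 1$, and since $c_0 \notin \phi(N_G(x)\setminus\{y\})$ by construction, the cluster contributes a genuinely new color, giving $\le k$; the final saving comes from observing that the ``one other vertex'' is also a neighbor of every cluster vertex only if it lies in the common neighborhood, which it does not, so it is an honest extra — and then a direct check with $\deg_G(y) \le k-2+s$ forces the extra count to $0$ whenever equality in the color bound would otherwise fail. I would carry this out by splitting on whether $\deg_G(y) \le k-3+s$ (immediate) or $\deg_G(y) = k-2+s$ (the tight case, handled by the rainbow-plus-cluster decomposition above), concluding that $y$ always has an available color, hence $G$ is $(k-1)$-colorable, a contradiction.
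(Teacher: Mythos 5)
Your proposal has a genuine gap, rooted in a misreading of the definition of a cluster. In this paper a cluster is a set of degree-$(k-1)$ vertices with pairwise equal \emph{closed} neighborhoods, so the vertices of $C_x$ (and of $(C_x\cup\{\tilde x\})$ in $G_{y\rightarrow x}$) are pairwise \emph{adjacent}: they form a clique and receive $s$ (respectively $s+1$) distinct colors under $\phi$, not a single common color $c_0$ as you assert. Likewise $N_G(x)\setminus\{y\}$ is not a clique in general (a minimal counterexample contains no $K_k$, and nothing forces the neighborhood of $x$ to be complete), so there is no reason these $k-2$ vertices are rainbow-colored. With those two claims removed your color count collapses, and in fact the bound you are aiming for is simply false: the restriction of $\phi$ to $V(G)\setminus\{y\}$ can place all $k-1$ colors on $N_G(y)$ (already when $s=1$, the $k-2$ non-cluster neighbors of $y$ may use $k-2$ colors all different from $\phi(x)$), so no bookkeeping can force $|\phi(N_G(y))|\le k-2$. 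A telling symptom: your argument never uses the color $\phi(\tilde x)$ of the clone; if it worked, it would show that an arbitrary proper $(k-1)$-coloring of $G-y$ extends to $G$ under the degree hypothesis alone, contradicting the $k$-criticality of $G$ without any reference to $G_{y\rightarrow x}$ — the approach proves too much.

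The missing idea, which is the paper's route, is a recoloring step that exploits $\phi(\tilde x)$. Copy $\phi$ onto $V(G)\setminus\{y\}$ to get a partial coloring $\psi$; since $y$ is adjacent to every vertex of $C_x$, it has at most $\deg_G(y)-s\le k-2$ neighbors outside $C_x$, so one may choose $\psi(y)$ avoiding all of their colors. Because $G$ is $k$-critical this $\psi$ cannot be proper, so the only possible conflict is $\psi(y)=\psi(z)$ for some $z\in C_x$; as cluster vertices are interchangeable, assume $z=x$. Now recolor $x$ with $\phi(\tilde x)$: this color avoids all of $N_G(x)\setminus\{y\}$ and the old color of $x$ (which now sits on $y$), since $\tilde x$ is adjacent in $G_{y\rightarrow x}$ to all of those vertices and to $x$. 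The result is a proper $(k-1)$-coloring of $G$, the desired contradiction.
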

\begin{proof}
Let $G$ be a $k$-critical graph and let $xy\in E(G)$ such that $x$ is in a cluster $C_x$ of size $s$ and $\deg_G(y)\leq k-2+s$.
Suppose, for the sake of contradiction, that $\phi$ is a proper $(k-1)$-coloring of $G_{y\rightarrow x}$. 
Let $\psi$ be the partial proper coloring of $G$ obtained by copying $\phi(u)$ for every $u\in V(G)-\{y\}$.
Because $y$ has at most $k-2$ neighbors outside of $C_x$ we can choose $\psi(y)$ to be a color distinct from these neighbors.  But now $\psi(y)=\psi(z)$ for some vertex $z\in C_x$ because $G$ is $k$-critical.  Without loss of generality, we can assume that $z=x$.
We recolor $x$ so that $\psi(x):=\phi(\tilde{x})$ and now $\psi$ is a proper $(k-1)$-coloring of $G$, which is a contradiction.
\renewcommand{\qedsymbol}{$\blacksquare$}
\end{proof}

\begin{lemma}\label{lem:Clone}
Suppose that $G$ is a minimal counterexample to Theorem \ref{thm:Main1} and $xy\in E(G)$ such that (1) $x$ is in a cluster $C_x$ of size $s$, (2) $\deg_G(y)\leq k-2+s$, and (3) if $y$ is in a cluster $C_y$, then $C_y\neq C_x$ and $|C_y|=t\leq s$. Then for any $k$-critical subgraph $H\subseteq G_{y\rightarrow x}$ either $H$ is a $k$-Ore graph or $H=G_{y\rightarrow x}$. Moreover, $H=G_{y\rightarrow x}$ is only possible if $\deg_G(y)=k-1$.
\end{lemma}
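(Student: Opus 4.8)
Conditions (1) and (2) are exactly the hypotheses of Lemma \ref{lem:NoColorClone}, so $G_{y\rightarrow x}$ is not $(k-1)$-colorable and therefore contains a $k$-critical subgraph $H$; I would first pin down which vertices $H$ must use. Since $G_{y\rightarrow x}-\tilde{x}=G-y$ and $G$ is $k$-critical, $G-y$ is $(k-1)$-colorable, so $H\not\subseteq G_{y\rightarrow x}-\tilde{x}$; hence $\tilde{x}\in V(H)$. As $\deg_{G_{y\rightarrow x}}(\tilde{x})=k-1$ is the minimum degree of any $k$-critical graph, $H$ must contain every edge of $G_{y\rightarrow x}$ at $\tilde{x}$, so $N_{G_{y\rightarrow x}}[\tilde{x}]\subseteq V(H)$; in particular $x$, and indeed all of the cluster $C_x$, lie in $H$, and $C_x\cup\{\tilde{x}\}$ is a cluster of $H$ of size $s+1\ge 2$ whose common ``rest'' is the $(k-s-1)$-set $A\setminus\{y\}$, where $A:=N_G(x)\setminus C_x$. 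If $A\setminus\{y\}$ induces a clique then $C_x\cup\{\tilde{x}\}\cup(A\setminus\{y\})$ is a $K_k$ in $H$, forcing $H=K_k$, which is $k$-Ore; so from now on I assume $A\setminus\{y\}$ is not a clique.

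\textbf{$H$ is smaller than $G$.} Condition (3) (together with the trivial case that $y$ lies in no cluster) is precisely what makes $G_{y\rightarrow x}$ smaller than $G$, as recorded in the remarks preceding the lemma: if $\deg_G(y)\ge k$ then $|E(G_{y\rightarrow x})|<|E(G)|$; and if $\deg_G(y)=k-1$ then $|V|$ and $|E|$ are unchanged while $\tilde{x}$ is absorbed into $C_x$, strictly increasing the number of pairs of vertices with a common closed neighborhood (here $|C_y|\le|C_x|$ ensures any decrease from deleting $y$ out of its own cluster is outweighed). Hence $H\subseteq G_{y\rightarrow x}$ is smaller than $G$, so by minimality of $G$ either $H$ is $k$-Ore --- the first alternative of the lemma --- or $\rho(H)\le k(k-3)-2(k-1)$.

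\textbf{Ruling out a proper, non-$k$-Ore $H$.} Assume $H$ is not $k$-Ore and $H\ne G_{y\rightarrow x}$, and put $R:=V(H)\setminus\{\tilde{x}\}$, a proper subset of $V(G)$ with $x\in R$ and $y\notin R$. If $R$ were a clique, then (as $H=G[R]+\tilde{x}$ with $\tilde{x}$ joined to $k-1$ vertices of $R$, and $G$ has no $K_k$) $k$-criticality would force $|R|=k-1$ and $H=K_k$, contradicting that $H$ is not $k$-Ore; so $R$ is a non-clique. Since $N_{G_{y\rightarrow x}}(\tilde{x})=(N_G(x)\setminus\{y\})\cup\{x\}\subseteq R$, the vertex $x$ has exactly one edge, namely $xy$, leaving $R$, and in every $(k-1)$-coloring of $G[R]$ the $(k-1)$-set $(N_G(x)\setminus\{y\})\cup\{x\}$ is rainbow. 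I would then bound $\rho_G(R)$ from above in terms of $\rho(H)$ (using $H[R]\subseteq G[R]$, $T(H[R])\le T(G[R])$, and $\deg_H(\tilde{x})=k-1$), combine this with $\rho(H)\le k(k-3)-2(k-1)$, and contradict Lemma \ref{lem:EdgeAddition}. The delicate point --- the main obstacle --- is that the crude version of this estimate is just barely too weak; closing it requires exploiting that $\tilde{x}$ is a \emph{clone} of $x$, so that any $K_{k-1}$ or $K_{k-2}$ of $H$ meeting $\tilde{x}$ can be rerouted through $x$ (sharpening the comparison of $T(H)$ with $T(H[R])$), together with the rainbow constraint on $G[R]$ just noted.

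\textbf{The ``Moreover'' statement.} Suppose $H=G_{y\rightarrow x}$. Comparing edge counts gives $\rho_{KY}(G_{y\rightarrow x})=\rho_{KY}(G)+2(k-1)(\deg_G(y)-k+1)$, and since $G$ is $k$-critical but not $k$-Ore, $\rho_{KY}(G)\le k(k-3)-1$ by Theorems \ref{thm:KY} and \ref{thm:KY2}. If $\deg_G(y)\ge k$ and $G_{y\rightarrow x}$ is not $k$-Ore, then comparing $\rho(G_{y\rightarrow x})$ with $\rho(G)$, using $T(G_{y\rightarrow x})\le T(G)+2$ and $\rho(G)>k(k-3)-2(k-1)$, forces $2\delta>2(k-1)$, i.e.\ $\varepsilon>1$, contradicting $\varepsilon\le1$. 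This leaves the case $\deg_G(y)\ge k$ with $G_{y\rightarrow x}$ itself $k$-Ore, which the above bounds (plus Lemma \ref{thm:TboundonkOre}) squeeze into a very rigid configuration; I would eliminate it by a diamond/emerald argument, applying Lemma \ref{lem:DiamondAwayFromPoint} to $\tilde{x}$ to get a diamond or emerald of $G_{y\rightarrow x}$ inside $G-y\subseteq G$ and using the tight degree bookkeeping around $y$ and $\tilde{x}$ to promote it (or a vertex supplied by Corollary \ref{cor:NoDiamond}) to a diamond, emerald, or $K_k$ of $G$ --- impossible in a minimal counterexample. Hence $\deg_G(y)=k-1$.
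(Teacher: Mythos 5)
There is a genuine gap in your central case (a proper, non-$k$-Ore $H$). Your plan is to bound $\rho_G(R)$ for $R=V(H)\setminus\{\tilde{x}\}$ and contradict Lemma \ref{lem:EdgeAddition} directly, but as you yourself note, the numbers do not close: the clean estimate is $\rho_G(R)\le\rho(H)+k^2-3k+4-\varepsilon+\delta$, and with $\rho(H)\le k(k-3)-2(k-1)<\rho(G)$ this only gives $\rho_G(R)<\rho(G)+k^2-3k+4-\varepsilon+\delta$, while Lemma \ref{lem:EdgeAddition} forbids only $\rho_G(R)<\rho(G)+2(i+1)(k-1)+\delta$ with $i\le\frac{k-4}{2}$, i.e.\ a threshold of at most $\rho(G)+k^2-3k+2+\delta$ (and roughly $\rho(G)+k^2-4k+3+\delta$ when $k$ is odd, since $i$ is an integer). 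The shortfall is therefore about $2-\varepsilon$ at best, and about $k$ for odd $k$. The repairs you propose cannot bridge this: rerouting a clique of $H$ through $x$ instead of $\tilde{x}$ only sharpens the $T$-comparison, which is worth at most $\delta=(k-1)\varepsilon\approx 4/k^2$, and the ``rainbow'' observation on $G[R]$ is never converted into any quantitative gain in the potential. So the main case of the lemma is not actually proved. The paper takes a different and essential route here: it does not compare $\rho_G(R)$ to the edge-addition threshold at all, but passes to a $W$-critical extension $R'$ of $R$, applies Lemma \ref{lem:PotentialUnderExtension}, and splits on the core size $|X|$. Each case forces $W$ to be $k$-Ore, and then Lemmas \ref{lem:DiamondAwayFromPoint} and \ref{lem:DiamondAwayFromGroup}, Corollary \ref{cor:NoDiamond}, spanning/completeness and bounded incompleteness (via collapsibility and Lemma \ref{lem:CollapseToEdgeAddition}) produce a forbidden emerald, a $K_k$, or a small edge-addition; the subcase $|X|=k-1$, $W=K_k$ is precisely where the conclusion $H=G_{y\rightarrow x}$ and $\deg_G(y)=k-1$ emerges. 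None of this machinery appears in your argument, and without it (or a genuinely new idea) the case cannot be closed.

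Your treatment of the ``Moreover'' part is partially fine: when $H=G_{y\rightarrow x}$ is not $k$-Ore and $\deg_G(y)\ge k$, the edge-count/potential comparison with $T(G_{y\rightarrow x})\le T(G)+1$ does contradict minimality, and this is essentially how the paper gets $\deg_G(y)=k-1$ (there it falls out of the $|X|=k-1$, $W=K_k$ analysis). But your remaining subcase ($G_{y\rightarrow x}$ itself $k$-Ore with $\deg_G(y)\ge k$) is only a sketch, and the promotion of a diamond or emerald of $G_{y\rightarrow x}$ to one of $G$ is not automatic: for $v\ne x,\tilde{x}$ one has $\deg_G(v)=\deg_{G_{y\rightarrow x}}(v)+[v\in N_G(y)]-[v\in N_G(x)]$, so vertices adjacent to $y$ but not $x$ can gain degree and the degree conditions in the definitions of diamond and emerald can fail; this bookkeeping would need to be carried out, not just asserted.
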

\begin{proof}
Let $G$ be a minimal counterexample to Theorem \ref{thm:Main1} and let $xy\in E(G)$ such that (1) $x$ is in a cluster $C_x$ of size $s$, (2) $\deg_G(y)\leq k-2+s$, and (3) if $y$ is in a cluster $C_y$, then $C_y\neq C_x$ and $|C_y|=t\leq s$.
Let $G_{y\rightarrow x}$ be the graph obtained by cloning $x$ with $y$. By Lemma \ref{lem:NoColorClone} $G_{y\rightarrow x}$ is not $(k-1)$-colorable, so there exists a $k$-critical subgraph $H\subseteq G_{y\rightarrow x}$. Note that condition (3) ensures that $H$ is smaller than $G$.
Suppose that $H$ is not a $k$-Ore graph; we will see that this either leads to contradiction, or implies that $\deg_G(y)=k-1$ and $H=G_{y\rightarrow x}$. 

We let $R=V(H)-\{\tilde{x}\}$ and note that $R$ is not a clique because $H$ is not a $k$-Ore graph. One can compute that $\rho_G(R)\leq\rho(H)+k^2-3k+4-\varepsilon+\delta$.
Let $R'$ be a $W$-critical extension of $R$ with core $X$. Because $\rho(G)\leq\rho_G(R')$ and because $H$ is smaller than $G$ but is not a $k$-Ore graph, Lemma \ref{lem:PotentialUnderExtension} 
yields the inequality
\begin{equation}\label{eq:Clone1}
[\rho(K_{|X|})+\delta T(K_{|X|})-\delta|X|]-k^2+3k-4+\varepsilon-\delta<\rho(W).
\end{equation}
For $1<|X|<k-1$, this gives $W$ an $\varepsilon$-potential that is too high for $W$ to be a $k$-Ore graph by Theorem \ref{thm:Main2}.  Because $W$ is smaller than $G$, this contradicts the minimality of $G$.

Suppose now that $|X|=k-1$. Then Observation \ref{fct:CompletePotential} implies that $k^2-3k+k\varepsilon-k\delta<\rho(W)$, which is a contradiction unless $W$ is a $k$-Ore graph. When $W$ is a $k$-Ore graph, Equation \ref{eq:Clone1} is almost tight; more specifically, the difference between the two sides is less than $2(k-1)$. Therefore, it follows that $R'$ is a spanning and complete $W$-critical extension, because otherwise the right side is lowered by at least $2(k-1)$.

If $W$ is not $K_k$, Lemma \ref{lem:DiamondAwayFromGroup} implies that $D\subseteq W$ is a diamond or emerald of $W$ disjoint from $X$. Because $W-X\subseteq G$, Corollary \ref{cor:NoDiamond} implies that $D$ is an emerald of $W$.  But $R'$ is a spanning and complete extension, so $\deg_G(x)=k-1$ for each $x\in V(D)$. Thus $D$ is an emerald of $G$, which contradicts Corollary \ref{cor:NoDiamond}.  Therefore we may assume that $W$ is $K_k$, and it follows that $V(G)=R\cup\{y\}$. Thus $T(H)$ and $T(G)$ can differ by at most 1, and it must be that $|E(H)|=|E(G)|$. This implies that $\deg_G(y)=k-1$ and $H=G_{y\rightarrow x}$.

Suppose instead that $|X|=1$. We claim that $R'$ must be a spanning $W$-critical extension that is at most $\frac{k-4}{2}$-incomplete. For an $i$-incomplete $W$-critical extension, we have
\begin{equation}\label{eq:Clone2}
\rho(G)\leq\rho_G(R')\leq\rho(H)+\rho(W)-2k+6-2i(k-1)-2\varepsilon+2\delta,
\end{equation}
which because $H$ is smaller than $G$ yields
\begin{equation}\label{eq:Clone3}
2k-6+2i(k-1)+2\varepsilon-2\delta<\rho(W).
\end{equation}
Lemma \ref{lem:EdgeAddition} implies that any proper vertex subset that is not a clique must have $\varepsilon$-potential at least $\rho(G)+k^2-3k+2+\delta$.  If $R'$ is not spanning, the left side of Equation \ref{eq:Clone2}, and subsequently Equation \ref{eq:Clone3}, can be increased by $k^2-3k+2+\delta$. Thus $k^2-k-4+2\varepsilon-\delta>\rho(W)$, which  contradicts either Theorem \ref{thm:Main2} or the minimality of $G$. So we may assume that $R'$ is spanning. 
If $i\geq \frac{k-3}{2}$, then we get $k^2-2k-3+2\varepsilon-2\delta<\rho(W)$ which also contradicts either
Theorem \ref{thm:Main2} or the minimality of $G$. Therefore $R'$ is spanning and is at most $\frac{k-4}{2}$-incomplete.
In fact, there must be a particular $W$-critical extension $R'$ that is $\frac{k-4}{2}$-incomplete or $\frac{k-5}{2}$-incomplete, as otherwise $R$ is $\frac{k-6}{2}$-collapsible and then there exists a $\frac{k-4}{2}$-edge-addition in $G$ by Proposition \ref{prp:IncompleteToCollapsible} and Lemma \ref{lem:CollapseToEdgeAddition}, which contradicts Lemma \ref{lem:EdgeAddition}.

We choose such an $i$-incomplete $W$-critical extension $R'$ for $i\in\left\{\frac{k-4}{2},\frac{k-5}{2}\right\}$.  Now Equation \ref{eq:Clone3} becomes $k^2-4k-1+2\varepsilon-2\delta<\rho(W)$.  This $\varepsilon$-potential does not match the conclusion of Theorem \ref{thm:Main1} so $W$ must be a $k$-Ore graph by the minimality of $G$.
As $W$ is a $k$-Ore graph, Lemma \ref{lem:DiamondAwayFromPoint} implies that $D\subseteq W$ is a diamond or emerald of $W$ disjoint from $X$. 
Corollary implies that \ref{cor:NoDiamond} $D$ is an emerald of $W$ and there must exist a vertex $z$ in $V(G)-V(D)$ such that $xz\in E(G)$ for each $x\in V(D)$ with $\deg_G(x)=k-1$.
However, $R'$ is at most $\frac{k-4}{2}$-incomplete, so there are at most $\frac{k-4}{2}$ vertices of $D$ that are not adjacent to $z$.  The set of edges from these vertices to $z$ is a $\frac{k-4}{2}$-edge-addition, which contradicts Lemma \ref{lem:EdgeAddition}.
\renewcommand{\qedsymbol}{$\blacksquare$}
\end{proof}

To talk about the different outcomes of a cloning operation, we introduce the following terminology.
\begin{defn}
A \emph{gadget}, $H^\circ$, is a graph obtained from a $k$-Ore graph $H$ by deleting a vertex $x$ of degree $k-1$ in a cluster of size at least $2$. Note that the requirement of cluster size prevents $x$ from being an overlap vertex of an Ore composition. A \emph{gadget of $G$} is a subgraph of $G$ that is a gadget.
\end{defn}

\begin{defn}
A \emph{key vertex} of a $k$-Ore graph $H$ is a vertex $x$ such that, whenever $H$ is an Ore composition of two graphs $H_1$ and $H_2$ with overlap vertices $\{a,b\}$, $x\in V(H_1)-\{a,b\}$.  That is, $x$ is on the edge-side of the composition and is not an overlap vertex.  A \emph{key vertex} of a gadget is a vertex which is a key vertex of the corresponding $k$-Ore graph.
\end{defn}

\begin{coro}\label{cor:CloneName}
Suppose that $G$ is a minimal counterexample to Theorem \ref{thm:Main1} and $xy\in E(G)$ such that (1) $x$ is in a cluster $C_x$ of size $s$, (2) $\deg_G(y)\leq k-2+s$, and (3) if $y$ is in a cluster $C_y$, then $C_y\neq C_x$ and $|C_y|=t\leq s$.
Then $x$ is a key vertex of a gadget of $G$, or $x$ is in a $K_{k-3}$ subgraph of $G$. Moreover, the latter is only possible if $\deg_G(y)=k-1$ and $y$ is not in the $K_{k-3}$ subgraph.
\end{coro}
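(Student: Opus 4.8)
The plan is to feed the hypotheses into Lemma \ref{lem:Clone} and then unpack what each of its two conclusions means structurally for $x$. Apply Lemma \ref{lem:Clone} to the edge $xy$: the conditions (1)--(3) are exactly those of the lemma, so any $k$-critical subgraph $H\subseteq G_{y\rightarrow x}$ is either a $k$-Ore graph or equals $G_{y\rightarrow x}$, with the latter only when $\deg_G(y)=k-1$. Fix such an $H$ (one exists since $G_{y\rightarrow x}$ is not $(k-1)$-colorable by Lemma \ref{lem:NoColorClone}), and note that $\tilde x\in V(H)$: otherwise $H\subseteq G$ contradicts that $G$ is $k$-critical. The key observation is that $\tilde x$ is a clone of $x$, so $N_{G_{y\rightarrow x}}[\tilde x]=N_{G_{y\rightarrow x}}[x]$ and hence $\tilde x$ and $x$ lie in a common cluster of $G_{y\rightarrow x}$ of size $\geq 2$; in particular $\deg_{G_{y\rightarrow x}}(\tilde x)=\deg_{G_{y\rightarrow x}}(x)$, and whatever we deduce about $\tilde x$'s position in $H$ transfers to $x$ in $G$ after deleting $\tilde x$.

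Now split on the outcome of Lemma \ref{lem:Clone}. \textbf{Case $H$ is a $k$-Ore graph.} Since $\tilde x$ is in a cluster of $H$ of size at least $2$, it is not an overlap vertex of any Ore composition of $H$, so $H^\circ:=H-\tilde x$ is a gadget, and it is a subgraph of $G$ because $H-\tilde x\subseteq G_{y\rightarrow x}-\tilde x = G-y\subseteq G$. I claim $x$ is a key vertex of this gadget: in any Ore composition of $H$ into $H_1,H_2$ with overlap vertices $\{a,b\}$, the cluster of $H$ containing $\tilde x$ must lie entirely on the edge-side minus the overlap vertices --- an overlap vertex has a neighbor on each side and cannot share a closed neighborhood with a vertex of degree $k-1$ that is internal to one side, and a vertex of a cluster of size $\geq 2$ cannot be an overlap vertex (as noted in the gadget definition). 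Since $\tilde x$ and $x$ are in the same cluster, $x\in V(H_1)-\{a,b\}$ as well, so $x$ is a key vertex of $H$, hence of the gadget $H^\circ$. \textbf{Case $H=G_{y\rightarrow x}$ and $\deg_G(y)=k-1$.} Here I would argue that $x$ lies in a $K_{k-3}$ of $G$ and $y$ is outside it. In $G_{y\rightarrow x}=H$ the vertices $x,\tilde x$ form a cluster, and since $\deg_G(y)=k-1$ with $xy\in E(G)$, the structural restrictions on $G$ near a degree-$(k-1)$ vertex adjacent to another degree-$(k-1)$ vertex (together with Corollary \ref{cor:NoDiamond}, which forbids diamonds and emeralds in $G$, and Lemma \ref{lem:EdgeAddition}, which keeps subgraphs far from being $k$-critical) force the common neighborhood $N_G(x)\cap N_G(y)$ to contain a large clique; a degree count ($\deg_G(x)=k-1$, $x\sim y$, and $x,y$ share all but a bounded number of neighbors) pins this down to a $K_{k-3}$ on $x$ together with $k-4$ common neighbors, with $y$ necessarily not among them since $y$ is not adjacent to itself.

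The main obstacle will be the second case: translating "$H=G_{y\rightarrow x}$" into the precise statement "$x$ is in a $K_{k-3}$ subgraph of $G$ with $y$ outside it." The first case is essentially bookkeeping about where a cluster can sit relative to Ore compositions, but the second requires actually exhibiting the clique, and the bound $k-3$ (rather than something weaker) must come out of carefully combining the degree constraint $\deg_G(y)=k-1$, the absence of diamonds/emeralds from Corollary \ref{cor:NoDiamond}, and the edge-addition bounds of Lemma \ref{lem:EdgeAddition}. I expect the cleanest route is to analyze the coloring obstruction directly: $G_{y\rightarrow x}$ being $k$-critical means every $(k-1)$-coloring of $G-y-x$ that extends to $x$ fails to extend to $\tilde x$, which is a rigidity condition on $N_G(x)$ that should be recognizable as forcing a near-complete neighborhood, and then one checks $y\notin$ that clique directly from $y\not\sim y$. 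I would be prepared for this step to need its own short lemma or to lean on a known claim from \cite{kostochkayancey2014} about clusters and complete subgraphs.
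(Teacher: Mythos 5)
Your reduction to Lemma \ref{lem:Clone} and your handling of the gadget setup are fine, but both cases have real problems. In the $k$-Ore case, your argument only shows that $x$ and $\tilde x$ cannot be overlap vertices of a decomposition of $H$; being a \emph{key} vertex also requires $x\in V(H_1)-\{a,b\}$ for \emph{every} Ore decomposition, i.e.\ that the cluster sits in the interior of the edge-side, and nothing you say rules out the cluster lying in the interior of the split-side $H_2$. The paper closes exactly this hole with Lemma \ref{lem:No1EA}: if $x,\tilde x\in V(H_2)-\{a,b\}$ for some decomposition, then the edge-side $H_1$ is a $k$-critical ($k$-Ore) graph with $ab\in E(H_1)$, $H_1-ab\subseteq G$ (it avoids $\tilde x$, hence lives in $G-y$) and $V(H_1)\subsetneq V(G)$ (it misses $x$), so $\{ab\}$ would be a 1-edge-addition in $G$, a contradiction. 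Without this step the conclusion ``$x$ is a key vertex'' is unjustified.

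The second case is the bigger gap, and you acknowledge it is open; worse, the route you sketch cannot work. You propose forcing a large clique inside $N_G(x)\cap N_G(y)$, but Lemma \ref{lem:AdjacentBound} (applicable since $y$ is in a different cluster, so $N_G[x]\not\subseteq N_G[y]$) gives $k-1=\deg_G(y)\geq|N_G(x)\cap N_G(y)|+1+\frac{k-3}{2}$, i.e.\ $|N_G(x)\cap N_G(y)|\leq\frac{k-1}{2}$, which is far too small to contain $K_{k-4}$ for $k\geq33$; the $K_{k-3}$ in the conclusion avoids $y$ but its vertices are mostly \emph{not} common neighbors of $x$ and $y$. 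The paper's actual mechanism is a potential count, not a coloring-rigidity analysis: when $H=G_{y\rightarrow x}$, hypothesis (3) ($s\geq t$) makes $H$ smaller than $G$, and since $H$ is not $k$-Ore, minimality of $G$ gives $\rho(H)\leq k(k-3)-2(k-1)<\rho(G)$; but $\deg_G(y)=k-1$ forces $|V(H)|=|V(G)|$ and $|E(H)|=|E(G)|$, so the strict drop in $\varepsilon$-potential can only come from the $-\delta T$ term, whence $T(G_{y\rightarrow x})>T(G)$. Thus some $K_{k-1}$ or $K_{k-2}$ of $G_{y\rightarrow x}$ must use the new vertex $\tilde x$; deleting $\tilde x$ (and adjoining $x$ if it is not already present, which is possible since the remaining vertices lie in $N_G(x)$) yields a $K_{k-3}$ of $G$ containing $x$ and contained in $N_G[x]\setminus\{y\}$, so it avoids $y$. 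You should replace your Case 2 sketch with this argument.
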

\begin{proof}
By Lemma \ref{lem:Clone} there is a $k$-critical graph $H\subseteq G_{y\rightarrow x}$. If $H$ is a $k$-Ore graph, then $H-\tilde{x}$ is a gadget of $G$.
Suppose that $H$ is an Ore composition of two $k$-Ore graphs $H_1$ and $H_2$ with overlap vertices $\{a,b\}$.
If $x\in V(H_2)$ or if $x\in\{a,b\}$, then $ab$ is a 1-edge-addition in $G$, which contradicts Lemma \ref{lem:No1EA}.
Because every vertex of $K_k$ is trivially a key vertex, it follows that $x$ is a key vertex of $H-\tilde{x}$.

If $H$ is not a $k$-Ore graph, then $\deg_G(y)=k-1$ by Lemma \ref{lem:Clone} and thus $H$ and $G$ have the same number of edges. However, $H$ is smaller than $G$ because $s\geq t$. Thus $\rho(H)<\rho(G)$ which is only possible if adding $\tilde{x}$ creates either a $K_{k-2}$ or $K_{k-1}$ subgraph of $H$ that doesn't exist in $G$. In either case, $x$ is in a $K_{k-3}$ subgraph of $G$ that does not contain $y$.
\renewcommand{\qedsymbol}{$\blacksquare$}
\end{proof}

To aid with discharging, it is useful to classify the vertices of degree $k-1$ in a minimal counterexample to Theorem \ref{thm:Main1} into three distinct groups.
\begin{defn}
Let $G$ be a minimal counterexample to Theorem \ref{thm:Main1} and suppose that $x\in V(G)$ is a vertex of degree $k-1$. Let $C_x$ be the cluster containing $x$; note that every vertex withing a given cluster is classified into the same group.
\begin{itemize}
\item If $x$ is a key vertex of a gadget or is in a $K_{k-3}$ subgraph, then we call $x$ a \emph{structure-vertex}.
\item If $x$ is not a structure-vertex and is adjacent to a vertex $y$ which belongs to a distinct cluster $C_y$, then we call $x$ a \emph{near-vertex}. Note that Corollary \ref{cor:CloneName} implies that $y$ is necessarily a structure-vertex and that $|C_x|<|C_y|$.
\item If $x$ is not a structure-vertex and every neighbor of $x$ with degree $k-1$ is in $C_x$, then we call $x$ a \emph{lone-vertex}. Note that $|C_x|\leq k-4$, or $x$ would be a structure-vertex.
\end{itemize}
\end{defn}

\begin{lemma}
Suppose that $G$ is a minimal counterexample to Theorem \ref{thm:Main1} and that $x$ is a structure-vertex in $G$. Then $x$ cannot be adjacent to two near-vertices $y$ and $z$ with $C_y\neq C_z$.
\end{lemma}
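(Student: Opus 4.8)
The plan is to assume for contradiction that $x$ is adjacent to near-vertices $y\in C_y$ and $z\in C_z$ with $C_y\neq C_z$, and to derive a contradiction by cloning $x$ with one of its near-neighbours. First I would record the constraints: $\deg_G(y)=\deg_G(z)=k-1$; since $x$ is a structure-vertex it is classified into a different group than $y$ and $z$, so $C_x,C_y,C_z$ are pairwise distinct; and the note following the definition of near-vertex (an application of Corollary \ref{cor:CloneName}) gives $|C_y|,|C_z|<|C_x|=:s$, hence $s\geq 2$.

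Next I would clone $x$ with $y$, forming $G_{y\to x}$ with new vertex $\tilde x$. Hypotheses (1)--(3) of Lemma \ref{lem:Clone} hold (using $|C_y|\leq s$), so there is a $k$-critical $H\subseteq G_{y\to x}$ that is a $k$-Ore graph or equals $G_{y\to x}$, the latter only if $\deg_G(y)=k-1$. Since $G-y$ is $(k-1)$-colourable we have $\tilde x\in V(H)$, and because $\deg_{G_{y\to x}}(\tilde x)=k-1$ this forces $x,z\in V(H)$ with $\deg_H(x)=\deg_H(\tilde x)=\deg_H(z)=k-1$, with $x\tilde x,xz,\tilde xz\in E(H)$, with $N_H[x]=N_H[\tilde x]=:S$ of size $k$, and with $N_G(x)=\{y\}\cup(S\setminus\{x,\tilde x\})$; in particular $y$ is the only neighbour of $x$ outside $V(H)\setminus\{\tilde x\}$.

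In the main case $H$ is a $k$-Ore graph; then $H^{\circ}:=H-\tilde x$ is a gadget of $G$ and $x$ is a key vertex of $H$, as in Corollary \ref{cor:CloneName}. A short neighbourhood computation transfers the cluster structure: every $x'\in C_x\setminus\{x\}$ satisfies $x'\in S\setminus\{x,\tilde x\}$, $N_H[x']=S$ and $x'\in N_G(y)\cap N_G(z)$, so $C_x\cup\{y\}$ and $C_x\cup\{z\}$ are cliques, while $C_y\setminus\{y\}$ and $C_z\setminus\{z\}$ are disjoint subsets of $P:=N_G(x)\setminus\{y,z\}$ joined completely to $C_x$. I would then also clone $x$ with $z$ to obtain a second $k$-Ore graph $H_z$ with $H_z-\tilde x'$ a gadget and core $S_z:=N_{H_z}[x]\ni y$ satisfying $N_G(x)=\{z\}\cup(S_z\setminus\{x,\tilde x'\})$, so $S$ and $S_z$ agree on $\{x\}\cup P$; the edges missing from the near-cliques $G[S\setminus\{\tilde x\}]$ and $G[S_z\setminus\{\tilde x'\}]$ are exactly the replaced core edges of $H$ and $H_z$. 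Since a near-vertex is not a structure-vertex, $z$ is not in a $K_{k-3}$ and not a key vertex of a sub-gadget, which bounds the number of replaced core edges incident to $z$ in $H$ (and likewise for $y$ in $H_z$); this should force one of $G[S\setminus\{\tilde x\}]$, $G[S_z\setminus\{\tilde x'\}]$ to contain a $K_{k-3}$ through $z$ respectively $y$, the desired contradiction. When the count falls a couple of vertices short I would instead add the missing core edges incident to $z$ (or $y$) to get an $i$-edge-addition with $i\leq\frac{k-4}{2}$, contradicting Lemma \ref{lem:EdgeAddition}, or apply Corollary \ref{cor:NoDiamond} to the emerald inside the cluster of $x$ in $H$. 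The residual case $H=G_{y\to x}$ is handled as in Corollary \ref{cor:CloneName}: then $x$ lies in a $K_{k-3}$ subgraph $Q\subseteq\{x\}\cup(N_G(x)\setminus\{y\})$ with $y\notin Q$, and repeating with $z$ in place of $y$ and intersecting the two cliques again forces $y$ or $z$ into a $K_{k-3}$.

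The step I expect to be hardest is exactly this final one: upgrading the cluster/clique bookkeeping to a genuine $K_{k-3}$ through a near-vertex. The cluster cliques $C_x\cup\{y\}$, $C_x\cup\{z\}$ only have size $|C_x|+1$, which need not reach $k-3$, so the argument must genuinely use that the gadget core $S$ is a $K_{k-1}$ minus its (few) replaced edges, together with the cap on replaced edges touching $y$ and $z$; organising the case split according to whether $yz\in E(G)$ and to the replaced-edge counts, while keeping every edge-addition below size $\frac{k-4}{2}$, is where the bulk of the work lies.
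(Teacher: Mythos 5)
The preliminary steps of your outline are sound: $|C_y|,|C_z|<|C_x|$ does follow from Corollary \ref{cor:CloneName} applied both ways, and Lemma \ref{lem:Clone} does apply to $G_{y\rightarrow x}$ with the hypotheses you check. But the heart of the argument is missing, and I do not think it can be completed along the route you chose. Cloning $x$ with $y$ (or with $z$) can only re-certify that $x$ is a key vertex of a gadget or lies in a $K_{k-3}$ --- information we already have, since $x$ is a structure-vertex. To get a contradiction you must then force $y$ or $z$ to be a structure-vertex, and this is exactly the step you leave open. The tools you invoke do not deliver it: the base clique $S=N_H[x]$ of the gadget is a $K_k$ minus its replaced edges, but nothing bounds the number of replaced edges by a constant (each replacement costs only at least $k-1$ extra vertices, and $H$ may be arbitrarily large), and in particular ``$z$ is not a structure-vertex'' gives no cap on the number of replaced edges incident to $z$ --- the implication you need runs in the opposite direction. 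Hence neither the claimed $K_{k-3}$ through $z$ (or $y$) nor the fallback edge-addition is available: the set of missing edges at $z$ need not have size at most $\frac{k-4}{2}$, and adding only those edges is not shown to create a smaller $k$-critical subgraph, so Lemma \ref{lem:EdgeAddition} need not apply; likewise the cluster of $x$ inside $H$ need not induce an emerald, so the appeal to Corollary \ref{cor:NoDiamond} there is unsupported.

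The paper's proof is short and goes the other way around, and the hypothesis of two near-neighbours in distinct clusters is used precisely to make that possible. First, if $yz\in E(G)$, then Corollary \ref{cor:CloneName} applied to the edge $yz$ makes one of $y,z$ a structure-vertex, a contradiction; so $yz\notin E(G)$. Now clone in the ``forbidden'' direction: consider $G_{x\rightarrow z}$, i.e.\ replace the structure-vertex $x$ by a twin of the near-vertex $z$. Condition (3) of Lemma \ref{lem:Clone} fails here since $|C_z|<|C_x|$, but its only role is to guarantee that the resulting $k$-critical subgraph is smaller than $G$, and that is supplied for free by the second near-vertex: because $yz\notin E(G)$, the vertex $y$ has degree $k-2$ in $G_{x\rightarrow z}$, so $y$ lies in no $k$-critical subgraph $H\subseteq G_{x\rightarrow z}$ (which exists by Lemma \ref{lem:NoColorClone}), whence $|V(H)|<|V(G)|$. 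The conclusion of Corollary \ref{cor:CloneName} then makes $z$ a key vertex of a gadget or a member of a $K_{k-3}$, i.e.\ a structure-vertex, contradicting that $z$ is a near-vertex. This reversal of the cloning direction is the idea your proposal lacks, and without it the structural bookkeeping you set up does not close.
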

\begin{proof}
Let $G$ be minimal counterexample to Theorem \ref{thm:Main1} and suppose that $x$ is a structure-vertex with two near-vertex neighbors $y$ and $z$ such that $C_y\neq C_z$.
If $yz\in E(G)$, then Corollary \ref{cor:CloneName} implies that either $y$ or $z$ is a structure-vertex, which is a contradiction.
Therefore we conclude that $yz\notin E(G)$ and consider $G_{x\rightarrow z}$.
By Lemma \ref{lem:NoColorClone} there is a $k$-critical subgraph $H\subseteq G_{x\rightarrow z}$, and $H$ cannot include the vertex $y$.
Therefore $|V(H)|<|V(G_{x\rightarrow z})|=|V(G)|$ and we know that $H$ is smaller than $G$.
This replaces the need for condition (3) of Lemma \ref{lem:Clone} and Corollary \ref{cor:CloneName} and so it follows that $z$ is a structure-vertex, which contradicts the fact that it is a near-vertex.
\renewcommand{\qedsymbol}{$\blacksquare$}
\end{proof}

\begin{lemma}\label{lem:AdjacentBound}
In a minimal counterexample $G$ to Theorem \ref{thm:Main1}, let $x$ and $y$ be adjacent vertices such that $\deg_G(x)=k-1$ and $N_G[x]$ is not a subset of $N_G[y]$. Then $\deg_G(y)\geq|N_G(x)\cap N_G(y)|+1+\frac{k-3}{2}$.
\end{lemma}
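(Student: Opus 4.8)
The plan is to fix $x$ and $y$ as in the statement, set $a:=|N_G(x)\cap N_G(y)|$, and suppose for contradiction that $\deg_G(y)\le a+\frac{k-3}{2}$, i.e.\ $\deg_G(y)$ is strictly less than $a+1+\frac{k-3}{2}$. The key player is the common neighborhood $N:=N_G(x)\cap N_G(y)$, which has size $a$, together with the $k-1-a$ neighbors of $x$ outside $N\cup\{y\}$ (note $y\in N_G(x)$ but $y\notin N$, and since $N_G[x]\not\subseteq N_G[y]$ there is at least one neighbor of $x$ not adjacent to $y$, so $a\le k-3$). The natural object to build is a set $S$ of non-edges that, when added to $G$, forces a small $k$-critical subgraph; I expect $S$ to consist of the missing edges from $y$ into $N_G(x)$, so that after adding $S$ the set $N_G[x]$ becomes a $K_k$-minus-something structure, or more precisely so that $x$ lies in a large clique-like gadget. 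If the number of such missing edges is at most $\frac{k-4}{2}$, this is an $i$-edge-addition for some $i\le\frac{k-4}{2}$, contradicting Lemma~\ref{lem:EdgeAddition}; alternatively one gets a $1$-edge-addition contradicting Lemma~\ref{lem:No1EA}.

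The step I would carry out first is the bookkeeping: $x$ has $k-1$ neighbors, of which $a$ lie in $N$, one is $y$, and the remaining $k-2-a$ lie in a set $M$ of vertices adjacent to $x$ but (possibly) not to $y$. The vertex $y$ is adjacent to all $a$ vertices of $N$ and to $x$, using up $a+1$ of its degree; since $\deg_G(y)\le a+\frac{k-3}{2}$, the vertex $y$ has at most $\frac{k-3}{2}-1=\frac{k-5}{2}$ further neighbors. In particular $y$ is non-adjacent to at least $(k-2-a)-\frac{k-5}{2}$ vertices of $M$, but more usefully: consider adding to $G$ all edges from $y$ to the vertices of $M$ to which it is not already adjacent. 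The number of such edges is at most $|M|=k-2-a\le k-2$; this is too many in general, so the real work is to be more economical — add only edges from $y$ to $N_G(x)\setminus N_G[y]$ and argue via a partial-coloring/collapsibility argument (in the spirit of Lemma~\ref{lem:CollapseToEdgeAddition} and Corollary~\ref{cor:CloneName}) that in fact only $O(1)$ of them are needed, or that $x$ ends up in a $K_{k-3}$ or a gadget, forcing structure that a minimal counterexample cannot have.

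Concretely, the cleaner route is probably via cloning. Since $\deg_G(x)=k-1$, consider $G_{y\to x}$, the clone of $x$ with $y$. One checks the hypotheses of Lemma~\ref{lem:Clone}/Corollary~\ref{cor:CloneName} are (close to) satisfied — $x$ is in some cluster $C_x$, and $\deg_G(y)\le a+\frac{k-3}{2}\le (k-3)+\frac{k-3}{2}$, which one must compare against $k-2+|C_x|$; if $|C_x|$ is large the hypothesis holds outright, and if $|C_x|$ is small one uses the non-edge $xy$ together with $N_G[x]\not\subseteq N_G[y]$ to still produce a proper $k$-critical subgraph $H\subseteq G_{y\to x}$ that is smaller than $G$. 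Then Corollary~\ref{cor:CloneName} says $x$ is a key vertex of a gadget of $G$ or lies in a $K_{k-3}$ subgraph of $G$. Either way, the vertices of that clique/gadget near $x$ have degree exactly $k-1$ and are mutually adjacent, and combining this clique with $y$ and the common neighborhood $N$ produces either a diamond/emerald (contradicting Corollary~\ref{cor:NoDiamond}) or a short edge-addition (contradicting Lemma~\ref{lem:EdgeAddition}), depending on the count $a$ versus $\frac{k-3}{2}$. The inequality $\deg_G(y)\ge a+1+\frac{k-3}{2}$ is exactly the threshold at which these forbidden configurations can no longer be avoided.

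The main obstacle I anticipate is handling the case where the cluster $C_x$ is small and $y$ itself has degree $k-1$ and sits in a cluster of comparable or larger size, so that $G_{y\to x}$ is not obviously smaller than $G$ and Lemma~\ref{lem:Clone} does not directly apply; there one must instead argue directly with a partial $(k-1)$-coloring of $G-x$ (which exists by $k$-criticality), track which colors the $a$ common neighbors and the vertex $y$ receive, and use a recoloring/Kempe-type swap to show that unless $y$ has the claimed degree one can extend the coloring to $x$ — the counting $1+a+\frac{k-3}{2}>\deg_G(y)$ is precisely what leaves enough free colors to perform the swap. Getting that recoloring argument tight, and making sure it meshes with the $i\le\frac{k-4}{2}$ ceiling in Lemma~\ref{lem:EdgeAddition}, is where the care is needed.
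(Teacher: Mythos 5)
Your proposal does not close the argument; it circles around the right ingredients (Lemma \ref{lem:EdgeAddition}, a recoloring of $G-x$) but never identifies the one concrete construction that makes the lemma a three-line consequence of the edge-addition machinery. The missing idea is to fix a vertex $w\in N_G[x]-N_G[y]$ (which exists by hypothesis, and $w\neq x$ since $x\in N_G[y]$) and to add the star $S=\{wu \mid u\in N_G[y]-N_G[x]\}$ \emph{to $G-x$}, not edges out of $y$ into $N_G(x)$. In any proper $(k-1)$-coloring $\phi$ of $G-x$ the $k-1$ neighbors of $x$ receive distinct colors; if no vertex of $N_G[y]-N_G[x]$ had color $\phi(w)$, one could recolor $y$ with $\phi(w)$ (its neighbors inside $N_G[x]$ are rainbow-colored and avoid $\phi(w)$, and $x$ is deleted), after which $N_G(x)$ uses only $k-2$ colors and $\phi$ extends to $x$, contradicting $\chi(G)=k$. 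Hence $(G-x)+S$ is not $(k-1)$-colorable, so it contains a $k$-critical subgraph on a vertex set that excludes $x$ and is therefore a proper subset of $V(G)$; thus $S$ is an edge-addition, and Lemma \ref{lem:EdgeAddition} forces $|S|=\deg_G(y)-1-|N_G(x)\cap N_G(y)|\geq\frac{k-3}{2}$, which is exactly the stated bound. Working in $G-x$ is what guarantees $V(H)\subsetneq V(G)$; your candidate sets (missing edges from $y$ into $N_G(x)$) are added to $G$ itself, so the resulting critical subgraph need not live on fewer vertices, and their cardinality $k-2-a$ has no useful relation to the claimed inequality.

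The alternative routes you sketch also do not substitute for this. The cloning route requires $\deg_G(y)\leq k-2+|C_x|$ together with the cluster condition, neither of which is available here, and even granting Corollary \ref{cor:CloneName} you would only learn qualitative structure (key vertex of a gadget, or a $K_{k-3}$), with no mechanism producing the quantitative dependence on $|N_G(x)\cap N_G(y)|$; your final step "combining this clique with $y$ and $N$ produces a diamond/emerald or a short edge-addition" is asserted, not argued. Likewise, a Kempe-type swap inside a single coloring of $G-x$ cannot work on its own: for one particular coloring a neighbor of $y$ outside $N_G[x]$ may well block the recoloring even when $\deg_G(y)$ is small; the statement one needs is that the blocking happens in \emph{every} coloring, and packaging that as the edge-addition $S$ above is precisely how the paper converts it into the degree bound.
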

\begin{proof}
Let $G$ be a minimal counterexample to Theorem \ref{thm:Main1} and let $x$ and $y$ be adjacent vertices such that $\deg_G(x)=k-1$ and $w\in N_G[x]-N_G[y]$.
In any proper $(k-1)$-coloring $\phi$ of $G-x$, the vertices of $N_G(x)$ all receive distinct colors.
Therefore, some vertex of $N_G[y]-N_G[x]$ must be in the same color class as $w$ and
adding the edge set $S=\{wu_i\mid u_i\in N_G[y]-N_G[x]\}$ to $G-x$ creates a $k$-critical subgraph.
Using Lemma \ref{lem:EdgeAddition} we get $|S|\geq\frac{k-3}{2}$, and this gives the desired bound on $\deg_G(y)$.
\renewcommand{\qedsymbol}{$\blacksquare$}
\end{proof}

\begin{lemma}\label{lem:KeyBound}
Let $G$ be a minimal counterexample to Theorem \ref{thm:Main1} and suppose that $x$ is a key vertex in a gadget of $G$ such that $\deg_G(x)=k-1$. Then $x$ has at least $\frac{k-3}{2}$ neighbors of degree at least $\frac{3(k-3)}{2}$.
\end{lemma}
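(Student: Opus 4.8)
The plan is to pin down the local structure at $x$, extract high‑degree neighbours from two sources, and control the exceptions using Lemma~\ref{lem:EdgeAddition} and Corollary~\ref{cor:NoDiamond}.

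First, the structure. Write the gadget as $H^\circ=H-\tilde x$ where $H$ is a $k$-Ore graph and $\tilde x$ lies in a cluster of size at least $2$ in $H$. Since $x$ is a key vertex of $H$, undoing Ore compositions (Proposition~\ref{prp:kOreSequence}) shows that $x$ sits in the base $K_k$ of $H$, whose vertex set I call $V_0$, and that $x$ is never an overlap vertex, so no replaced edge of $H$ is incident to $x$; hence $\deg_H(x)=k-1$ and $N_H(x)=V_0\setminus\{x\}$. As $\tilde x$ lies in the same cluster (its closed neighbourhood is also $V_0$), $\tilde x\in V_0$ and $\tilde x$ is unreplaced, so $\deg_{H^\circ}(x)=k-2$ with $N_{H^\circ}(x)=V_0\setminus\{x,\tilde x\}$. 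Since $H^\circ\subseteq G$ and $\deg_G(x)=k-1=\deg_{H^\circ}(x)+1$, we get $N_G(x)=(V_0\setminus\{x,\tilde x\})\cup\{y_0\}$ for a single further vertex $y_0$ not among the base‑clique vertices, and $G$ restricted to the $k-1$ base‑clique vertices present in the gadget is $K_{k-1}$ with some subset $E^\dagger$ of the replaced edges of $H$ removed. Thus $x$ is adjacent to the $k-2$ vertices of $V_0\setminus\{x,\tilde x\}$, which span an almost‑complete graph.

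Second, two sources of neighbours $v$ of $x$ with $\deg_G(v)\ge\frac{3(k-3)}{2}$ (call such $v$ \emph{good}). Source (i): if $pq$ is a replaced edge of $H$ with attached split graph $F$, then $p$ loses one base edge and gains $d_p\ge1$ new neighbours and $q$ gains $d_q\ge1$, with $d_p+d_q\ge k-1$ since $F$ is $k$-critical; hence $\deg_H(p)+\deg_H(q)\ge 2(k-2)+(k-1)=3k-5$, so the higher‑degree endpoint has degree at least $\lceil(3k-5)/2\rceil>\frac{3(k-3)}{2}$ and is good, and the non‑good endpoints of the replaced edges among $V_0\setminus\{x,\tilde x\}$ form an independent set. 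Source (ii): if $v\in V_0\setminus\{x,\tilde x\}$ has at most one base non‑neighbour in $G$ and $N_G[x]\not\subseteq N_G[v]$, then $|N_G(x)\cap N_G(v)|\ge(k-2)-2=k-4$ ($v$ misses at most its one base non‑neighbour and possibly $y_0$ among $N_G(x)\setminus\{v\}$), so Lemma~\ref{lem:AdjacentBound} gives $\deg_G(v)\ge(k-4)+1+\frac{k-3}{2}=\frac{3(k-3)}{2}$, i.e.\ $v$ is good; and $N_G[x]\not\subseteq N_G[v]$ holds automatically whenever $v\not\sim y_0$ (since $y_0\in N_G[x]$) or $v$ has any base non‑neighbour other than $x$.

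Third, the count. The graph $G[(V_0\setminus\{\tilde x\})\cup\{y_0\}]$ has exactly $k$ vertices, and adding its missing edges — the edges of $E^\dagger$ together with the non‑edges from $y_0$ to $V_0\setminus\{x,\tilde x\}$ — yields $K_k$, a $k$-critical graph on fewer than $|V(G)|$ vertices. Since $G$ contains no $K_k$ and, by Lemma~\ref{lem:EdgeAddition}, no $i$-edge-addition for $1\le i\le\frac{k-4}{2}$, the number of such missing edges is at least $\frac{k-3}{2}$. In the principal case $E^\dagger=\emptyset$ this forces $|\{v\in V_0\setminus\{x,\tilde x\}:v\sim y_0\}|\le(k-2)-\frac{k-3}{2}=\frac{k-1}{2}$, and every other vertex of $V_0\setminus\{x,\tilde x\}$ has $y_0$ as a witness that $N_G[x]\not\subseteq N_G[v]$ and has no base non‑neighbour, hence is good by (ii); so at least $(k-2)-\frac{k-1}{2}=\frac{k-3}{2}$ neighbours of $x$ are good. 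When $E^\dagger\ne\emptyset$ I would combine (i) — the good endpoints of the missing base edges are themselves neighbours of $x$ — with the same $K_k$-completion bound and with Corollary~\ref{cor:NoDiamond}, which forbids any diamond or emerald of a subgraph of $G$ and so rules out the remaining configuration of a non‑good base‑neighbour of $x$ adjacent to all the other base‑neighbours, to $x$, and to $y_0$; this again yields $\frac{k-3}{2}$ good neighbours. Simultaneously controlling the missing base edges, their heavy endpoints, and the attachment of $y_0$ in this last case is the step I expect to be the main obstacle.
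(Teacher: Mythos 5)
Your set-up and your ``principal case'' run parallel to the paper's proof: the paper likewise views the gadget as a base $K_{k-1}$ with some edges replaced by split $k$-Ore graphs, observes that the key vertex $x$ is adjacent to all of the base plus exactly one further vertex $z$ (your $y_0$), uses Lemma \ref{lem:EdgeAddition} to force at least $\frac{k-3}{2}$ deficiencies, and makes the corresponding vertices heavy via Lemma \ref{lem:AdjacentBound} with $z$ as the witness. The genuine gap is the case $E^\dagger\neq\emptyset$, which you yourself flag as unresolved, and the tools you propose for it do not suffice. Your $K_k$-completion bound controls the number of missing \emph{edges}, $|E^\dagger|+\#\{v: v\not\sim y_0\}$, whereas the lemma needs $\frac{k-3}{2}$ good \emph{vertices}, and many missing base edges can concentrate on few vertices. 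Concretely, if $E^\dagger$ is a $K_{2,r}$ with every base vertex adjacent to $y_0$ and the split graphs arranged so that the two ``centres'' absorb almost all of the split degree, then source (i) certifies only the two centres, source (ii) says nothing about the $r$ vertices on the other side (they have two base non-neighbours), and neither the completion count nor Corollary \ref{cor:NoDiamond} bounds $r$ (a vertex of moderate degree adjacent to all of the base, to $x$ and to $y_0$ creates no diamond or emerald, so the configuration you hope to exclude is not excluded). Your guaranteed good set can thus be far smaller than $\frac{k-3}{2}$.

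What closes this case in the paper is a per-endpoint strengthening of your source (i): if $u,b$ are base vertices nonadjacent in $G$, then $ub$ is a replaced edge, the attached split $k$-Ore graph (with the split vertex separated into $u$ and $b$) is a subgraph of $G$, every proper $(k-1)$-colouring of it gives $u$ and $b$ distinct colours, and hence the edges from $u$ to $b$'s split-side neighbours form an edge-addition; Lemma \ref{lem:EdgeAddition} then forces $b$ to have at least $\frac{k-3}{2}$ split-side neighbours, so \emph{every} base vertex incident to a missing base edge is heavy, not just one endpoint per edge as your degree-sum argument in $H$ gives. The paper also keeps its deficiency count per vertex rather than per edge: it shows the star $\{zb : b \text{ a base vertex},\ b\not\sim z\}$ is itself an edge-addition (re-creating $H$ with $z$ in the deleted vertex's place, after first ruling out $V(G)=V(H^\circ)\cup\{z\}$), so at least $\frac{k-3}{2}$ base vertices miss the edge to $z$, and each of these is heavy by Lemma \ref{lem:AdjacentBound} if it sees the whole base and by the split-side argument otherwise. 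You need some version of both ingredients; without them the $E^\dagger\neq\emptyset$ case does not go through. (Minor points: your source (i) bounds degrees in $H$ while the lemma is about degrees in $G$ -- there is just enough slack since at most one edge to the deleted vertex is lost -- and your assumption that the deleted vertex lies in $x$'s cluster is stronger than the definition of a gadget provides, though the paper's proof makes a comparable implicit assumption.)
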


\begin{proof}  
Let $G$ be a minimal counterexample to Theorem \ref{thm:Main1} and let $x$ be a vertex of degree $k-1$ which is a key vertex of a gadget $H^\circ$ of $G$.
Let $H$ be the $k$-Ore graph where $H^\circ=H-w$.  If $H$ is an Ore composition of two graphs $H_1$ and $H_2$ with overlap vertices $\{a,b\}$, then $w\notin\{a,b\}$ and $\deg_H(w)=k-1$ by the definition of gadget.  Further, we must have $w\in V(H_1)$ because otherwise $\{ab\}$ is a 1-edge-addition in $G$ which contradicts Lemma \ref{lem:No1EA}.  Therefore if $H'_2$ is the split-side of the composition after separating the split vertex into $a$ and $b$, then $H'_2\subseteq G$.

Proposition \ref{prp:kOreSequence} gives a sequence of $k$-Ore graphs such that $H$ can be viewed as a $K_k$ graph with some edges replaced by suitable split $k$-Ore graphs.  The same sequence of Ore compositions lets us view $H^\circ$ as a $K_{k-1}$ graph $H'$ with some edges replaced by the same split $k$-Ore graphs.  Because each step in the sequence is the edge-side of the subsequent Ore composition, $V(H')\subseteq V(G)$.
The key vertex $x$ is not an overlap vertex for any Ore composition, so $xu\in E(G)$ for each $u\in V(H')-\{x\}$.  Therefore $x$ has one neighbor $z\in V(G)-V(H^\circ)$.
We partition the vertices of $H'$ into two sets $A:=\{u\in V(H')\mid uz\in E(G)\}$ and $B:=V(H')-A$. 
Note that in any proper $(k-1)$-coloring of $H^\circ$, each vertex of $H'$ gets a distinct color.

First, we show that $V(G)=V(H^\circ)\cup\{z\}$ is not possible. Suppose, for sake of contradiction that $V(G)=V(H^\circ)\cup\{z\}$.
If $H=K_k$, then this implies that $G$ is also $K_k$, which is a contradiction.  Therefore $H$ is an Ore composition of two $k$-Ore graphs $H_1$ and $H_2$ with overlap vertices $\{a,b\}$.  
Because $\rho(G)>k(k-3)-2(k-1)$ by hypothesis and $\rho(H)\leq k(k-3)-2\delta+\varepsilon$ by Theorem \ref{thm:Main2}, it follows that $|E(G)|=|E(H)|$ and therefore $\deg_G(z)=k-1$.
By Lemma \ref{lem:No2Cut}, $\{a,b\}$ is not a cutset, so there exists some $zv$ with $v\in V(H_2)$.

Let $\phi$ be a proper $(k-1)$-coloring of $H^\circ$.  If $\phi(v)\notin\{\phi(a),\phi(b)\}$, then it is possible to relabel the colors on split-side vertices only so that $\phi(v)=\phi(x)$.  But this updated coloring would then extend to $z$, as two of $z$'s neighbors share a color.  Therefore, where $H$ is an Ore composition of two $k$-Ore graphs $H_1$ and $H_2$ with overlap vertices $\{a,b\}$, any neighbor $v$ of $z$ with $v\in V(H_2)$ is colored the same as either $a$ or $b$ by any proper $(k-1)$-coloring of $H^\circ$.  This implies that $\{za, zb, ab\}$ is a 3-edge-addition which contradicts Lemma \ref{lem:EdgeAddition} because $k\geq 10$.

Therefore $V(H^\circ)\cup\{z\}$ is a proper subset of $V(G)$.  But if follows from this that $\{zb\mid b\in B\}$ is a $|B|$-edge-addition in $G$.  By Lemma \ref{lem:EdgeAddition}, we have $|B|\geq\frac{k-3}{2}$.  By Lemma \ref{lem:AdjacentBound}, 
$\deg_G(b)\geq|N_G(x)\cap N_G(b)|+1+\frac{k-3}{2}$ for each $b\in B$. If $b$ is adjacent to each vertex in $V(H')$, then $|N_G(x)\cap N_G(b)|=k-3$ and we get one more than the desired bound. For any $u\in V(H')$ that is not in $N_G(b)$, $H$ is an Ore composition of two $k$-Ore graphs $H_1$ and $H_2$ with overlap vertices $\{u,b\}$.  Let $H_2'$ be the split side of the composition after separating the split vertex into $u$ and $b$; note that $H_2'\subseteq G$.  In any proper $(k-1)$-coloring of $H_2'$, different colors are given to $u$ and $b$ and thus $\{uv\mid v\in N_{H_2'}(b)\}$ is a $|N_{H_2'}(b)|$-edge-addition in $G$.  By Lemma \ref{lem:EdgeAddition}, it follows that $|N_{H_2'}(b)|\geq\frac{k-3}{2}$.  However, the vertices in $|N_{H_2'}(b)|$ may also include the $\frac{k-3}{2}$ vertices in $N_G(b)-N_G(x)$ counted by Lemma \ref{lem:AdjacentBound}.  Therefore, we conclude that 
$\deg_G(b)\geq\frac{3(k-3)}{2}$.
\renewcommand{\qedsymbol}{$\blacksquare$}
\end{proof}

\begin{lemma}\label{lem:Kk3Bound}
If $x$ is in a $K_{k-3}$ subgraph $D\subseteq G$, where $G$ is a minimal counterexample to Theorem \ref{thm:Main1} and $\deg_G(x)=k-1$, then $x$ has at least
$\frac{k-9}{6}$ neighbors of degree at least $\frac{3(k-3)}{2}-1$. Furthermore, if $x$ has a neighbor $y\in V(G)-V(D)$ which is in a different cluster, then $x$ has at least $\frac{k-7}{2}$ neighbors of degree at least $\frac{3(k-3)}{2}-1$.
\end{lemma}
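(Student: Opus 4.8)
I would prove this by pushing Lemma~\ref{lem:AdjacentBound} as far as possible. First I would replace $D$ by a maximal clique of $G$ containing it; then $\deg_G(x)=k-1$ forces $x$ to have at most three neighbours outside $D$, say $z_1,z_2,z_3$. The key point is that for any $u\in V(D)\setminus\{x\}$ with $N_G[x]\not\subseteq N_G[u]$, Lemma~\ref{lem:AdjacentBound} applied to $(x,u)$, combined with $|N_G(x)\cap N_G(u)|\ge|V(D)|-2\ge k-5$ (both $x$ and $u$ are adjacent to all of $V(D)\setminus\{x,u\}$), gives $\deg_G(u)\ge(k-5)+1+\frac{k-3}{2}=\frac{3(k-3)}{2}-1$. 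Since $u$ is automatically adjacent to $x$ and to all of $V(D)\setminus\{x,u\}$, we have $N_G[x]\subseteq N_G[u]$ precisely when $u$ is adjacent to each of $z_1,z_2,z_3$; call such a $u$ \emph{bad}. So the whole task reduces to bounding the number of bad vertices of $D$.

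The ``furthermore'' part then follows immediately. If $y\in N_G(x)\setminus V(D)$ has $\deg_G(y)=k-1$ and $C_y\ne C_x$, then $N_G[x]\ne N_G[y]$, and as both have size $k$ this forces $N_G[x]\not\subseteq N_G[y]$; so Lemma~\ref{lem:AdjacentBound} applied to $(x,y)$ gives $|N_G(x)\cap N_G(y)|\le\frac{k-1}{2}$. Since $V(D)\setminus\{x\}\subseteq N_G(x)$, at least $(k-4)-\frac{k-1}{2}=\frac{k-7}{2}$ vertices $u\in V(D)\setminus\{x\}$ are non-neighbours of $y$, and each such $u$ has $y\in N_G[x]\setminus N_G[u]$, hence $\deg_G(u)\ge\frac{3(k-3)}{2}-1$ by the first paragraph. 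These are the required neighbours of $x$.

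For the general bound, assume for contradiction that $x$ has fewer than $\frac{k-9}{6}$ neighbours of degree at least $\frac{3(k-3)}{2}-1$, so more than $\frac{5(k-3)}{6}$ of the vertices of $V(D)\setminus\{x\}$ are bad. If $|V(D)|=k-1$, then $D$ is a copy of $K_k-v$, hence a gadget of $G$ in which every vertex (in particular $x$) is a key vertex, so Lemma~\ref{lem:KeyBound} already yields $\frac{k-3}{2}\ge\frac{k-9}{6}$ suitable neighbours; hence I may assume $|V(D)|\le k-2$. A bad vertex of degree exactly $k-1$ has closed neighbourhood $N_G[x]$ and so lies in $C_x$; the bad vertices together with $C_x$ induce a clique in $N_G[x]$, which is $K_k$-free, so this clique has at most $k-2$ vertices, and moreover $|C_x|\le k-3$ (if $|C_x|=k-2$ then $N_G[x]\setminus C_x$ consists of two vertices which are the endpoints of a diamond $K_k-e$, contradicting Corollary~\ref{cor:NoDiamond}). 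A bad vertex $u$ of degree at least $k$ lies in no cluster, so Corollary~\ref{cor:CloneName} applies to $G_{u\to x}$ with its third hypothesis vacuous: the ``$K_{k-3}$ avoiding $u$'' outcome needs $\deg_G(u)=k-1$ and is impossible, so either $x$ is a key vertex of a gadget (and Lemma~\ref{lem:KeyBound} finishes) or $\deg_G(u)\ge k-1+|C_x|$. The same trichotomy handles each $z_i$: it is a large-degree neighbour, or it falls to the cloning argument, or it has degree $k-1$ in a cluster other than $C_x$ and triggers the ``furthermore'' case. Feeding the surviving lower bound $\deg_G(u)\ge k-1+|C_x|$ on the uncloneable bad vertices, together with the clique bound $|C_x|\le k-3$, back into the count of bad vertices contradicts the assumption that there are more than $\frac{5(k-3)}{6}$ of them.

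The delicate step is this last one, and I expect the main obstacle to be handling the two extreme regimes of $|C_x|$ at once: when $|C_x|$ is close to $k-3$ the clique $C_x$ is essentially all of $D$ and $x$ has only one or two neighbours outside $D$ whose degrees must be pinned down through cloning and the diamond/emerald exclusions of Corollary~\ref{cor:NoDiamond}; when $|C_x|$ is small there may be many bad vertices of intermediate degree on which cloning cannot be run. Reconciling these cases against the $K_k$-free and diamond-free constraints is exactly what costs a factor of roughly three and yields the constant $\frac{k-9}{6}$ rather than the $\frac{k-7}{2}$ of the ``furthermore'' situation.
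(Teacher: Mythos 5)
Your first paragraph and your proof of the ``furthermore'' statement are fine and essentially coincide with the paper's argument: every vertex $u\in V(D)\setminus\{x\}$ missing some neighbour of $x$ outside $D$ satisfies $|N_G(x)\cap N_G(u)|\ge k-5$, so Lemma~\ref{lem:AdjacentBound} gives $\deg_G(u)\ge\frac{3(k-3)}{2}-1$, and applying Lemma~\ref{lem:AdjacentBound} to the pair $(x,y)$ yields at least $\frac{k-7}{2}$ such vertices when $y$ is a degree-$(k-1)$ neighbour in a different cluster. The genuine gap is the main bound: you never actually bound the number of ``bad'' vertices, and the route you sketch cannot do it. When Corollary~\ref{cor:CloneName} does not apply to a bad vertex $u$ of degree at least $k$ (because its degree hypothesis $\deg_G(u)\le k-2+|C_x|$ fails), all you learn is $\deg_G(u)\ge k-1+|C_x|$; for small $|C_x|$ this is far below the threshold $\frac{3(k-3)}{2}-1$, and nothing in your toolkit (cluster sizes, $K_k$-freeness, Corollary~\ref{cor:NoDiamond}) limits how many bad vertices of such intermediate degree there are, so no contradiction with ``more than $\frac{5(k-3)}{6}$ bad vertices'' follows. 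In the opposite regime, when $|C_x|$ is close to $k-3$, the degree-$(k-1)$ bad vertices lying in $C_x$ can by themselves number up to $k-4>\frac{5(k-3)}{6}$, again with no contradiction. Your closing ``feeding the surviving lower bound back into the count'' is precisely the unproven step, as you acknowledge.

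The missing idea is that the bound comes from Lemma~\ref{lem:EdgeAddition}, not from cloning. Since $x$ and every vertex of $A:=\{u\in V(D):uz_i\in E(G)\text{ for all }i\}$ are adjacent to all of $z_1,z_2,z_3$, and $V(D)$ is a clique, the only non-edges of $G[N_G[x]]$ are among $\{z_1,z_2,z_3\}$ and between $B:=V(D)\setminus A$ and $\{z_1,z_2,z_3\}$; there are at most $3+3|B|$ of them, and adding them produces a $K_k$ on $N_G[x]\subsetneq V(G)$, i.e.\ a $(3+3|B|)$-edge-addition in $G$. Lemma~\ref{lem:EdgeAddition} therefore forces $3+3|B|>\frac{k-4}{2}$, i.e.\ $|B|\ge\frac{k-9}{6}$, and each $b\in B$ is a neighbour of $x$ of degree at least $\frac{3(k-3)}{2}-1$ by your first paragraph. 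So the constant $\frac{k-9}{6}$ reflects the three outside neighbours (three possible missing edges per vertex of $B$, plus the triangle on $z_1,z_2,z_3$), not a loss from reconciling cloning cases; the cluster, gadget and diamond machinery is not needed for this lemma.
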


\begin{proof}
Let $G$ be a minimal counterexample to Theorem \ref{thm:Main1} such that $x$ is a vertex of degree $k-1$ in a $K_{k-3}$ subgraph $D$. Let $z_1,z_2,z_3$ be the three neighbors of $x$ in $V(G)-V(D)$. 
We partition the vertices of $D$ into two sets $A:=\{u\in V(D)\mid uz_i\in E(G) \text{ for each } i\in\{1,2,3\}\}$, and $B:=V(D)-A$.
By Lemma \ref{lem:AdjacentBound}, each $b\in B$ has degree at least $(k-5)+1+\frac{k-3}{2}=\frac{3(k-3)}{2}-1$.
It remains to show that $B$ is a large enough set.

The edges $\{z_1z_2,z_1z_3,z_2z_3\}$ and $bz_i$ for each pair $b\in B$, $i\in\{1,2,3\}$ form a $(3+3|B|)$-edge-addition in $G$, so it follows from Lemma \ref{lem:EdgeAddition} that $|B|\geq\frac{k-9}{6}$.
Now suppose without loss of generality that $z_1=y$ is a vertex of degree $k-1$ that is in a different cluster than $x$.
Because there is at least one vertex in $B$, Lemma \ref{lem:AdjacentBound} implies that $\deg_G(z_1)\geq(|A|-1)+1+\frac{k-3}{2}$.
But $\deg_G(z_1)=k-1$, so it follows that $|A|\leq\frac{k+1}{2}$.
As $A\cup B= V(D)$, this implies that $|B|\geq\frac{k-7}{2}$.
\renewcommand{\qedsymbol}{$\blacksquare$}
\end{proof}

\section{Discharging}\label{sec:Discharging}

We start by analyzing the local structure of a minimal counterexample to Theorem \ref{thm:Main1}. Then we complete the discharging argument in two stages; in the first stage we send charge along edges according to established rules, and in the second stage we average charge across the graph.
We define a charge function $w:V(G)\rightarrow\mathbb{R}$ so that for all $v\in V(G)$
\[w(v):=(k-2)(k+1)+\varepsilon-\deg_G(v)(k-1).\]
Note that the total initial charge across $G$ is $\rho(G)+\delta T(G)$, and that the charge of a vertex $x$ with degree $d$ is $w(x)=(k-d)(k-1)-2+\varepsilon$.

We now define the four sets we need to address in the second stage of discharging. 
\[L:=\{v\in V(G) \mid v \text{ is a lone-vertex in a cluster of size 1}\},\]
\[M:=\{v\in V(G) \mid v \text{ is a lone-vertex in a cluster of size 2}\},\]
\[P:=\{v\in V(G)\mid\deg_G(v)=k\},\]
\[Q:=\{v\in V(G)\mid\deg_G(v)=k+1\}.\]
Let $R$ be the set $V(G)-(L\cup M \cup P \cup Q)$ which contains the remaining vertices of $G$.

\textbf{Discharging Rule \#1 (R1)} Every vertex of degree at least $k+2$ reserves charge of $-2+\varepsilon$ and sends the remaining charge equally to all neighbors. 

\textbf{Discharging Rule \#2 (R2):} Every structure-vertex sends total charge $-(k-1)$ spread equally among all neighbors that are near-vertices.

For each vertex $v$, define $w'(v)$ to be the charge after applying (R1) and (R2) to $G$.
Note that a vertex of degree $d$ which follows (R1) sends out charge $(\frac{k}{d}-1)(k-1)$ to each of its neighbors. Also note that if a structure-vertex $x$ sends charge to a near-vertex $y$, then $|C_x|>|C_y|$. 

\begin{lemma}\label{lem:Discharge1}
Apply (R1) and (R2) to a minimal counterexample to Theorem \ref{thm:Main1} $G$ with charge function $w$ as above. For every vertex $v\in V(G)-(L\cup M\cup P\cup Q)$, the new charge $w'(v)$ is at most $-2+\varepsilon$.
\end{lemma}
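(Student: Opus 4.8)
The plan is to partition the vertices $v \in R = V(G)-(L\cup M\cup P\cup Q)$ according to their degree and their structural role (structure-vertex, near-vertex, lone-vertex, or high-degree vertex), and bound $w'(v)$ in each case. Recall that the initial charge of a degree-$d$ vertex is $w(v)=(k-d)(k-1)-2+\varepsilon$, so vertices of degree at least $k+2$ already have very negative charge even before they send anything out, while the delicate cases are the low-degree vertices of degree $k-1$ that are \emph{not} lone-vertices in a small cluster. First I would dispose of the easy cases: a vertex $v$ of degree $d\ge k+2$ retains exactly $-2+\varepsilon$ by (R1) and may only gain from (R2), so I must check that it receives nothing under (R2) --- indeed a vertex receiving charge under (R2) is a near-vertex, hence has degree $k-1$, so no vertex of degree $\ge k+2$ is a recipient; thus $w'(v)=-2+\varepsilon$. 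For $v$ of degree $d$ with $k-1 \le d$ but $d \notin \{k,k+1\}$ and $v \notin L\cup M$, I split further.

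Next I would handle the degree-$(k-1)$ vertices in $R$, which by definition of $L,M$ are either (i) structure-vertices, (ii) near-vertices, or (iii) lone-vertices in a cluster of size at least $3$. In case (iii), a lone-vertex $v$ in a cluster of size $s\ge 3$ has all its neighbors of degree $k-1$ inside $C_v$ except the $k-1-(s-1)=k-s$ neighbors outside; $v$ sends nothing (it is not a structure-vertex, so (R2) does not apply to it as a sender, and (R1) does not apply since $\deg_G(v)=k-1<k+2$) and receives only from neighbors of degree $\ge k+2$ via (R1) or from structure-vertices via (R2) --- but $v$ is not a near-vertex, so (R2) sends it nothing, and I must bound the (R1) contributions. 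Here is where Lemma~\ref{lem:KeyBound} and Lemma~\ref{lem:Kk3Bound} together with the cluster-size bound $|C_v|\le k-4$ (from the definition of lone-vertex) come in: the neighbors of $v$ outside $C_v$ that have large degree are controlled, so the total charge $v$ receives via (R1) is small enough that $w'(v)\le -2+\varepsilon$. In case (ii), a near-vertex $v$ in a cluster $C_v$ receives charge $-(k-1)$ distributed among the near-vertex neighbors of its structure-vertex neighbors; I would use that $v$ is adjacent to at least one structure-vertex $y$ (Corollary~\ref{cor:CloneName}) with $|C_y|>|C_v|$, count the near-vertices sharing each such structure-vertex, and show the net effect keeps $w'(v)\le -2+\varepsilon$ --- in fact a near-vertex should \emph{gain}, but not too much, because it starts at $w(v)=k-1-2+\varepsilon$ and must lose at least $k-1$ in net. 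Actually, I expect the worst configuration is a near-vertex receiving from exactly one structure-vertex that has many near-vertex neighbors, so each near-vertex gets a small positive amount; the bound follows by a careful count of how (R2) charge is split.

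In case (i), a structure-vertex $v$ of degree $k-1$ sends out $-(k-1)$ total under (R2) and receives only from high-degree neighbors under (R1). Its starting charge is $(k-1)-2+\varepsilon$, so after (R2) it is at $-2+\varepsilon$ minus whatever (R2) overshoots plus whatever (R1) adds; I must show the (R1) gain does not push it above $-2+\varepsilon$. This is exactly where Lemmas~\ref{lem:KeyBound} and~\ref{lem:Kk3Bound} are essential: they say $v$ (whether a key vertex of a gadget or in a $K_{k-3}$) has many neighbors of \emph{large} degree ($\ge \tfrac{3(k-3)}{2}$ or $\tfrac{3(k-3)}{2}-1$), but these large-degree neighbors send only a modest amount each under (R1) --- a degree-$d$ vertex sends $(\tfrac{k}{d}-1)(k-1)$ per neighbor, which decreases as $d$ grows --- and the remaining neighbors of $v$ have degree $k-1$ and send nothing at all. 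Balancing the $-(k-1)$ sent out against the bounded (R1) income is the crux. The main obstacle I anticipate is precisely this structure-vertex (and lone-vertex-in-large-cluster) bookkeeping: one must verify that the number of high-degree neighbors guaranteed by Lemmas~\ref{lem:KeyBound}/\ref{lem:Kk3Bound} is large enough, and their degrees high enough, that the per-edge (R1) charge summed over \emph{all} $k-1$ neighbors stays below the slack $k-1$, and this is exactly the kind of inequality that forces the hypothesis $k\ge 33$. I would organize the final computation as a sequence of short sub-cases on the role of $v$, invoking (R1)'s explicit per-neighbor formula $(\tfrac{k}{d}-1)(k-1)$ and the degree lower bounds from Section~\ref{sec:Cloning}, and in each sub-case reduce to a single numerical inequality in $k$ that holds for $k\ge 33$.
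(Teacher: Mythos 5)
Your case decomposition (vertices of degree at least $k+2$, then degree-$(k-1)$ structure-, near-, and lone-vertices in clusters of size at least $3$) is the same as the paper's, but the argument you sketch aims at the wrong inequalities because you have the signs of both rules inverted. Under (R1) a vertex of degree $d\geq k+2$ sends the \emph{negative} quantity $\left(\frac{k}{d}-1\right)(k-1)$ along each edge, and under (R2) a structure-vertex gives away $-(k-1)$, so its own charge \emph{rises} by $k-1$ (to $2k-4+\varepsilon$) while its near-vertex neighbours' charges \emph{fall}. Hence for structure-vertices the task is not to show the (R1) income is ``modest'' but to show it is sufficiently negative: Lemmas~\ref{lem:KeyBound} and~\ref{lem:Kk3Bound} guarantee at least $\frac{k-3}{2}$ (respectively $\frac{k-9}{6}$, or $\frac{k-7}{2}$ when (R2) actually fires) neighbours of degree at least roughly $\frac{3(k-3)}{2}\geq\frac{4}{3}k$, each contributing at most $-\frac{k-1}{4}$, and this is precisely what forces $k\geq 33$. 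Likewise, for a lone-vertex in a cluster of size $r\geq3$ the relevant input is not those two lemmas (they concern structure-vertices) but Corollary~\ref{cor:CloneName}: every neighbour outside the cluster must have degree at least $k-1+r$, so it sends at most $\frac{(1-r)(k-1)}{k-1+r}$, and one checks the resulting expression at $r=3$ and $r=k-4$ by convexity. Your plan to bound the received charge from above, rather than from below in magnitude, proves the wrong direction in every one of these cases.

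The near-vertex case is where the proposal genuinely fails rather than merely pointing the wrong way. A near-vertex $v$ starts at $k-3+\varepsilon$ and is affected only by (R2), so it must receive negative charge of magnitude at least $k-1$; your anticipated conclusion that ``each near-vertex gets a small positive amount'' and that the bound then follows describes exactly the situation in which the lemma would be false. The paper's argument rests on the unnamed lemma following Corollary~\ref{cor:CloneName}: a structure-vertex cannot be adjacent to near-vertices from two distinct clusters. Therefore all near-vertex neighbours of the adjacent structure-cluster $C_u$ (of size $s$) lie in the single cluster $C_v$ (of size $r<s$), every vertex of $C_u$ sends $\frac{-(k-1)}{r}$ to $v$, and the total contribution $-\frac{s(k-1)}{r}<-(k-1)$ pulls $v$ below $-2+\varepsilon$. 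Without this ingredient, which never appears in your proposal, no ``careful count of how (R2) charge is split'' can produce a loss of at least $k-1$ at each near-vertex.
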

\begin{proof}
Let $G$ be a minimal counterexample to Theorem \ref{thm:Main1} with charge function $w:V(G)\rightarrow\mathbb{R}$ as above, and apply (R1) and (R2).
If $v$ is a vertex with $\deg_G(v)\geq k+2$, then by (R1) it follows that $w'(v)\leq-2+\varepsilon$.
The cases that we need to check are when $v$ has degree $k-1$ and is either a structure-vertex, near-vertex, or lone-vertex in a cluster of size at least 3.\renewcommand{\qedsymbol}{}\end{proof}

\begin{proof}[Case 1a] Suppose that $v$ is a structure-vertex that is a key vertex of a gadget of $G$.

By Lemma \ref{lem:KeyBound}, the vertex $v$ has at least $\frac{k-3}{2}$ neighbors of degree at least $\frac{3(k-3)}{2}$; we will call these \emph{high-degree} neighbors.
For $k\geq27$, high-degree neighbors have degree at least $\frac{4}{3}k$. Therefore $v$ receives charge of $\frac{-1}{4}(k-1)$ or less from each high-degree neighbor by (R1). 
The vertex $v$ possibly sends charge $-(k-1)$ by (R2) as well. Therefore it follows that
\[w'(v)\leq k-3+\varepsilon-\left(\frac{k-1}{4}\right)\left(\frac{k-3}{2}\right)+(k-1)=\frac{-1}{8}(k-1)(k-19)-2+\varepsilon.\]
Because $k\geq 19$, we have $w'(v)\leq -2+\varepsilon$ as desired.
\renewcommand{\qedsymbol}{}\end{proof}

\begin{proof}[Case 1b] Suppose that $v$ is a structure-vertex that is in a $K_{k-3}$ subgraph of $G$.

By Lemma \ref{lem:Kk3Bound}, the vertex $v$ has at least $\frac{k-9}{6}$ neighbors of degree at least $\frac{3(k-3)}{2}-1$; we will call these \emph{high-degree} neighbors.
For $k\geq 33$, high-degree neighbors have degree at least $\frac{4}{3}k$.
As long as $v$ is not affected by (R2) we have
\[w'(v)\leq k-3+\varepsilon-\left(\frac{k-1}{4}\right)\left(\frac{k-9}{6}\right)=\frac{-1}{24}(k^2-34k+33)-2+\varepsilon3\]
Because $k\geq 33$, we have $w'(v)\leq-2+\varepsilon$ as desired.

If $v$ is affected by (R2), then $v$ has a neighbor outside of the $K_{k-3}$ which is in a different cluster, and by Lemma \ref{lem:Kk3Bound} there are at least $\frac{k-7}{2}$ high-degree neighbors.
In this case we have
\[w'(v)\leq k-3+\varepsilon-\left(\frac{k-1}{4}\right)\left(\frac{k-7}{2}\right)+(k-1)=\frac{-1}{8}(k^2-24k+23)-2+\varepsilon.\]
Because $k\geq23$, we have $w'(v)\leq-2+\varepsilon$ as desired.
\renewcommand{\qedsymbol}{}\end{proof}

\begin{proof}[Case 2] Suppose that $v$ is a near-vertex.

Let $v$ be in a cluster $C_v$ of size $t$ and let $u$ be an adjacent structure-vertex in a cluster $C_u$ of size $s$. 
By (R2) each vertex of $C_v$, including $v$, receives a charge of $\frac{-(k-1)}{r}$ from each vertex of $C_u$. Because $s>r$, the final charge on $v$ is 
\[w'(v)\leq k-3+\varepsilon-\frac{s(k-1)}{r}<-2+\varepsilon.\]
\renewcommand{\qedsymbol}{}\end{proof}

\begin{proof}[Case 3] Suppose that $v$ is a lone-vertex in a cluster $C_v$ of size $r$, where $r\geq3$.

By definition of lone-vertex, $v$ does not have any neighbors $y$ in a cluster $C_y$ with $C_y\neq C_v$. 
Let $y_1,y_2,\ldots y_{k-r}$ be the neighbors of $v$ in $V(G)-C_v$. By Corollary \ref{cor:CloneName} no $y_i$ has degree less than $k-1+r$, as this would imply that $v$ is a structure-vertex.
Therefore, by (R1), each $y_i$ sends charge at most $\left(\frac{k}{k-1+r}-1\right)(k-1)$ to $v$.
It follows that the upper bound on $w'(v)$ is 
\[\hat{w}'(v)\leq k-3+\varepsilon+\left(\frac{1-r}{k-1+r}\right)(k-1)(k-r).\]
The second derivative of $\hat{w}'(v)$ with respect to $r$ is positive for all $k>1$, so we only need to check that $\hat{w}'(v):=-2+\varepsilon$ for $r=3$ and $r=k-4$.
For $r=3$ we have
\[\hat{w}'(v)= k-3+\varepsilon+\frac{-2(k-1)(k-3)}{k+2}=-2+\varepsilon+\frac{(k-1)(8-k)}{k+2}\]
and for $r=k-4$ we have
\[\hat{w}'(v)= k-3+\varepsilon+\frac{(5-k)(k-1)4}{2k-5}=-2+\varepsilon+\frac{(k-1)(15-2k)}{2k-5}.\]
Because $k\geq8$, we get $w'(v)\leq-2+\varepsilon$ as desired.
\renewcommand{\qedsymbol}{$\blacksquare$}
\end{proof}

Lemma \ref{lem:Discharge1}, specifically Case 1b, restricts our main result to $k\geq 33$. Although there is approximation in the proof of this case, using a computer algebra system one can check that the result only holds for $k\geq33$; we paid no penalty in strength of argument by using simplified calculations.
Now that we have verified the charge for vertices in $R$, we need to examine the charge on $L,M,P,Q$ to calculate the total charge.  This gives us a lower bound on the combined size of $L$ and $P$.

\begin{lemma}\label{lem:Discharge2}
In a minimal counterexample $G$ to Theorem \ref{thm:Main1}, let $L$ be the set of lone-vertices in clusters of size 1 and let $P$ be the set of vertices of degree $k$. Then $|L|+|P|>|V(G)|\left(1-\frac{\varepsilon}{2}\right)$.
\end{lemma}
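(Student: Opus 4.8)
The plan is to compute the total charge in $G$ in two different ways and compare.  On one hand, the total initial charge is exactly $\rho(G)+\delta T(G)$ by the definition of $w$, and since $G$ is a counterexample to Theorem \ref{thm:Main1}, we have $\rho(G)>k(k-3)-2(k-1)$; since $T(G)\geq 0$ this gives a lower bound on the total charge. On the other hand, $(R1)$ and $(R2)$ merely move charge around, so the total charge is unchanged after discharging; thus it equals $\sum_{v\in V(G)} w'(v)$. I would split this sum over the five sets $R=V(G)-(L\cup M\cup P\cup Q)$, $L$, $M$, $P$, $Q$.

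The key step is to bound $w'(v)$ from above on each set. For $v\in R$, Lemma \ref{lem:Discharge1} gives $w'(v)\leq -2+\varepsilon$. For $v\in M$ (a lone-vertex in a cluster of size $2$), the same kind of argument as Case 3 of Lemma \ref{lem:Discharge1} with $r=2$ should give $w'(v)\leq -2+\varepsilon$ as well, or at worst some explicit nonpositive-ish bound; I expect this to be handled either by a remark that the $r=2$ computation still works or, if it does not, that is precisely why $M$ (like $L$ and $P$) is exempted and gets only the trivial bound $w'(v)\leq w(v) + (\text{incoming charge})$. For $v\in L$, $v\in P$, $v\in Q$ I would use the crude bound: a vertex of degree $d$ can receive at most $(\frac{k}{d'}-1)(k-1)$ from each neighbor following $(R1)$ where $d'\geq d$... more simply, $(R1)$ and $(R2)$ only ever send \emph{nonnegative}... wait, actually they send negative charge, meaning a vertex only \emph{gains} charge, so $w'(v)$ could be large and positive for low-degree vertices. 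The point is to get an \emph{upper} bound on $w'(v)$ that is small for vertices not in $L\cup P$, and for $L$ and $P$ themselves just bound the charge each such vertex can accumulate. For $v\in Q$ (degree $k+1$), $v$ follows neither rule and only receives charge; but a degree-$(k+1)$ vertex's neighbors that send charge have degree $\geq k+2$ and send at most $(\frac{k}{k+2}-1)(k-1)<0$, so $w'(v)\leq w(v) = -k+\varepsilon - 2 + \dots$; in any case $w'(v)$ should be bounded above by something like $-2+\varepsilon$ or smaller, so $Q$ can be lumped with $R$.

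Assembling: let $n=|V(G)|$. We get
\[
k(k-3)-2(k-1) < \rho(G) \le \sum_{v\in V(G)} w'(v) \le (-2+\varepsilon)\bigl(n-|L|-|P|\bigr) + \sum_{v\in L\cup P} w'(v),
\]
and then bound $\sum_{v\in L\cup P} w'(v)$ crudely (each lone-vertex-in-a-$1$-cluster and each degree-$k$ vertex accumulates at most a bounded amount of charge — indeed at most $w(v)$ plus total incoming, which for these low-degree vertices is at most something like $(k-2)(k+1)+\varepsilon$ minus a positive quantity, so $\le (k-2)(k+1)+\varepsilon$). Rearranging the displayed inequality to isolate $|L|+|P|$ yields $|L|+|P| \geq \frac{(-2+\varepsilon)n - k(k-3)+2(k-1) - (\text{bounded terms})}{-2+\varepsilon}$; for this to simplify to $|L|+|P| > n(1-\frac{\varepsilon}{2})$ one uses that the lower-order (constant and $O(k^2)$) terms are dominated, i.e. effectively one shows $(2-\varepsilon)(|L|+|P|) > (2-\varepsilon)n - \varepsilon n$, which reduces to showing the accumulated charge on $L\cup P$ contributes favorably.

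The main obstacle I anticipate is getting the arithmetic to close \emph{exactly} at the threshold $1-\frac{\varepsilon}{2}$ rather than some weaker fraction: this requires a sharp enough bound on $w'(v)$ for $v\in M\cup Q$ (so that these really can be merged into the ``$\le -2+\varepsilon$'' bucket) and a sufficiently tight accounting of how much charge vertices in $L$ and $P$ can soak up. In particular one must verify that vertices of degree $k-1$ in $L$ receive charge only from neighbors whose degree is controlled (via Corollary \ref{cor:CloneName} / the lone-vertex classification), and that degree-$k$ vertices in $P$ likewise cannot accumulate more than roughly $(k-2)(k+1)$; these per-vertex bounds, multiplied by $|L|+|P|$, must be exactly absorbable into the $\frac{\varepsilon}{2}n$ slack. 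I would carry out the degree bookkeeping for $M$ and $Q$ first (to justify collapsing the sum to two groups), then do the final linear-inequality manipulation, treating the $O(k^2)$ additive constants as negligible against the $\Theta(n)$ main term since a counterexample certainly has $n$ large.
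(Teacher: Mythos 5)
There is a genuine gap: your plan to put every vertex outside $L\cup P$ into a ``$w'(v)\le -2+\varepsilon$'' bucket and then bound the charge that $L$ and $P$ ``soak up'' crudely cannot reach the conclusion. A vertex $v\in L$ has $w(v)=k-3+\varepsilon$ and under (R1)/(R2) it only \emph{receives} nonpositive charge, so the trivial bound is $w'(v)\le k-3+\varepsilon$; feeding that into your displayed inequality gives only $(2-\varepsilon)\bigl(n-|L|-|P|\bigr)\lesssim (k-3)|L|$, i.e.\ a bound of the shape $|L|\gtrsim n/k$, which is far weaker than $|L|+|P|>n\left(1-\frac{\varepsilon}{2}\right)$. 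To get a $\bigl(1-\frac{\varepsilon}{2}\bigr)$ fraction you must show that the $k-3$ of positive charge on each $L$ (and $M$) vertex is essentially cancelled, and that is the actual content of the paper's proof: every neighbour of an $L\cup M$ vertex outside its cluster has degree at least $k$ (Corollary \ref{cor:CloneName}), each edge to $R$ delivers at most $-\frac{2(k-1)}{k+2}$ by (R1), and--this is the missing idea--the number of ``wasted'' edges into $P\cup Q$, which deliver nothing, is controlled by the list-coloring/kernel bound of Section \ref{sec:Prelim} ($e(L,P\cup Q)\le 2|L|+2|P|+3|Q|$ and $e(M,Q)\le |M|+3|Q|$), with the surplus terms $2|P|$ and $3|Q|$ absorbed against the initial negative charges $-2|P|$ and $-(k+1)|Q|$. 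Your proposal never invokes this edge-counting or the kernel lemma, and without it the per-vertex bookkeeping cannot close.

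A second, related problem is your treatment of $M$. The local computation of Case 3 of Lemma \ref{lem:Discharge1} with $r=2$ does \emph{not} give $w'(v)\le -2+\varepsilon$: a lone-vertex in a cluster of size $2$ could have all $k-2$ outside neighbours of degree exactly $k+1$ (i.e.\ in $Q$), receive nothing, and keep charge $k-3+\varepsilon$. So $M$ cannot be merged into the ``$\le -2+\varepsilon$'' bucket by a local argument; and your fallback of exempting $M$ would only prove $|L|+|M|+|P|>n\left(1-\frac{\varepsilon}{2}\right)$, which is not the statement. The paper handles $M$ exactly by the same global mechanism: $e(M,Q)\le |M|+3|Q|$ lets the deficit on $M$ be charged to $Q$. (Your observation that $Q$ vertices satisfy $w'(v)\le -2+\varepsilon$ is correct, but in the correct accounting their full charge $-(k+1)$ is needed to absorb the $3|Q|$ surpluses, so even that lumping has to be done after, not instead of, the edge count.)
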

\begin{proof}
For each $x\in L\cup M$, every vertex in $N_G(x)\cap R$ has degree at least $k+2$, so a charge of $-\frac{2(k-1)}{k+2}$ or less is sent along each edge from $R$ to $L\cup M$. Let $e(L\cup M, R)$ denote the number of such edges.
The total charge on $G$ is bounded by
\begin{align}\label{eq:TotalCharge}
\sum_{v\in V(G)} w(v)=\sum_{v\in V(G)}w'(v)&\leq\varepsilon|V(G)|-2|R|+(k-3)(|L|+|M|)-2|P| \nonumber \\
&\indent -(k+1)|Q|-\frac{2(k-1)}{k+2}e(L\cup M,R).
\end{align}
Note that no vertex in $M$ is adjacent to any vertex in $P$ by Corollary \ref{cor:CloneName}.
Thus it follows that 
\begin{equation}\label{eq:LMR}
e(L\cup M, R)=(k-1)|L|-e(L,P\cup Q)+(k-2)|M|-e(M,Q).
\end{equation}
It is clear that $e(L,P\cup Q)\leq (k-1)|L|$, and Lemma \ref{lem:Kernel} shows that $e(L,P\cup Q)\leq 2|L|+2|P|+3|Q|$ (calculations are simpler when we increase the contribution of the independent set $L$).
Then it follows that
\begin{equation}\label{eq:LPQ}
e(L,P\cup Q)\leq\frac{k-4}{2(k-1)}(k-1)|L|+\left(1-\frac{k-4}{2(k-1)}\right)\left(2|L|+2|P|+3|Q|\right).
\end{equation}

Using Lemma \ref{lem:Kernel} we can also bound $e(M,Q)\leq |M|+3|Q|$ . 
With this, Equation \ref{eq:LMR}, and Equation \ref{eq:LPQ} we can rewrite Equation \ref{eq:TotalCharge} as
\begin{align}
\sum_{v\in V(G)}w(v)&\leq \varepsilon|V(G)|-2|R|+(k-3)(|L|+|M|)-2|P|-(k+1)|Q| \nonumber \\
&\indent -(k-3)|L|+2|P|-\frac{2(k^2-4k+3)}{k+2}|M|+\frac{9k}{k+2}|Q| \nonumber \\
&=\varepsilon |V(G)|-2|R|-\frac{k^2-7k+12}{k+2}|M|-\frac{k^2-6k-2}{k+2}|Q|.
\end{align}
As the coefficients of $|M|$ and $|Q|$ are both at most -2 for $k>8$, we have 
\begin{equation*}
\varepsilon|V(G)|-2(|V(G)|-|L|-|P|)\geq\sum_{v\in V(G)}w(v)=\rho(G)+\delta T(G)>0.
\end{equation*}
From this, it follows that $|L|+|P|>|V(G)|\left(1-\frac{\varepsilon}{2}\right)$.
\renewcommand{\qedsymbol}{$\blacksquare$}
\end{proof}

We are now prepared to prove Theorem \ref{thm:Main1}.

\begin{proof}[Proof of Theorem \ref{thm:Main1}]
We first get a bound on the set $L$.
It is clear that $2|E(G)|\geq k|P|+(k-1)|L|$, so by Lemma \ref{lem:Discharge2} it follows that
\begin{equation}\label{eq:EdgeBound}
2|E(G)|> k|V(G)|\left(1-\frac{\varepsilon}{2}\right)-|L|.
\end{equation}

By assumption $\rho(G)>0$, so $2|E(G)|<\left(k+\frac{\varepsilon-2}{k-1}\right)|V(G)|$ by the definition of $\varepsilon$-potential.
Combining this with Equation \ref{eq:EdgeBound}, we have
\begin{equation}
|L|>\frac{|V(G)|}{k-1}\left(2-\varepsilon-\frac{\varepsilon(k^2-k)}{2}\right). \nonumber
\end{equation}

Recall that $\text{mic}(G)$ is the maximum of $\sum_{v\in I}\deg_G(v)$ over all independent vertex subsets $I$, so $\text{mic}(G)\geq(k-1)|L|$.
Kierstead and Rabern (Theorem 2.4 in \cite{KiersteadRabern})
show that $2|E(G)|>(k-2)|V(G)|+\text{mic}(G)$.
Therefore we can improve Equation \ref{eq:EdgeBound} to
\begin{equation}
2|E(G)|>|V(G)|\left((k-2)+2-\varepsilon-\frac{\varepsilon(k^2-k)}{2}\right). \nonumber
\end{equation}

Again, because $\rho(G)>0$ the definition of $\varepsilon$-potential shows that

\[\left(k+\frac{\varepsilon-2}{k-1}\right)|V(G)|>|V(G)|\left((k-2)+2-\varepsilon-\frac{\varepsilon(k^2-k)}{2}\right)\]
and hence
\[\frac{\varepsilon-2}{k-1}>-\varepsilon-\frac{\varepsilon(k^2-k)}{2}.\]

This is equivalent to 
\[\frac{4}{k^3-2k^2+3k}<\varepsilon,\]
which is a contradiction to our choice of $\varepsilon$.
\renewcommand{\qedsymbol}{$\blacksquare$}
\end{proof}

\bibliographystyle{amsplain}
\bibliography{StructureBib}

\end{document}